\documentclass[11pt,a4paper,reqno]{amsart}
\fontsize{6.5pt}{6.5pt}\selectfont 
\makeatletter
\def\@evenhead{{\fontsize{6.5pt}{6.5pt}\selectfont \hfil \leftmark\hfil\thepage}}
\def\@oddhead{{\fontsize{6.5pt}{6.5pt}\selectfont\hfil\rightmark \hfil\thepage}}
\makeatother
\usepackage{amsfonts}
\usepackage{booktabs}
\usepackage{amsmath,amssymb,amsthm,amsxtra}
\usepackage{float}
\usepackage{amssymb}
\usepackage{mathabx}
\usepackage{bbm}
\usepackage[colorlinks, linkcolor=blue,anchorcolor=Periwinkle,
citecolor=Red,urlcolor=Emrald]{hyperref}
\usepackage[usenames,dvipsnames]{xcolor}
\usepackage{enumitem}
\setlength{\unitlength}{2.7pt}
\usepackage{geometry,array} \geometry{left=2.25cm,right=2.25cm}
\usepackage{graphicx}
\usepackage{subfigure}
\usepackage{bookmark}
\usepackage{tikz}\usetikzlibrary{matrix,intersections, calc, arrows.meta}
\usepackage{url}
\usepackage{dsfont}
\usepackage[colorinlistoftodos]{todonotes}
\usepackage{tablists} \restorelistitem
\usepackage{bm}

\usetikzlibrary{decorations.markings}
\tikzset{->-/.style={decoration={  markings,  mark=at position #1 with
    {\arrow{>}}},postaction={decorate}}}
\tikzset{-<-/.style={decoration={  markings,  mark=at position #1 with
    {\arrow{<}}},postaction={decorate}}}
\usepackage{extarrows}
\usepackage[all]{xy}
\usepackage{setspace}\setstretch{1.15}
\usepackage{thmtools}
\usepackage{thm-restate}
\usepackage{hyperref}
\usepackage{cleveref}
\usepackage{multirow}
\usepackage{ifthen}
\usepackage{tikz-3dplot}
\usepackage{comment}
\usepackage{mathrsfs}

\newcommand{\diag}{\operatorname{diag}}

\newcommand{\id}{\operatorname{id}}

\newcommand{\sign}{\operatorname{sign}}


\newcommand{\mfC}{\mathbf{C}}

\newcommand{\mfE}{\mathbf{E}}

\newcommand{\mfG}{\mathbf{G}}

\newcommand{\mfS}{\mathbf{S}}


\newcommand{\mfc}{\mathbf{c}}

\newcommand{\mfe}{\mathbf{e}}

\newcommand{\mfg}{\mathbf{g}}

\newcommand{\mfu}{\mathbf{u}}

\newcommand{\mfw}{\mathbf{w}}
\newcommand{\mfx}{\mathbf{x}}

\newcommand{\mcA}{\mathcal{A}}

\newcommand{\mcD}{\mathcal{D}}

\newcommand{\mcM}{\mathcal{M}}

\newcommand{\mcO}{\mathcal{O}}
\newcommand{\mcP}{\mathcal{P}}

\newcommand{\mcT}{\mathcal{T}}


\newcommand{\mbN}{\mathbb{N}}

\newcommand{\mbR}{\mathbb{R}}

\newcommand{\mbT}{\mathbb{T}}


\theoremstyle{plain}
\newtheorem{theorem}{Theorem}[section]

\newtheorem{lemma}[theorem]{Lemma}
\newtheorem{corollary}[theorem]{Corollary}
\newtheorem{proposition}[theorem]{Proposition}
\newtheorem{conjecture}[theorem]{Conjecture}
\theoremstyle{definition}
\newtheorem{definition}[theorem]{Definition}

\newtheorem{example}[theorem]{Example}

\newtheorem{remark}[theorem]{Remark}

\newtheorem{question}[theorem]{Question}

\numberwithin{equation}{section}
\newtheorem{definition-proposition}[theorem]{Definition-Proposition}

\begin{document}

\title {Sign-coherence and tropical sign pattern for rank 3 real cluster-cyclic exchange matrices}

\date{\today}
\author{Ryota Akagi}
\address{Graduate School of Mathematics\\ nagoya University\\Chikusa-ku\\ Nagoya\\464-0813\\ Japan.}
\email{ryota.akagi.e6@math.nagoya-u.ac.jp}

\author{Zhichao Chen}
\address{School of Mathematical Sciences\\ University of Science and Technology of China \\ Hefei, Anhui 230026, P. R. China. \& Graduate School of Mathematics\\ nagoya University\\Chikusa-ku\\ Nagoya\\464-0813\\ Japan.}
\email{czc98@mail.ustc.edu.cn}

\maketitle
\begin{abstract}
The sign-coherence about $c$-vectors was conjectured by Fomin-Zelevinsky and solved completely by Gross-Hacking-Keel-Kontsevich for integer skew-symmetrizable case. We prove this conjecture associated with $c$-vectors for rank 3 real cluster-cyclic skew-symmetrizable case. Simultaneously, we establish their self-contained recursion and monotonicity. Then, these $c$-vectors are proved to be roots of certain quadratic equations. Based on these results, we prove that the corresponding exchange graphs of $C$-pattern and $G$-pattern are $3$-regular trees.  We also study the structure of tropical signs and equip the dihedral group $\mcD_6$ with a cluster realization via certain mutations. \\\\
Keywords: Sign-coherence, $c$-vectors, tropical sign pattern, exchange graphs, dihedral group. \\
2020 Mathematics Subject Classification: 13F60, 05E10, 11D09. 
\end{abstract}
\tableofcontents
\section{Introduction}\label{intro}
Cluster algebras are a class of commutative algebras distinguished by their dynamic sets of generators and relations. They were originally introduced in \cite{FZ02, FZ03} as a tool to study the total positivity in Lie groups and the canonical bases in quantum groups. The main object is the \emph{seed}, which consists of cluster variables, coefficients and an integer skew-symmetrizable matrix. Its transformation is called a \emph{mutation}.  By gathering some special information of seeds, the integer $C$-matrices and $G$-matrices can be defined \cite{FZ07}. They played important roles in the study of cluster algebras \cite{DWZ10, Pla11, NZ12, GHKK18, CGY22, LMN23}.

In \cite{FZ07, NZ12}, the recursions (\Cref{defCG}) for $C$-, $G$-matrices were introduced. By considering this recursion, we can generalize their definition for any real entries. In this paper, we focus on $C$-, $G$-matrices with real entries.

A fundamental problem in (ordinary) cluster algebras is the \emph{sign-coherence} of $C$-pattern, which was given by \cite{FZ07}. It was proved in the skew-symmetric case by \cite{DWZ10, Pla11, Nag13} with the method of algebraic representation theory. In general, that is for the skew-symmetrizable case, this conjecture was proved by \cite{GHKK18} with the method of scattering diagrams. Note that the exchange matrices, $C$-matrices and $G$-matrices of (ordinary) cluster algebras are integer matrices. On the other hand, we focus on the real ones. Hence, a natural question arises that 
\begin{question}
	Is the $C$-pattern corresponding to a real exchange matrix sign-coherent?
\end{question}
However, it does not always hold in the general case. In \cite{AC25}, we classified all the real rank $2$ case and finite type case whose $C$-patterns ($G$-patterns) are sign-coherent. In particular, this finite type can be classified by the Coxeter diagrams. Hence, we can expect that this generalization has many good backgrounds.

In this paper, we aim to deal with the much more complicated and infinite real cases of rank $3$. There are a special class of exchange matrices called \emph{cluster-cyclic exchange matrices} and the others are said to be \emph{cluster-acyclic}, see \Cref{cc and ca}. In particular, the sufficient and necessary conditions for an real exchange matrix to be cluster-cyclic are given by \cite{Sev12, Aka24}. Then, we prove the sign-coherence for the real cluster-cyclic case of rank $3$ as follows.  
\begin{theorem}[{\Cref{thm: sign-coherency}}] \label{intro main1}
	Every $C$-pattern corresponding to a real cluster-cyclic exchange matrix is sign-coherent.
\end{theorem}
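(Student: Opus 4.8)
The plan is to prove sign-coherence by induction along the free mutation tree, using the cluster-cyclic hypothesis to turn the $C$-matrix mutation into a self-contained linear recursion, and then controlling signs through a coordinate-uniform dominance estimate coming from monotonicity.

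First I would fix notation for the mutation. Writing $c_1,c_2,c_3$ for the columns of the current $C$-matrix and $B=(b_{ij})$ for the current real skew-symmetrizable exchange matrix, the mutation $\mu_k$ acts---once $c_k$ is already known to be sign-coherent with common sign $\epsilon_k\in\{\pm1\}$---by
\begin{equation*}
c_k'=-c_k,\qquad c_j'=c_j+[\epsilon_k b_{kj}]_+\,c_k\quad(j\neq k),
\end{equation*}
together with the usual matrix mutation $B\mapsto\mu_k(B)$. The base seed has $C=I$, which is trivially sign-coherent, so sign-coherence is a statement about the forward orbit under these mutations, and the added vector $[\epsilon_k b_{kj}]_+\,c_k$ always carries the common sign $\epsilon_k$.

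Second, I would induct along the free mutation tree. Since each $\mu_k$ is an involution, it suffices to treat reduced words $\mu_{i_n}\cdots\mu_{i_1}$ with $i_{t+1}\neq i_t$ and to induct on the length $n$; that distinct reduced words yield distinct seeds---i.e.\ that the exchange graph is literally a $3$-regular tree---is a finer statement I would deduce afterwards from sign-coherence rather than assume here. The cluster-cyclic hypothesis enters through the shape of $B$: by definition (\Cref{cc and ca}) every matrix in the mutation class is again cyclic, so the combinatorial sign pattern of the $b_{kj}$ is preserved under all mutations. Once sign-coherence is known up to step $n$, the signs $\epsilon_k$ are determined, the displayed rule is a bona fide linear recursion with nonnegative coefficients $\lambda:=[\epsilon_k b_{kj}]_+\geq0$, and I may ask whether step $n+1$ preserves sign-coherence.

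The inductive step separates the mutated column from the two fixed ones. The column $c_k'=-c_k$ is sign-coherent because $c_k$ is. For $j\neq k$ we have $c_j'=c_j+\lambda\,c_k$; if $c_j$ shares the common sign $\epsilon_k$, or if $\lambda=0$, then $c_j'$ is immediately sign-coherent. The only dangerous configuration is $c_j$ of the opposite common sign together with $\lambda>0$: then $c_j'$ has entries $(c_j)_i+\lambda(c_k)_i$ in which the two summands compete coordinatewise, and coherence can fail unless one summand dominates uniformly across all three coordinates. Here I would invoke the monotonicity and quadratic structure promised in the abstract. Tracking a periodic pattern such as $\mu_1\mu_2\mu_3$ turns the recursion into a fixed transfer matrix whose characteristic polynomial is a quadratic with real hyperbolic roots; these roots control the ratios of the entries of the $c$-vectors, forcing the entries to grow monotonically and the ratios to converge, so that along each branch the term $\lambda c_k$ dominates $c_j$ coordinate-uniformly after the first few steps. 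This uniform dominance pins every coordinate of $c_j'$ to the sign $\epsilon_k$, closing the induction.

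The main obstacle is exactly this coordinate-uniform dominance: excluding sign cancellation in $c_j+\lambda c_k$ when the two vectors point in opposite directions. It is here that the cluster-cyclic hypothesis is indispensable---in the cluster-acyclic and finite-type cases the transfer matrix is no longer hyperbolic, the entry ratios need not converge, and sign-coherence can genuinely break for real exchange matrices, as recorded in \cite{AC25}. I therefore expect the bulk of the work to lie in establishing the self-contained recursion, the monotonicity of the entries, and the quadratic characterization of the $c$-vectors; sign-coherence then follows as a corollary of uniform dominance along each branch.
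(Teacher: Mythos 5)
Your overall frame---induction on reduced words, base case $C^{\emptyset}=I$, the mutated column $-{\bf c}_k$ being harmless, and the only danger being ${\bf c}_j'={\bf c}_j+\lambda{\bf c}_k$ when ${\bf c}_j$ and ${\bf c}_k$ carry opposite signs---matches the paper's setup, and you correctly sense that monotonicity must be established simultaneously. But the mechanism you propose for the dangerous case is a genuine gap. The paper closes the induction by carrying two extra statements through it: first, the tropical-sign recursion (Theorem~\ref{thm: recursion for tropical signs}), which says that at each vertex there is a \emph{unique} dangerous direction $s$ with $\varepsilon_s^{\bf w}\neq\varepsilon_k^{\bf w}$ and $\varepsilon_s^{\bf w}\mathrm{sign}(b_{ks}^{\bf w})=-1$, and prescribes exactly how all three signs flip; second, the quantitative inequality of Lemma~\ref{lem: inequalities}~(a),
\begin{equation*}
\varepsilon_s^{\bf w}\bigl((|b^{\bf w}_{sk}b_{ks}^{\bf w}|-2)\,{\bf c}_{k}^{\bf w}+|b^{\bf w}_{sk}|\,{\bf c}_{s}^{\bf w}\bigr)\;\geq\;{\bf 0},
\end{equation*}
which, combined with the bound $|b_{ij}^{\bf w}b_{ji}^{\bf w}|\geq 4$ valid at \emph{every} vertex of the mutation class (Proposition~\ref{prop: lowerbound of b}, the characterization of cluster-cyclicity), yields the coordinate-uniform dominance $\varepsilon_s^{\bf w}(2{\bf c}_k^{\bf w}+|b_{sk}^{\bf w}|{\bf c}_s^{\bf w})\geq{\bf 0}$ that pins the sign of the new column ${\bf c}_k^{{\bf w}[s]}={\bf c}_k^{\bf w}+|b_{sk}^{\bf w}|{\bf c}_s^{\bf w}$. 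Your proposal contains no analogue of either ingredient.

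Concretely, your transfer-matrix argument fails on three counts. (i) The mutation tree consists of \emph{all} reduced words, and the exchange matrices $B^{\bf w}$ change at every step, so no single transfer matrix with a fixed quadratic characteristic polynomial governs the recursion; hyperbolicity along the periodic word $\mu_1\mu_2\mu_3$ says nothing about a generic branch. (ii) Dominance ``after the first few steps'' cannot close an induction on sign-coherence: the tropical sign $\varepsilon_k^{\bf w}$ needed to even write the recursion at step $n+1$ only exists if $C^{\bf w}$ is sign-coherent at step $n$, so the estimate must hold at every step, including the first ones. (iii) You use cluster-cyclicity only through the preservation of the cyclic sign pattern, never through its quantitative consequence $|b_{ij}^{\bf w}b_{ji}^{\bf w}|\geq 4$; yet it is precisely the resulting factor $|b_{sk}^{\bf w}b_{ks}^{\bf w}|-2\geq 2$ that makes the dominance inequality work, and no argument based on sign patterns alone can produce such a bound. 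The ``self-contained recursion, monotonicity, and quadratic characterization'' you defer to are not downstream conveniences: the first two are the inductive hypotheses themselves (the quadratic equations of Section~\ref{Sec: quadratic} are a consequence, not an input), and without formulating them precisely the induction does not close.
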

Let $\mcT$ be the set of all the reduced sequences $\mfw$, which are used to record the path of cluster mutations. We also define the tropical sign pattern $\varepsilon(B)$ (\Cref{var-pattern}) associated with the real exchange matrix $B$. To prove the main theorem as above, we need to simultaneously prove the monotonicity of $c$-vectors (\Cref{lem: inequalities}) and recursion formulas of the corresponding tropical sign pattern as follows. 
\begin{theorem}[{\Cref{thm: recursion for tropical signs}}] \label{intro main2}
	Let $B \in \mathrm{M}_3(\mathbb{R})$ be a cluster-cyclic exchange matrix. Let ${\bf w} \in \mathcal{T}\backslash\{\emptyset\}$ and $k$ be the last index of ${\bf w}$.\\
$(a)$\ There is a unique $s \in \{1,2,3\}\backslash\{k\}$ such that
\begin{equation}\label{intro: eq: choice of s}
\varepsilon_{s}^{\bf w}\mathrm{sign}(b_{ks}^{\bf w})=-1,\ \varepsilon_{k}^{\bf w} \neq \varepsilon_{s}^{\bf w}.
\end{equation}
$(b)$\ Let $s$ be the index defined by $(a)$ and $t \in \{1,2,3\}\backslash\{k,s\}$ be another index. Then, the following equalities hold:
\begin{equation}\label{intro: eq: recursion for tropical signs}
\begin{aligned}
(\varepsilon_k^{{\bf w}[s]},\varepsilon_s^{{\bf w}[s]},\varepsilon_t^{{\bf w}[s]}) &= (-\varepsilon_k^{{\bf w}},-\varepsilon_s^{{\bf w}},\varepsilon_t^{{\bf w}}),\\
(\varepsilon_k^{{\bf w}[t]},\varepsilon_s^{{\bf w}[t]},\varepsilon_t^{{\bf w}[t]}) &= (\varepsilon_k^{{\bf w}},\varepsilon_s^{{\bf w}},-\varepsilon_t^{{\bf w}}).
\end{aligned}
\end{equation}
\end{theorem}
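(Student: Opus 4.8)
The plan is to prove $(a)$ and $(b)$ by a single induction on the length $|\mathbf{w}|$, run simultaneously with sign-coherence (\Cref{thm: sign-coherency}) and the monotonicity inequalities (\Cref{lem: inequalities}); the three statements are genuinely intertwined and cannot be proved in isolation. Throughout I use the tropicalized $c$-vector recursion of \Cref{defCG}: mutating at $s$ sends $c_s^{\mathbf{w}} \mapsto -c_s^{\mathbf{w}}$, while for $j \neq s$ one has $c_j^{\mathbf{w}[s]} = c_j^{\mathbf{w}} + [\varepsilon_s^{\mathbf{w}} b_{sj}^{\mathbf{w}}]_+\, c_s^{\mathbf{w}}$, where by the inductive form of sign-coherence each $c_j^{\mathbf{w}}$ is a nonzero vector with all entries of sign $\varepsilon_j^{\mathbf{w}}$. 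I would cut down the casework using the $\mathcal{D}_6$-relabelling symmetry, so that it suffices to treat one cyclic orientation and one fixed last index $k$, and I would track the pair (sign pattern $(\varepsilon_1^{\mathbf{w}},\varepsilon_2^{\mathbf{w}},\varepsilon_3^{\mathbf{w}})$, orientation of $b^{\mathbf{w}}$) as a finite-state invariant. The base case $\mathbf{w} = (k)$ follows by direct computation from $C = I$, giving $\varepsilon_k^{(k)} = -1$ and $\varepsilon_j^{(k)} = +1$ for $j \neq k$, from which $(a)$ and $(b)$ are read off.

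Part $(a)$ rests on cluster-cyclicity. By \Cref{cc and ca}, the entire mutation class of $B$ stays cyclic, so each $b^{\mathbf{w}}$ is an oriented $3$-cycle whose orientation reverses at every mutation and whose off-diagonal products obey the magnitude bound characteristic of the cluster-cyclic regime. In particular the two off-diagonal entries of row $k$ have opposite signs, so, once the tropical signs are fixed, exactly one index $j \neq k$ can satisfy $\varepsilon_j^{\mathbf{w}} \mathrm{sign}(b_{kj}^{\mathbf{w}}) = -1$. To obtain the remaining clause $\varepsilon_k^{\mathbf{w}} \neq \varepsilon_s^{\mathbf{w}}$ together with genuine uniqueness, I verify that the base state lies in an explicit finite list of admissible (sign pattern, orientation) states and that this list is closed under the two transitions dictated by $(b)$; within each admissible state $(a)$ is then a finite combinatorial check.

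For part $(b)$ I read the sign changes off the recursion. The flips $\varepsilon_s^{\mathbf{w}[s]} = -\varepsilon_s^{\mathbf{w}}$ and $\varepsilon_t^{\mathbf{w}[t]} = -\varepsilon_t^{\mathbf{w}}$ are automatic from $c_s^{\mathbf{w}} \mapsto -c_s^{\mathbf{w}}$. The stable coordinates $\varepsilon_t^{\mathbf{w}[s]} = \varepsilon_t^{\mathbf{w}}$, $\varepsilon_k^{\mathbf{w}[t]} = \varepsilon_k^{\mathbf{w}}$ and $\varepsilon_s^{\mathbf{w}[t]} = \varepsilon_s^{\mathbf{w}}$ are cases in which the added multiple $[\varepsilon_s^{\mathbf{w}} b_{sj}^{\mathbf{w}}]_+\, c_s^{\mathbf{w}}$ either vanishes or shares the sign of $c_j^{\mathbf{w}}$; here sign-coherence and non-negativity of the coefficient settle the sign once the known orientation is plugged in. The single nontrivial equality is $\varepsilon_k^{\mathbf{w}[s]} = -\varepsilon_k^{\mathbf{w}}$: part $(a)$ gives $\mathrm{sign}(b_{sk}^{\mathbf{w}}) = \varepsilon_s^{\mathbf{w}}$ and $\varepsilon_k^{\mathbf{w}} = -\varepsilon_s^{\mathbf{w}}$, so $c_k^{\mathbf{w}[s]} = c_k^{\mathbf{w}} + |b_{sk}^{\mathbf{w}}|\, c_s^{\mathbf{w}}$ is a cancellation of two oppositely-signed vectors, and the assertion is that $c_s^{\mathbf{w}}$ dominates in every coordinate.

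This dominance is the crux, and it is precisely what forces \Cref{lem: inequalities} into the same induction: one must establish the strict entrywise inequality $|b_{sk}^{\mathbf{w}}|\,|c_s^{\mathbf{w}}| > |c_k^{\mathbf{w}}|$, which propagates along the reduced path and ultimately rests on the lower bound for $|b_{sk}^{\mathbf{w}} b_{ks}^{\mathbf{w}}|$ supplied by cluster-cyclicity. I expect the main difficulty to be exactly this mutual dependence: the sign recursion $(b)$ at length $n$ sets up the correct cancellation pattern, the magnitude inequalities at length $n$ resolve its sign, and both must then be re-established for the two children $\mathbf{w}[s]$ and $\mathbf{w}[t]$ to close the loop. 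Verifying that the $(b)$-transitions preserve the finite admissible state-list simultaneously confirms that the index $s$ singled out by $(a)$ remains unique at the next step, which is what keeps the whole induction consistent.
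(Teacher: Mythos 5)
Your overall architecture is the same as the paper's: a single induction on $|\mathbf{w}|$ carrying sign-coherence, parts $(a)$ and $(b)$, and the entrywise inequalities of \Cref{lem: inequalities} simultaneously, with the cancellation $\mathbf{c}_k^{\mathbf{w}[s]}=\mathbf{c}_k^{\mathbf{w}}+|b_{sk}^{\mathbf{w}}|\mathbf{c}_s^{\mathbf{w}}$ correctly identified as the crux and cluster-cyclicity entering through $|b_{sk}^{\mathbf{w}}b_{ks}^{\mathbf{w}}|\geq 4$ (\Cref{prop: lowerbound of b}). The genuine gap is quantitative: the dominance inequality you propose to carry through the induction, $|b_{sk}^{\mathbf{w}}|\,|\mathbf{c}_s^{\mathbf{w}}|>|\mathbf{c}_k^{\mathbf{w}}|$, is too weak on two counts. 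First, it does not imply the monotonicity \eqref{eq: monotonicity} that you yourself place in the induction package: in direction $s$ monotonicity amounts to $-(\mathbf{c}_k^{\mathbf{w}}+|b_{sk}^{\mathbf{w}}|\mathbf{c}_s^{\mathbf{w}})\geq \mathbf{c}_k^{\mathbf{w}}$, i.e. a factor $2$, not $1$. Second, and fatally, it does not self-propagate. Normalize $\varepsilon_k^{\mathbf{w}}=+1$, $\varepsilon_s^{\mathbf{w}}=-1$, write $v=|b_{sk}^{\mathbf{w}}|$, $y=|b_{ks}^{\mathbf{w}}|$, and suppose the constant-$\alpha$ inequality $\alpha\mathbf{c}_k^{\mathbf{w}}+v\mathbf{c}_s^{\mathbf{w}}\leq\mathbf{0}$. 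At the child $\mathbf{w}[s]$ the distinguished pair $(s,k)$ becomes $(k,s)$, and the inequality to be re-established reads $y\mathbf{c}_k^{\mathbf{w}}+(yv-\alpha)\mathbf{c}_s^{\mathbf{w}}\leq\mathbf{0}$; substituting $\mathbf{c}_s^{\mathbf{w}}\leq-\tfrac{\alpha}{v}\mathbf{c}_k^{\mathbf{w}}$ gives the bound
\begin{equation*}
y\mathbf{c}_k^{\mathbf{w}}+(yv-\alpha)\mathbf{c}_s^{\mathbf{w}}\ \leq\ \frac{yv(1-\alpha)+\alpha^{2}}{v}\,\mathbf{c}_k^{\mathbf{w}},
\end{equation*}
which is $\leq\mathbf{0}$ for all matrices with $yv\geq 4$ if and only if $\alpha^{2}/(\alpha-1)\leq 4$, i.e. if and only if $\alpha=2$. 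With your $\alpha=1$ the bound degenerates to $\tfrac{1}{v}\mathbf{c}_k^{\mathbf{w}}\geq\mathbf{0}$ and the induction does not close. This is exactly why the paper's \eqref{eq: k,s inequality} carries the weighted constant $|b_{sk}^{\mathbf{w}}b_{ks}^{\mathbf{w}}|-2\ (\geq 2)$: the paper re-establishes it in the $t$-direction by pure telescoping, and in the $s$-direction by telescoping plus a reach-back to the parent sequence $\mathbf{w}'$ (their statement $(4)_{r-1}$, the monotonicity between $\mathbf{w}'$ and $\mathbf{w}$). Your forward-only scheme is repairable, but only after replacing your inequality by the constant-$2$ (or the paper's weighted) version.

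Two smaller points. In part $(a)$ you claim that, since the two off-diagonal entries of row $k$ have opposite signs, at most one $j\neq k$ can satisfy $\varepsilon_j^{\mathbf{w}}\mathrm{sign}(b_{kj}^{\mathbf{w}})=-1$; this is false precisely when $\varepsilon_s^{\mathbf{w}}\neq\varepsilon_t^{\mathbf{w}}$, in which case both or neither of the two indices satisfies that clause, so uniqueness genuinely requires the second clause $\varepsilon_k^{\mathbf{w}}\neq\varepsilon_s^{\mathbf{w}}$ together with the finite-state closure check you describe (which is, in substance, the paper's case elimination via $(2)_r$). Finally, be careful that "cutting casework by $\mathcal{D}_6$-symmetry" can only mean the a priori relabelling symmetry of indices combined with a global sign flip; the dihedral group structure on tropical signs is derived in this paper only as a consequence of \Cref{thm: recursion for tropical signs}, so invoking it inside the proof would be circular.
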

It is worth mentioning that there are some counter-examples for the cluster-acyclic exchange matrices, see \Cref{counter}. When $B$ is an integer skew-symmetric matrix (quiver type) in (ordinary) cluster algebras of rank $n$, positive $c$-vectors are real Schur roots, namely the dimension vectors of indecomposable rigid modules \cite{ST13, Cha15, HK16}. Then, more relations among them  are studied by \cite{Sev15, FT18, Ngu22, LLM23, EJLN24}. However, the properties of the $c$-vectors associated with cluster-cyclic exchange matrices are still mysterious. For the rank 3 real cluster-cyclic case, we define the \emph{quasi-Cartan deformation} by $\tilde{A}_{k_1}$ and $\tilde{A}^{\mfw}$ in \Cref{quasi-Cartan companion of P}. Then, based on \Cref{intro main1} and \Cref{intro main2}, we give a quasi-Cartan congruence relation of $C$-matrices as follows. 
\begin{theorem}[{\Cref{duality}}]
	Let $B \in \mathrm{M}_3(\mathbb{R})$ be a cluster-cyclic exchange matrix with the skew-symmetrizer $D$ and $\mfw=[k_1,\dots,k_r]$ be any reduced sequence with $r\geq 1$. Then, the quasi-Cartan congruence relation as follows holds:
	\begin{align}
		(C^{\mfw})^{\top}\tilde{A}_{k_1}C^{\mfw}=\tilde{A}^{\mfw}. \label{intro: qCc}
	\end{align}
\end{theorem}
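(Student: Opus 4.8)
The plan is to induct on the length $r$ of $\mfw=[k_1,\dots,k_r]$, keeping $\tilde{A}_{k_1}$ fixed on the left-hand side throughout, so that the global congruence \eqref{intro: qCc} collapses to a single local congruence per mutation. First I would record the $C$-matrix recursion of \Cref{defCG} in the closed form made available by sign-coherence (\Cref{thm: sign-coherency}). Writing $[x]_+=\max(x,0)$ and letting $k$ be the last index of $\mfw$, for an admissible extension $\mfw[j]$ (with $j\in\{1,2,3\}\setminus\{k\}$, i.e. $j\in\{s,t\}$) the $j$-th column of $C^{\mfw}$ has a well-defined tropical sign $\varepsilon_j^{\mfw}$, and
\[
C^{\mfw[j]}=C^{\mfw}\,\Gamma_j,\qquad (\Gamma_j)_{jj}=-1,\ \ (\Gamma_j)_{ll}=1\ (l\neq j),\ \ (\Gamma_j)_{jl}=[\varepsilon_j^{\mfw}b^{\mfw}_{jl}]_+\ (l\neq j),
\]
with all remaining entries zero. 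Substituting into the inductive hypothesis $(C^{\mfw})^{\top}\tilde{A}_{k_1}C^{\mfw}=\tilde{A}^{\mfw}$ gives
\[
(C^{\mfw[j]})^{\top}\tilde{A}_{k_1}C^{\mfw[j]}=\Gamma_j^{\top}\big((C^{\mfw})^{\top}\tilde{A}_{k_1}C^{\mfw}\big)\Gamma_j=\Gamma_j^{\top}\tilde{A}^{\mfw}\Gamma_j,
\]
so the theorem reduces to the purely local identity $\Gamma_j^{\top}\tilde{A}^{\mfw}\Gamma_j=\tilde{A}^{\mfw[j]}$.

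To prove this local identity I would use that, by \Cref{quasi-Cartan companion of P}, $\tilde{A}^{\mfw}$ is determined by the exchange matrix $B^{\mfw}$ together with the tropical sign vector $(\varepsilon_1^{\mfw},\varepsilon_2^{\mfw},\varepsilon_3^{\mfw})$, and that it is compatible with the skew-symmetrizer $D$, which is preserved by every mutation (so $DB^{\mfw}=-(B^{\mfw})^{\top}D$). The identity then becomes a finite entrywise comparison: on the left I insert the explicit entries of $\Gamma_j$ and of $\tilde{A}^{\mfw}$, while on the right I replace $B^{\mfw}$ by $\mu_j(B^{\mfw})$ and the signs by those prescribed in \eqref{intro: eq: recursion for tropical signs}. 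The two admissible extensions are handled separately, precisely because the sign recursion differs: for $j=s$ the pair $(\varepsilon_k^{\mfw},\varepsilon_s^{\mfw})$ is negated while $\varepsilon_t^{\mfw}$ is fixed, whereas for $j=t$ only $\varepsilon_t^{\mfw}$ is negated. Compatibility with $D$ forces the diagonal entries to match automatically and keeps both sides within the class of quasi-Cartan deformations, so that only the off-diagonal comparison carries content. The base case $r=1$ is then a direct check of $(C^{[k_1]})^{\top}\tilde{A}_{k_1}C^{[k_1]}=\tilde{A}^{[k_1]}$, where $C^{[k_1]}=\Gamma_{k_1}$ is a single mutation of the identity and both sides are explicit.

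The main obstacle is the local congruence $\Gamma_j^{\top}\tilde{A}^{\mfw}\Gamma_j=\tilde{A}^{\mfw[j]}$, and specifically the control of the off-diagonal entries of the quadratic form $\Gamma_j^{\top}\tilde{A}^{\mfw}\Gamma_j$. The decisive input is the existence and uniqueness of the index $s$ from \eqref{intro: eq: choice of s}: the sign condition $\varepsilon_s^{\mfw}\sign(b^{\mfw}_{ks})=-1$, together with the cyclic sign pattern characteristic of a rank $3$ cluster-cyclic matrix, pins down which products $\varepsilon_l^{\mfw}b^{\mfw}_{jl}$ are nonnegative, hence which truncations $[\varepsilon_j^{\mfw}b^{\mfw}_{jl}]_+$ vanish. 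Without this information the cross terms in $\Gamma_j^{\top}\tilde{A}^{\mfw}\Gamma_j$ would not collapse onto the deformed entries of $\tilde{A}^{\mfw[j]}$. I expect the verification to split into the two cases $j=s$ and $j=t$, each reducing — after inserting the sign data of \Cref{thm: recursion for tropical signs} — to a short computation matching the three off-diagonal entries. Once both cases are settled the induction closes and \eqref{intro: qCc} holds for every reduced sequence of length $r\geq 1$.
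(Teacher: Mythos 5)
Your proposal is correct and is essentially the paper's own proof: the paper likewise inducts on $|\mfw|$ with $\tilde{A}_{k_1}$ fixed, uses the sign-coherent recursion $C^{\mfw[i]}=C^{\mfw}X^{i;\mfw}$ (your $\Gamma_j$) to reduce \eqref{intro: qCc} to the local congruence $(X^{i;\mfw})^{\top}\tilde{A}^{\mfw}X^{i;\mfw}=\tilde{A}^{\mfw[i]}$, and then verifies that congruence entrywise in the two cases $i=s$ and $i=t$ using \Cref{thm: recursion for tropical signs}. One small correction to your sketch: the diagonal entries do not match \emph{automatically} from $D$-compatibility --- for instance $(\mfe_k+b^{\mfw}_{sk}\mfe_s)^{\top}\tilde{A}^{\mfw}(\mfe_k+b^{\mfw}_{sk}\mfe_s)=2d_k$ holds only because $\tilde{a}^{\mfw}_{sk}=-d_s b^{\mfw}_{sk}$, i.e.\ because $(s,k)\in\mcP^{\mfw}$, which is exactly the same sign bookkeeping you invoke for the off-diagonal entries, so your entrywise comparison must include the diagonal as well.
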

In particular, if we focus on the diagonal entries of \eqref{intro: qCc}, we may obtain that the $c$-vectors are roots of some quadratic equations, see \Cref{main equation}. As an application, we can prove \Cref{conjecture: for real entries} for this case (\Cref{prop: conjectures are true}), which is first proposed in \cite{AC25}. Furthermore, we study two important exchange graphs of $C$-pattern $\mfE \mfG(\mfC)$ and $G$-pattern $\mfE \mfG(\mfG)$ defined by \Cref{CG-EG}. In the following, we exhibit their corresponding $3$-regular tree structure.
\begin{theorem}[{\Cref{thm: exchange graph C}}]
For any cluster-cyclic exchange matrix $B \in \mathrm{M}_3(\mathbb{R})$, the exchange graph of $C$-pattern $\mfE \mfG(\mfC)$ and the exchange graph of $G$-pattern $\mfE \mfG(\mfG)$ are the $3$-regular trees.
\end{theorem}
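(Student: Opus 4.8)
The plan is to realise both exchange graphs as the abstract $3$-regular tree of reduced sequences and to reduce everything to a single injectivity statement. Let $\mbT_3$ be the graph whose vertex set is $\mcT$ and whose edges join $\mfw$ to $\mfw[s]$ for each admissible extension $s$. A reduced word of positive length has a unique parent, obtained by deleting its last letter, and exactly two admissible extensions (the two indices different from its last one), while the empty word $\emptyset$ has the three extensions $[1],[2],[3]$; hence $\mbT_3$ is the $3$-regular tree. The assignment $\mfw\mapsto C^{\mfw}$ defines a surjective graph morphism $\Phi\colon\mbT_3\to\mfE\mfG(\mfC)$ sending edges to edges, since passing from $\mfw$ to $\mfw[s]$ is precisely the mutation $\mu_s$ of $C$-matrices. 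Because $\mbT_3$ is a tree, $\Phi$ is a graph isomorphism as soon as it is injective, and then $\mfE\mfG(\mfC)\cong\mbT_3$ is the $3$-regular tree. Thus the whole theorem rests on proving that distinct reduced words yield distinct $C$-matrices.

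As a first, local step I would use the tropical sign recursion of \Cref{intro main2} to separate the forward directions. Fix $\mfw$ with last index $k$ and admissible extensions $s,t$. Reading \eqref{intro: eq: recursion for tropical signs}, the tropical sign vectors of the two children are $\varepsilon^{\mfw[s]}=(-\varepsilon_k^{\mfw},-\varepsilon_s^{\mfw},\varepsilon_t^{\mfw})$ and $\varepsilon^{\mfw[t]}=(\varepsilon_k^{\mfw},\varepsilon_s^{\mfw},-\varepsilon_t^{\mfw})$, which differ from each other and from $\varepsilon^{\mfw}$, since sign-coherence (\Cref{intro main1}) guarantees that every coordinate is nonzero. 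In particular $C^{\mfw[s]}$, $C^{\mfw[t]}$ and $C^{\mfw}$ are pairwise distinct, so no single mutation direction closes a loop and the two children never collapse onto one another.

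The main obstacle is the global statement, equivalently the absence of cycles. Here I would lean on the monotonicity of $c$-vectors (\Cref{lem: inequalities}) together with the quadratic description of \Cref{main equation}. Since cluster-cyclicity is preserved under mutation, a cycle in $\mfE\mfG(\mfC)$ may be re-based at the initial seed, so that it suffices to exclude a nonempty reduced word $\mfw$ with $C^{\mfw}=C^{\emptyset}=I$; lifting the based cycle to a non-backtracking path out of the root of $\mbT_3$, such a path is strictly descending, every step going from a parent to a child. I then attach to each matrix a real complexity $N(C^{\mfw})$ — concretely the sum of the entries of the positive $c$-vectors — and use \Cref{lem: inequalities} to show that $N$ strictly increases along every parent-to-child edge. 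Strict monotonicity along the descending path forces $N(C^{\mfw})>N(I)$, contradicting $C^{\mfw}=I$. The delicate point, and the step I expect to require the most care, is certifying that the inequality is \emph{strict} in every one of the sign configurations generated by \Cref{intro main2}, so that no descending step can ever stall; this is exactly where the quadratic equations of \Cref{main equation} pin down the $c$-vectors precisely enough to guarantee strictness. Once injectivity of $\Phi$ is in hand, the three mutation-images $\mu_1C^{\mfw},\mu_2C^{\mfw},\mu_3C^{\mfw}$ are automatically distinct (they are the images of the three distinct neighbours of $\mfw$), so $\mfE\mfG(\mfC)$ is $3$-regular, and $\Phi$ is the desired isomorphism onto $\mbT_3$.

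Finally I would transfer the conclusion to the $G$-pattern by tropical duality. Through the fixed skew-symmetrizer $D$ the matrices $C^{\mfw}$ and $G^{\mfw}$ determine one another, so $\mfw\mapsto G^{\mfw}$ is injective precisely when $\mfw\mapsto C^{\mfw}$ is, and a single mutation relates $G^{\mfw}$ to $G^{\mfw[s]}$ exactly when it relates the corresponding $C$-matrices. Consequently $\mfw\mapsto G^{\mfw}$ is again a graph isomorphism $\mbT_3\cong\mfE\mfG(\mfG)$, and both $\mfE\mfG(\mfC)$ and $\mfE\mfG(\mfG)$ are $3$-regular trees.
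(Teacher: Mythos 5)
Your treatment of the $C$-pattern half is essentially the paper's argument in different clothing: your complexity $N$ --- provided it is read as the sum of the entries of the \emph{positivized} columns $\varepsilon_i^{\mfw}\mfc_i^{\mfw}$, i.e.\ $\kappa(C^{\mfw})=\sum_{i,j}|c_{ij}^{\mfw}|$ as in \Cref{lem: monotonicity of kappa} --- strictly increases from parent to child, and this is already guaranteed by \Cref{lem: inequalities}~(b), whose statement includes the strictness you worry about; \Cref{main equation} plays no role there. (If instead you literally mean summing only over the columns that are positive, that quantity is \emph{not} monotone: a positive column with large entries flips to a negative one under mutation in its own direction, so $N$ can drop.) Two further repairs are needed in this half. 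First, your re-basing step silently invokes a chain rule $C_B^{\mfw_0\mfv}=C_B^{\mfw_0}C_{B^{\mfw_0}}^{\mfv}$ for \emph{real} matrices, which is nowhere proved; the paper avoids re-basing altogether by taking a vertex of maximal complexity on a hypothetical cycle and noting that its two cycle-neighbours lie among $[C^{\mfw[1]}],[C^{\mfw[2]}],[C^{\mfw[3]}]$, of which at most one is the parent. Second, since vertices of $\mfE\mfG(\mfC)$ are matrices up to column permutation, re-basing a closed walk yields $C^{\mfv}=P_\sigma$ for some permutation matrix $P_\sigma$, not necessarily $C^{\mfv}=I$; fortunately $\kappa(P_\sigma)=3$, so the same monotonicity excludes this, but the reduction as you state it is incomplete.

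The genuine gap is your transfer to the $G$-pattern. The second duality does make the \emph{labeled} assignments determine one another, so $\mfw\mapsto G^{\mfw}$ is injective iff $\mfw\mapsto C^{\mfw}$ is; but the exchange graphs are quotients by column permutations, and for real skew-symmetrizers the two equivalence relations need not coincide. Concretely, if $G^{\mfu}=\tilde{\sigma}(G^{\mfw})$, then $C^{\mfu}=C^{\mfw}D^{-1}P_{\sigma}D$, whose $j$th column is $d_jd_{\sigma^{-1}(j)}^{-1}\,\mfc^{\mfw}_{\sigma^{-1}(j)}$; this is a column permutation of $C^{\mfw}$ only when $D$ commutes with $P_{\sigma}$, so $[G^{\mfw}]=[G^{\mfu}]$ does not automatically give $[C^{\mfw}]=[C^{\mfu}]$. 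Ruling out the rescaled possibility is precisely the content of \Cref{conjecture: for real entries}~(b), and this is where the paper actually uses the quadratic equations: \Cref{main equation} yields \Cref{prop: conjectures are true}, which feeds \Cref{prop: two exchange graphs are the same} to identify the two exchange graphs. So your instinct that \Cref{main equation} must enter the proof is correct, but it is needed at this step, not for strictness; as written, your final paragraph asserts as automatic the one point of the theorem that requires that machinery.
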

As an important corollary of this theorem, we obtain the following combinatorial property.
\begin{corollary}[\Cref{cor: exchange graph}]
For the  cluster-cyclic skew-symmetrizable cluster algebra $\mcA$ of rank $3$, the following statements hold:
\begin{enumerate}
	\item The exchange graph of cluster pattern $\mfE \mfG(\bf{\Sigma})$ is a $3$-regular tree.
	\item The exchange graph of $C$-pattern $\mfE \mfG(\mfC)$ is a $3$-regular tree.
	\item The exchange graph of $G$-pattern $\mfE \mfG(\mfG)$ is a $3$-regular tree.
\end{enumerate}
	
\end{corollary}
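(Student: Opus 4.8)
The plan is to deduce all three statements from the already-established \Cref{thm: exchange graph C}, treating $(2)$ and $(3)$ as direct specializations and obtaining $(1)$ from the coefficient-independence of the cluster-pattern exchange graph. Throughout I write $\mbT_3$ for the $3$-regular tree and recall that each reduced word in $\mcT$ indexes a vertex of $\mbT_3$ together with the seed, the $C$-matrix and the $G$-matrix attached to it. For $(2)$ and $(3)$, the point is simply that an integer skew-symmetrizable cluster-cyclic exchange matrix is in particular a real cluster-cyclic exchange matrix in $\mathrm{M}_3(\mbR)$. Hence \Cref{thm: exchange graph C} applies verbatim to $\mcA$, and the exchange graphs $\mfE\mfG(\mfC)$ and $\mfE\mfG(\mfG)$ are the $3$-regular trees; no further argument is required.

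For $(1)$, I would first invoke the separation formula of Fomin--Zelevinsky \cite{FZ07}: if two vertices of $\mbT_3$ carry the same exchange matrix and the same $G$-matrix, then all of their cluster variables agree, so their seeds coincide. This yields a covering (a locally bijective, surjective graph homomorphism) $\pi \colon \mfE\mfG(\mfG) \twoheadrightarrow \mfE\mfG({\bf \Sigma})$, exhibiting the cluster-pattern exchange graph as a quotient of the $3$-regular tree $\mfE\mfG(\mfG)$. It then remains to prove that $\pi$ is injective: equivalently, that no nontrivial reduced word $\mfw \in \mcT \backslash \{\emptyset\}$ can return the seed to $\Sigma_0$ (up to relabelling) while moving its $G$-matrix, since such a word would collapse an honest geodesic of the tree $\mfE\mfG(\mfG)$ into a cycle of $\mfE\mfG({\bf \Sigma})$.

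This injectivity is the heart of the matter, and it is precisely the assertion that the cluster-pattern exchange graph of a skew-symmetrizable cluster algebra is independent of the coefficients and coincides with its principal-coefficient avatar. Because $\mcA$ is a genuine integer skew-symmetrizable cluster algebra, this is available from the sign-coherence theorem of Gross--Hacking--Keel--Kontsevich \cite{GHKK18} together with the ensuing linear independence of cluster monomials: distinct seeds have distinct $G$-matrices, so $\pi$ identifies no two vertices. The main obstacle is therefore not a computation but the careful verification that seed-periodicity forces $G$-matrix-periodicity; once this is settled, $\pi$ is an isomorphism and $\mfE\mfG({\bf \Sigma}) \cong \mfE\mfG(\mfG) \cong \mbT_3$, which establishes $(1)$ and completes the corollary.
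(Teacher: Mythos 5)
Your handling of $(2)$ and $(3)$ is exactly the paper's: an integer cluster-cyclic exchange matrix is in particular a real one, so \Cref{thm: exchange graph C} applies verbatim. For $(1)$, however, the paper takes a much shorter route than you do: it simply invokes Nakanishi's synchronicity theorem, which it has already recorded as \Cref{Synchronicity Nak}, asserting that for an \emph{integer} exchange matrix the three periodicities $\Sigma^{\mfw}=\sigma(\Sigma^{\mfu})$, $C^{\mfw}=\tilde{\sigma}(C^{\mfu})$, $G^{\mfw}=\tilde{\sigma}(G^{\mfu})$ are equivalent; hence $\mfE\mfG(\bf{\Sigma})$, $\mfE\mfG(\mfC)$ and $\mfE\mfG(\mfG)$ are pairwise isomorphic and $(1)$ follows immediately from $(3)$. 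What you propose instead is to re-derive this equivalence from its primitive ingredients: the separation formula of \cite{FZ07} for the direction ``same $B$ and same $G$-matrix $\Rightarrow$ same seed,'' and sign-coherence plus linear independence of cluster monomials \cite{GHKK18} for the converse. This is legitimate in substance --- it is essentially how synchronicity is proved in \cite{Nak21} --- but it is heavier than necessary given that the paper already states the theorem, and one step is imprecise as written: the separation formula expresses a cluster variable through its $g$-vector \emph{and} its $F$-polynomial, so equality of exchange matrices and $G$-matrices does not by itself yield equality of cluster variables. One must also know that the $F$-polynomials at the two vertices agree, which is itself a nontrivial part of the synchronicity phenomenon (it can be supplied, e.g., by the GHKK result that a cluster monomial is determined by its $g$-vector, or by tropical duality plus a detropicalization argument). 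Since both directions you need are precisely the content of \Cref{Synchronicity Nak}, the cleanest repair of your argument for $(1)$ is simply to cite that theorem, which is what the paper does.
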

Furthermore, we investigate more important properties of tropical signs, whose recursion is given by Theorem~\ref{intro main2}. For this purpose, we separate the set $\mcT$ into two types of subsets, called the \emph{trunk} and the \emph{branch}, see \Cref{definition: tree of sequences}. Since a trunk consists only of special mutations, its structure is relatively simple, see Proposition~\ref{tropical trunk}. In contrast, the structure of a branch is more complicated. Based on Theorem~\ref{intro main2}, we slightly change the notation of mutation to a monoid action of some monoid $\mathcal{M}$, see Definition~\ref{def: monoid action of M}. We then introduce its quotient monoid $\overline{\mathcal{M}}$ whose action is faithful. Then, the group structure of $\overline{\mathcal{M}}$, which reflects a mutation of tropical signs, is related to the dihedral group structure $\mcD_6$.
\begin{theorem}[{\Cref{thm: group structure of M}}]
The quotient monoid $\overline{\mathcal{M}}$ is a group which is isomorphic to the diheadral group $\mathcal{D}_{6}$ of the order $12$. In particular, the following relations hold:
\begin{equation}
\bar{S}^2=\bar{T}^6=(\bar{S}\bar{T})^2=\mathrm{id}.
\end{equation}
\end{theorem}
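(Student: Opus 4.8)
The plan is to identify $\overline{\mathcal{M}}$ with the standard presentation of the dihedral group. Recall that $\mathcal{D}_6$ is presented by $\langle s,t \mid s^2 = t^6 = (st)^2 = \mathrm{id}\rangle$ and has order $12$. Accordingly I would proceed in two stages: first verify that the three relations $\bar{S}^2 = \bar{T}^6 = (\bar{S}\bar{T})^2 = \mathrm{id}$ hold in $\overline{\mathcal{M}}$, and then show that $\bar{T}$ has order exactly $6$. Once the relations hold, both generators $\bar{S}$ and $\bar{T}$ are invertible (with $\bar{S}^{-1} = \bar{S}$ and $\bar{T}^{-1} = \bar{T}^5$), so the monoid $\overline{\mathcal{M}}$ is in fact a group, and the relations furnish a surjective homomorphism $\mathcal{D}_6 \twoheadrightarrow \overline{\mathcal{M}}$; in particular $|\overline{\mathcal{M}}|$ divides $12$.

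The key reduction is the following group-theoretic observation, which I would isolate as a preliminary step: among all proper quotients of $\mathcal{D}_6$ the image of $t$ never retains order $6$. Indeed, the nontrivial normal subgroups of $\mathcal{D}_6$ are $\langle t^3\rangle$, $\langle t^2\rangle$, $\langle t\rangle$, the two index-two copies of $S_3$, and $\mathcal{D}_6$ itself; in each corresponding quotient the image of $t$ has order $3$, $2$, $1$, $2$, $2$, or $1$ respectively. Hence the only quotient in which $\bar{T}$ has order $6$ is $\mathcal{D}_6$ itself, so the second stage alone forces the surjection $\mathcal{D}_6 \twoheadrightarrow \overline{\mathcal{M}}$ to be an isomorphism.

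To verify the relations I would exploit that, by Definition~\ref{def: monoid action of M}, the action of $\overline{\mathcal{M}}$ on tropical sign data is faithful; thus it suffices to check each relation as an identity of transformations of sign patterns. Starting from an arbitrary admissible configuration $(\varepsilon^{\mathbf{w}},k)$, I would apply the recursion \eqref{intro: eq: recursion for tropical signs} of Theorem~\ref{intro main2} step by step, at each step recomputing the distinguished index $s$ from part~(a) and the remaining index $t$. The relation $\bar{S}^2 = \mathrm{id}$ then reduces to checking that the paired sign-flips of an $\bar{S}$-move cancel once the index roles are updated, and $(\bar{S}\bar{T})^2 = \mathrm{id}$ to an analogous but longer cancellation; the substantive identity is $\bar{T}^6 = \mathrm{id}$, which I would establish by tracing a length-six chain of single-sign flips and confirming it returns to the initial configuration.

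The main obstacle is precisely this computation of $\bar{T}$ together with the sharpness claim that its order is exactly $6$. The difficulty is that the action is state-dependent: because $s$ and $t$ in \eqref{intro: eq: recursion for tropical signs} are reselected after every move via part~(a), one cannot simply compose fixed sign-flip operators but must follow the orbit of configurations through all six steps, confirming both that the cycle closes and that it does not close prematurely. I expect the trunk/branch decomposition of Definition~\ref{definition: tree of sequences}, and especially the explicit description of tropical signs along a trunk in Proposition~\ref{tropical trunk}, to organize exactly this six-step cycle, so that the order-six rotation of $\mathcal{D}_6$ emerges as the intrinsic periodicity of tropical signs. Exhibiting one such size-six orbit, on which $\bar{T}$ cycles and $\bar{S}$ acts as a reflection, simultaneously proves that $\bar{T}$ has order $6$ and makes the hexagonal symmetry visible, completing the identification with $\mathcal{D}_6$.
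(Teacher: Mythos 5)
Your proposal is correct, and its computational core is the same as the paper's: the paper's entire proof consists of the orbit diagram in Figure~\ref{fig: proof of the diheadral group}, i.e.\ the explicit computation of the $S$- and $T$-moves on all twelve admissible configurations of Lemma~\ref{lem: set E}, from which the dihedral structure is read off directly (two hexagonal $T$-cycles interchanged by the involution $S$, acting faithfully). Where you genuinely differ is the endgame: instead of identifying the permutation group from the full diagram, you verify the three relations (giving, via von Dyck's theorem, a surjection $\mathcal{D}_6 \twoheadrightarrow \overline{\mathcal{M}}$) and then use the normal-subgroup lattice of $\mathcal{D}_6$ --- in every proper quotient the image of $t$ has order at most $3$ --- so that exhibiting a single $6$-element orbit of $\bar{T}$ forces the surjection to be an isomorphism. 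Your route buys a cleaner logical conclusion (no need to check that the generated permutation group has order exactly $12$), at the cost of an easy group-theoretic lemma; the paper's route buys immediacy once the diagram is computed. One correction to your plan: the six-step $T$-cycle is \emph{not} organized by Proposition~\ref{tropical trunk}. Trunk configurations satisfy $\varepsilon_K^{\mathbf{w}}=-1$, $\varepsilon_S^{\mathbf{w}}=\varepsilon_T^{\mathbf{w}}=1$, hence do not lie in $\mathcal{E}$ at all, and the $T$-recursion of Lemma~\ref{lemma: recursion for K,S,T} takes a different form on a trunk; the set $\mathcal{E}$ and the faithful $\overline{\mathcal{M}}$-action are defined on branch configurations only. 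The tools that make your six-step trace work are Proposition~\ref{tropical branch} together with Lemma~\ref{lemma: recursion for K,S,T}~(b): in a branch each $T$-move flips exactly one sign and rotates the roles $(K,S,T)\mapsto(T,K,S)$, so $\bar{T}^3$ is the global sign flip and $\bar{T}^6=\mathrm{id}$ with order exactly $6$. Likewise, faithfulness is not Definition~\ref{def: monoid action of M} but holds by the very definition of the equivalence relation $\sim$ defining $\overline{\mathcal{M}}$. Neither slip affects the validity of your strategy.
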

Based on this theorem, we can find a fractal structure appearing in the $\varepsilon$-pattern, see Theorem~\ref{thm: fractal structure in E-pattern}. Moreover, we can associate the mutation of tropical signs with an explicit transformation in $\mathbb{R}^2$, see Section~\ref{geometric characterization}.

This paper is organized as follows. In \Cref{pre}, we review some basic notions and properties about ordinary cluster algebras. In particular, we generalize the notions of integer exchange matrices, $C$-matrices and $G$-matrices to real ones, see \Cref{defCG}. To study the column sign-coherence of the real $C$-matrices, we introduce the tropical sign pattern, see \Cref{var-pattern}. Then, two exchange graphs of $C,G$-pattern are provided by \Cref{CG-EG}. In \Cref{Sec: sign-coh}, we focus on the tropical sign pattern for rank $3$ real cluster-cyclic exchange matrices. Based on this pattern, we show the column sign-coherence of $C$-matrices (\Cref{thm: sign-coherency}) and the monotonicity of $c$-vectors (\Cref{lem: inequalities}). In \Cref{Sec: quadratic}, we prove that the $c$-vectors associated with the cluster-cyclic exchange matrices are the solutions to some quadratic equations, see \Cref{main equation}. In \Cref{sec: proof of the exchange graph}, we show the $3$-regular tree structure of the exchange graphs of $C,G$-patterns associated with the rank $3$ real cluster-cyclic exchange matrices, see \Cref{thm: exchange graph C}. In \Cref{sec: Structure of tropical signs}, we exhibit the fractal structure of tropical signs for rank $3$ real cluster-cyclic exchange matrices (\Cref{tropical trunk}, \Cref{tropical branch} $\&$ \Cref{thm: fractal structure in E-pattern}) and the group structure of certain mutations (\Cref{thm: group structure of M}). In \Cref{geometric characterization}, we give a geometric model of tropical signs (\Cref{geo model graph}) and a cluster realization of the dihedral group $\mcD_6$ (\Cref{cluster real}).
\section*{Conventions}
We define some special matrices and notations as follows.
\begin{itemize}[leftmargin=2em]\itemsep=0pt
	\item Let $I_n \in \mathrm{M}_n(\mathbb{R})$ be the identity matrix. For any $k=1,2,\dots,n$, define a diagonal matrix $J_{k}=\mathrm{diag}(a_1,a_2,\dots,a_n)$ as follows: $a_i=1$ if $i\neq k$ and $a_k=-1$. 
	\item For any matrix $A=(a_{ij})_{m\times n} \in \mathrm{M}_{m \times n}(\mathbb{R})$ and $k \in \mathbb{Z}_{\geq 1}$, define $A^{\bullet k}=(b_{ij})_{m\times n}$ as follows: $b_{ik} = a_{ik}$ for any $i=1,\dots,m$ and $b_{ij}=0$ for $j\neq k$. Similarly, define $A^{k \bullet}=(c_{ij})_{m\times n}$ as $c_{kj}=a_{kj}$ for any $j=1,\dots,n$ and $c_{ij}=0$ for $i \neq k$. 
	\item For a real number $a \in \mathbb{R}$, define $[a]_{+}=\max(a,0)$. For any matrix $A=(a_{ij})_{m\times n}$, define $[A]_{+}=([a_{ij}]_{+})_{m\times n}$.
	\item For a finite set $\mfS$, we denote the number of the elements in $\mfS$ by $\#\mfS$.
\end{itemize}
\section{Preliminaries}\label{pre}

\subsection{Real exchange matrices and $C,G$-matrices}\

In this subsection,  we generalize and define the real exchange matrices and $C,G$-matrices, which are analogous to the integer ones. Beforehand, we define a notation for the mutation sequence.

Consider a sequence ${\bf w}=[k_1,\dots,k_r]$, where $k_i\in \{1,2,\dots,n\}$. A sequence ${\bf w}$ is said to be {\em reduced} if $k_i \neq k_{i+1}$ for any $i=1,2,\dots,r-1$. In convention, the empty sequence $\emptyset = [\ ]$ is also reduced. We write the set of all reduced sequences by $\mathcal{T}$. For any reduced sequence ${\bf w}=[k_1,\cdots,k_r] \in \mathcal{T}$, the {\em length} of $\bf w$ is defined by $|\mfw|=r$.
\par
For any $k\in\{1,\dots,n\}$, define ${\bf w}[k] \in \mathcal{T}$ as ${\bf w}[k]=[k_1,\dots,k_r,k]$ if $k_r\neq k$, and ${\bf w}[k]=[k_1,\dots,k_{r-1}]$ if $k=k_r$.  
For any two reduced sequences ${\bf w},{\bf u}=[l_1,\dots,l_s] \in \mathcal{T}$, we define the product as ${\bf w}{\bf u}={\bf w}[l_1][l_2]\cdots[l_s]$.
\begin{definition}[\emph{Real exchange matrix, $C,G$-matrices}] \label{defCG}
Let $B \in \mathrm{M}_n(\mathbb{R})$ be a \emph{real} skew-symmetrizable matrix.
For any $k\in\{1,2,\dots,n\}$, the {\em mutation} of $B$ in direction $k$ is 
\begin{align}
\mu_k(B)=(J_k+[-B]^{\bullet k}_{+})B(J_k+[B]^{k \bullet}_{+}).\label{exchange mut}
\end{align}
For any ${\bf w}=[k_1,\dots,k_r] \in \mathcal{T}$, we denote by $B^{\bf w}=\mu_{k_r}\cdots\mu_{k_1}(B)$ and call it a \emph{real exchange matrix}.
We define real {\em $C$-matrices} $C^{\bf w}$ and {\em $G$-matrices} $G^{\bf w}$ by the following recursion:
\begin{equation}\label{eq: recursion of C}
\begin{aligned}
C^{\emptyset}&=G^{\emptyset}=I,\\
C^{{\bf w}[k]}&=C^{\bf w}J_k+C^{\bf w}[B^{\bf w}]^{k \bullet}_{+} +[-C^{\bf w}]^{\bullet k}_{+}B^{\bf w},\\
G^{{\bf w}[k]}&=G^{\bf w}J_k+G^{\bf w}[-B^{\bf w}]^{\bullet k}_{+}-B^{\emptyset}[-C^{\bf w}]^{\bullet k}_{+}.
\end{aligned}
\end{equation}
The collections of all $C$-matrices and $G$-matrices are called the {\em $C$-pattern} and {\em $G$-pattern}, and we denote them by ${\bf C}(B)=\{C^{\bf w}\}_{{\bf w} \in \mathcal{T}}$ and ${\bf G}(B)=\{G^{\bf w}\}_{{\bf w} \in \mathcal{T}}$ respectively.
Each column (row) vector of $C^{\bf w}$ ($G^{\bf w}$), denoted by $\mfc_i^{\mfw}$ ($\mfg_i^{\mfw}$) is called a \emph{$c$-vector} (\emph{$g$-vector}). Then, we define by $C^{\bf w}=({\bf c}^{\bf w}_1, {\bf c}^{\bf w}_2,\dots, {\bf c}^{\bf w}_{n})$ and $G^{\bf w}=({\bf g}^{\bf w}_1, {\bf g}^{\bf w}_2,\dots, {\bf g}^{\bf w}_{n})$.
\end{definition}
We can easily check that the mutation of $B$ is an involution, that is, $\mu_k(\mu_k(B))=B$ holds for any skew-symmetrizable matrix $B$. We say that two skew-symmetrizable matrices $B,B'$ are {\em mutation-equivalent} if there exists ${\bf w} \in \mathcal{T}$ such that $B'=B^{\bf w}$. Since the mutation of $B$ is an involution, mutation-equivalence is an equivalent relation on the set of all skew-symmetrizable matrices.
\begin{remark}
The mutation formula \eqref{exchange mut} of exchange matrices and the recursion formula \eqref{eq: recursion of C} of integer $C,G$-matrices come from the classical cluster algebras, see \cite{NZ12} and \cite[Eq.(1.28) \& Eq.(1.37)]{Nak23}. It naturally motivates us to generalize and define the same recursion formulas for real ones.
\end{remark}
Note that real $C,G$-matrices also have the following \emph{first duality} relation.
\begin{proposition}[{\cite[Prop.~II.1.17]{Nak23}}]
Let $B \in \mathrm{M}_n(\mathbb{R})$ be skew-symmetrizable. Then, for any ${\bf w} \in \mathcal{T}$, the first duality holds:
\begin{equation}\label{eq: first duality}
G^{\bf w}B^{\bf w}=B^{\emptyset}C^{\bf w}.
\end{equation}
\end{proposition}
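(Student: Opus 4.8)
The plan is to prove \eqref{eq: first duality} by induction on the length $|\mfw|$, exactly as in the integer setting; since the recursions \eqref{eq: recursion of C} and the mutation rule \eqref{exchange mut} are formal matrix identities that nowhere invoke integrality, the argument transfers verbatim to real entries. For the base case $\mfw=\emptyset$ both sides equal $B^{\emptyset}$, because $G^{\emptyset}=C^{\emptyset}=I$. For the inductive step I would fix $\mfw$, assume $G^{\mfw}B^{\mfw}=B^{\emptyset}C^{\mfw}$, and deduce the same identity for $\mfw[k]$.

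To organize the computation I would abbreviate $B=B^{\mfw}$, $C=C^{\mfw}$, $G=G^{\mfw}$, $B_0=B^{\emptyset}$, and introduce the two mutation factors
\[
L=J_k+[-B]^{\bullet k}_{+},\qquad R=J_k+[B]^{k \bullet}_{+},
\]
so that \eqref{exchange mut} reads $B^{\mfw[k]}=LBR$. With this notation the $G$-recursion in \eqref{eq: recursion of C} collapses to $G^{\mfw[k]}=GL-B_0 N$, where $N:=[-C]^{\bullet k}_{+}$, while the $C$-recursion becomes $C^{\mfw[k]}=CR+NB$ after noting $CR=CJ_k+C[B]^{k \bullet}_{+}$. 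Expanding $G^{\mfw[k]}B^{\mfw[k]}=(GL-B_0N)(LBR)$ and substituting the inductive hypothesis $GB=B_0C$, the desired equality $G^{\mfw[k]}B^{\mfw[k]}=B_0C^{\mfw[k]}$ reduces to the single identity $N(LBR+B)=0$.

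The three ingredients I would isolate to close the argument are the following. First, $L^2=I$: this holds because the diagonal entry $b_{kk}$ of a skew-symmetrizable matrix vanishes, which forces $([-B]^{\bullet k}_{+})^2=0$ and makes the cross terms cancel; it is precisely this involutivity that turns $GL\cdot LBR$ into $GBR$. Second, the mutated matrix $LBR=\mu_k(B)$ has its $k$-th row equal to minus the $k$-th row of $B$, so the $k$-th row of $LBR+B$ is zero. Third, $N=[-C]^{\bullet k}_{+}$ is supported on its $k$-th column, hence left multiplication $N(\,\cdot\,)$ only reads off the $k$-th row of its argument; combined with the second point this yields $N(LBR+B)=0$ at once. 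The one genuinely computational step — and the place I would be most careful — is verifying the row-negation property, namely that the factored form $LBR$ in \eqref{exchange mut} reproduces the expected negation of the $k$-th row; this is a short direct check using $b_{kk}=0$ together with the fact that $R$ fixes the $k$-th row of $B$.
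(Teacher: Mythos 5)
Your proof is correct. Note that the paper itself does not prove this proposition at all — it is quoted from \cite[Prop.~II.1.17]{Nak23} — so your argument fills in a proof rather than parallels one; what you wrote is essentially the standard inductive argument from the integer-coefficient literature (NZ12/Nakanishi), and, as you observe, nothing in it uses integrality. Your packaging is clean and each of the three ingredients checks out: writing $P=[-B]^{\bullet k}_{+}$, one has $P_{kj}=0$ for all $j$ (column-$k$ support plus $[-b_{kk}]_{+}=0$), whence $P^2=0$, $J_kP=P$, $PJ_k=-P$, and so $L^2=I$; row $k$ of $L$ is $-\mathbf{e}_k^{\top}$ and right-multiplication by $R=J_k+[B]^{k\bullet}_{+}$ fixes row $k$ of $B$ (again using $b_{kk}=0$), so row $k$ of $LBR$ is the negative of row $k$ of $B$; and since $N=[-C]^{\bullet k}_{+}$ is supported on column $k$, $NM$ depends only on row $k$ of $M$, giving $N(LBR+B)=0$ and hence $-B^{\emptyset}NLBR=B^{\emptyset}NB$. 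Combined with the inductive hypothesis $GB=B^{\emptyset}C$, this yields
\begin{equation*}
G^{\mfw[k]}B^{\mfw[k]}=(GL-B^{\emptyset}N)LBR=GBR-B^{\emptyset}NLBR=B^{\emptyset}CR+B^{\emptyset}NB=B^{\emptyset}C^{\mfw[k]},
\end{equation*}
which is exactly what is needed; the base case is trivial, and every reduced sequence of length $r+1$ arises as $\mfw[k]$ from one of length $r$, so the induction is complete.
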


We introduce a partial order $\leq$ on $\mathbb{R}^n$ as follows: $(u_1,u_2,\dots,u_n)^{\top} \leq (v_1,v_2,\dots,v_n)^{\top}$ if and only if $u_i \leq v_i\ \textup{for any $i=1,2,\dots,n$}$. Since column sign-coherence \cite{GHKK18} is a key notion for classical $C$-matrices, it is natural to generalize this notion to real case. 
\begin{definition}[\emph{$\varepsilon$-pattern}] \label{var-pattern}
Let $B \in \mathrm{M}_n(\mathbb{R})$ be a real exchange matrix. Then, the $C$-matrix $C^{\bf w}$ is said to be {\em sign-coherent} if either ${\bf c}^{\bf w}_{i} \geq {\bf 0}$ or ${\bf c}^{\bf w}_i \leq {\bf 0}$ holds for each $i=1,2,\dots,n$. The $C$-pattern ${\bf C}(B)$ is called {\em sign-coherent} if every $C^{\bf w}$ is sign-coherent. When $C^{\bf w}$ is sign-coherent, the {\em tropical sign} $\varepsilon_{i}^{\bf w}$ of ${\bf c}^{\bf w}_{i}$ is defined as follows: $\varepsilon_{i}^{\bf w}=0$ if ${\bf c}_i^{\bf w} = {\bf 0}$, otherwise $\varepsilon_{i}^{\bf w} \in \{\pm 1\}$ such that $\varepsilon_{i}^{\bf w}{\bf c}_{i}^{\bf w} \geq {\bf 0}$. When ${\bf C}(B)$ is sign-coherent, we define the {\em $\varepsilon$-pattern} $\bm{\varepsilon}(B)$ as a collection of $n$-tuples of signs $\{(\varepsilon_1^{\bf w},\varepsilon_2^{\bf w},\dots,\varepsilon_n^{\bf w})\}_{{\bf w} \in \mathcal{T}}$.
\end{definition}
Here, we define the row sign-coherence of real $C$-matrices and we can similarly define the row sign-coherence of real $G$-matrices.
\begin{proposition}\label{unimodularity}
Let $B \in \mathrm{M}_n(\mathbb{R})$ be a skew-symmetrizable matrix.\\
\textup{$(a)$} For any $\varepsilon=\pm 1$, we have the following relations.
\begin{equation}\label{eq: epsilon expression of C, G}
\begin{aligned}
C^{{\bf w}[k]}&=C^{\bf w}J_k+C^{\bf w}[\varepsilon B^{\bf w}]^{k \bullet}_{+}+[-\varepsilon C^{\bf w}]^{\bullet k}_{+}B^{\bf w},\\
G^{{\bf w}[k]}&=G^{\bf w}J_k+G^{\bf w}[-\varepsilon B^{\bf w}]^{\bullet k}_{+}-B^{\emptyset}[-\varepsilon C^{\bf w}]^{\bullet k}_{+}. 
\end{aligned}
\end{equation}
In particular, if $C^{\bf w}$ is sign-coherent, then the recursion \eqref{eq: recursion of C} can be expressed as
\begin{equation}
\begin{aligned}\label{CG rec}
C^{{\bf w}[k]}&=C^{\bf w}(J_k+[\varepsilon_k^{\bf w}B^{\bf w}]^{k \bullet}_{+}),\\
G^{{\bf w}[k]}&=G^{\bf w}(J_k+[-\varepsilon_{k}^{\bf w} B^{\bf w}]^{\bullet k}_{+}).
\end{aligned}
\end{equation}
\textup{$(b)$} If ${\bf C}(B)$ is sign-coherent, we obtain the recursion for $c$-vectors and $g$-vectors as
\begin{equation}\label{eq: mutation of c,g-vectors}
\begin{aligned}
{\bf c}^{{\bf w}[k]}_{i}&=\begin{cases}
-{\bf c}^{\bf w}_{k} & i=k,\\
{\bf c}^{\bf w}_{i}+[\varepsilon_{k}^{\bf w}b^{\bf w}_{ki}]_{+}{\bf c}^{\bf w}_{k} & i \neq k,
\end{cases}\\
{\bf g}^{{\bf w}[k]}_i&=\begin{cases}
-{\bf g}^{\bf w}_{k} + \sum_{j=1}^{n}[-\varepsilon_k^{\bf w}b_{jk}^{\bf w}]_{+}{\bf g}^{\bf w}_{j} & i=k,\\
{\bf g}^{\bf w}_{i} & i \neq k.
\end{cases}
\end{aligned}
\end{equation}
\end{proposition}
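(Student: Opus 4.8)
The plan is to derive the whole proposition from the $\varepsilon$-independent recursions \eqref{eq: epsilon expression of C, G}, since their $\varepsilon=+1$ specialization is exactly the defining recursion \eqref{eq: recursion of C}. Thus it suffices to show that the right-hand sides of \eqref{eq: epsilon expression of C, G} are the same for $\varepsilon=+1$ and $\varepsilon=-1$. The only elementary tool needed is the scalar identity $[x]_{+}-[-x]_{+}=x$, applied entrywise, which yields $[A]^{k\bullet}_{+}-[-A]^{k\bullet}_{+}=A^{k\bullet}$ and $[A]^{\bullet k}_{+}-[-A]^{\bullet k}_{+}=A^{\bullet k}$ for every real matrix $A$.

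First I would treat the $C$-relation. Subtracting the $\varepsilon=-1$ right-hand side from the $\varepsilon=+1$ one, the $C^{\bf w}J_k$ terms cancel and, by the identities above, the remainder is $C^{\bf w}(B^{\bf w})^{k\bullet}-(C^{\bf w})^{\bullet k}B^{\bf w}$. Both of these matrices have $(i,j)$-entry $c^{\bf w}_{ik}b^{\bf w}_{kj}$ — each is the rank-one product routed through the index $k$ — so the difference vanishes and the $C$-line of \eqref{eq: epsilon expression of C, G} holds. For the $G$-relation the same subtraction leaves $B^{\emptyset}(C^{\bf w})^{\bullet k}-G^{\bf w}(B^{\bf w})^{\bullet k}$, which is the $k$-th column truncation of the first duality relation $G^{\bf w}B^{\bf w}=B^{\emptyset}C^{\bf w}$ in \eqref{eq: first duality} and is therefore zero. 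I expect this $G$-case to be the only genuinely non-formal step: it is where an external input (the duality) enters, whereas the $C$-case is pure bookkeeping; there is no serious obstacle beyond recognizing this reduction.

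Next, for the factorized form \eqref{CG rec} I would argue by induction on $|\mfw|$, carrying invertibility of $C^{\bf w}$ along, because \eqref{CG rec} can fail when $\varepsilon_k^{\bf w}=0$. The base case $C^{\emptyset}=I$ is invertible. Assuming $C^{\bf w}$ invertible, its column ${\bf c}_k^{\bf w}$ is nonzero, so sign-coherence gives $\varepsilon_k^{\bf w}\in\{\pm1\}$ with $\varepsilon_k^{\bf w}{\bf c}_k^{\bf w}\geq{\bf 0}$; hence the $k$-th column of $-\varepsilon_k^{\bf w}C^{\bf w}$ is $\leq{\bf 0}$ and $[-\varepsilon_k^{\bf w}C^{\bf w}]^{\bullet k}_{+}=0$. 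Substituting $\varepsilon=\varepsilon_k^{\bf w}$ into \eqref{eq: epsilon expression of C, G} then deletes the third summand of the $C$-line and the last summand of the $G$-line, and factoring $C^{\bf w}$, respectively $G^{\bf w}$, out on the left produces \eqref{CG rec}. Finally, the matrix $J_k+[\varepsilon_k^{\bf w}B^{\bf w}]^{k\bullet}_{+}$ agrees with the identity outside row $k$ and has $(k,k)$-entry $-1$, so its determinant is $-1$; thus $C^{{\bf w}[k]}$ is again invertible and the induction closes.

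It remains to read off the vector recursions \eqref{eq: mutation of c,g-vectors} from \eqref{CG rec} column by column, which I expect to be routine. The matrix $J_k+[\varepsilon_k^{\bf w}B^{\bf w}]^{k\bullet}_{+}$ has $k$-th column $-\mathbf{e}_k$ and, for $i\neq k$, $i$-th column $\mathbf{e}_i+[\varepsilon_k^{\bf w}b^{\bf w}_{ki}]_{+}\mathbf{e}_k$; right-multiplying $C^{\bf w}$ by it therefore sends ${\bf c}_k^{\bf w}\mapsto-{\bf c}_k^{\bf w}$ and ${\bf c}_i^{\bf w}\mapsto{\bf c}_i^{\bf w}+[\varepsilon_k^{\bf w}b^{\bf w}_{ki}]_{+}{\bf c}_k^{\bf w}$, which is the stated $c$-vector formula. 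Symmetrically, the matrix $J_k+[-\varepsilon_k^{\bf w}B^{\bf w}]^{\bullet k}_{+}$ has $i$-th column $\mathbf{e}_i$ for $i\neq k$ and $k$-th column $-\mathbf{e}_k+\sum_{j=1}^{n}[-\varepsilon_k^{\bf w}b^{\bf w}_{jk}]_{+}\mathbf{e}_j$, so right-multiplying $G^{\bf w}$ fixes ${\bf g}_i^{\bf w}$ for $i\neq k$ and sends ${\bf g}_k^{\bf w}\mapsto-{\bf g}_k^{\bf w}+\sum_{j=1}^{n}[-\varepsilon_k^{\bf w}b^{\bf w}_{jk}]_{+}{\bf g}_j^{\bf w}$, giving the $g$-vector formula.
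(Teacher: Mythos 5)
Your argument is correct in substance and is essentially the paper's own: the paper's proof of this proposition is a one-line appeal to the method of \cite[Eq.~(2.4)]{NZ12}, and what you write out --- the scalar identity $[x]_{+}-[-x]_{+}=x$ giving the $\varepsilon$-independence of the $C$-line, the first duality \eqref{eq: first duality} disposing of the $G$-line, then the substitution $\varepsilon=\varepsilon_{k}^{\bf w}$ and the column-by-column reading of \eqref{CG rec} --- is exactly that method with the details supplied. Your identification of the first duality as the only non-formal input for the $G$-line is accurate, and there is no circularity there, since \eqref{eq: first duality} is quoted from \cite{Nak23} and is proved by induction from the defining recursions alone, independently of this proposition.

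The one point to flag is your induction for \eqref{CG rec}. Its inductive step propagates invertibility of $C^{\bf w}$ only by invoking sign-coherence of $C^{\bf w}$ itself, so to reach a given ${\bf w}$ you need every prefix matrix $C^{\bf u}$ with ${\bf u}\leq{\bf w}$ to be sign-coherent; that is the pattern-wide hypothesis of part (b), whereas the ``in particular'' clause of part (a) assumes only that the single matrix $C^{\bf w}$ is sign-coherent. Under that literal hypothesis your induction cannot get started (invertibility of the intermediate matrices is not available), and indeed it is unclear that $\varepsilon_{k}^{\bf w}=0$ can be excluded at all from that hypothesis: sign-coherence of one matrix does not make it nonsingular, and if ${\bf c}_{k}^{\bf w}={\bf 0}$ then the defining recursion \eqref{eq: recursion of C} yields $G^{{\bf w}[k]}=G^{\bf w}\bigl(J_k+[-B^{\bf w}]^{\bullet k}_{+}\bigr)$ while the factorized form yields $G^{\bf w}J_k$, and these differ by $G^{\bf w}[-B^{\bf w}]^{\bullet k}_{+}$, which nothing forces to vanish. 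So what you have actually proved is \eqref{CG rec} under the hypothesis of part (b) --- which is the only setting in which the paper ever uses it, so nothing downstream is affected --- and you should state that restriction explicitly. This is a defect of the proposition's formulation (shared by the paper's citation-proof, and invisible in \cite{NZ12} where integrality and pattern-wide sign-coherence make the zero-column case vacuous) that your $\varepsilon_{k}^{\bf w}=0$ concern correctly exposes, rather than a repairable gap in your computation.
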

\begin{proof}
	The similar method as \cite[Eq.(2.4)]{NZ12} can be applied to \eqref{eq: epsilon expression of C, G}. Then, it directly implies to $(b)$. \end{proof}
The non-zero $c$-vector is called \emph{green} (resp. \emph{red}) if all its components are nonnegative (resp. nonpositive). 
\begin{definition}[\emph{$m$-reddening sequence}]
Let $B\in \mathrm{M}_n(\mathbb{R})$ be a skew-symmetrizable matrix with ${\bf C}(B)$ sign-coherent. Then, the tropical sign of each $c$-vector is $1$ or $-1$. Let $\mfw=[k_1,\dots,k_r]$ be a reduced mutation sequence.  For any $1\leq i\leq r$, we denote by $\mfw_{i}=[k_1,\dots,k_i]$ and $\mfw_{0}=\emptyset$. We say that $\bf w$ is a \emph{reddening sequence} of $B$ if all the tropical signs of $C^{\bf w}$ are $-1$. The reddening sequence $\bf w$ is an \emph{$m$-reddening sequence} if $\#\{\varepsilon^{\bf w_{i}}_{k_{i+1}}=-1|\, 0\leq i\leq r-1\}=m$. In particular, the $0$-reddening sequence is called a \emph{maximal green sequence}. 
\end{definition}
\begin{remark}\label{rmk sign coherent}
In \cite{GHKK18}, it was shown that every ${C}$-pattern corresponding to an {\em integer}  skew-symmetrizable matrix is sign-coherent. However, it is still an open problem for {\em real} skew-symmetrizable matrix.
\end{remark}

\begin{proposition}[{\cite[Prop.~4.2, Eq.(3.11)]{NZ12}}]\label{prop: fundamental properties under sign-coherency}
Consider a sign-coherent $C$-pattern $\{C^{\bf w}\}_{\mfw\in \mcT}$ corresponding to a {\em real} skew-symmetrizable matrix $B=(b_{ij})$. Then, we have the following claims.
\\
\textup{$(a)$} For any ${\bf w} \in \mathcal{T}$, we have $|C^{\bf w}|=|G^{\bf w}|=(-1)^{\pm |{\bf w}|}$. In particular, $\{{\bf c}_{i}^{\bf w} \mid i=1,\dots,n\}$ and $\{{\bf g}_{i}^{\bf w} \mid i=1,\dots,n\}$ is relatively a basis of the vector space $\mathbb{R}^n$.
\\
\textup{$(b)$} For any ${\bf w} \in \mathcal{T}$, the second duality relation holds:
\begin{equation}\label{eq: second duality}
D^{-1}(C^{\bf w})^{\top}DG^{\bf w}=I.
\end{equation}
\end{proposition}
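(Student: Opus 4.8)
The plan is to prove both parts by induction on the length $|\mathbf{w}|$, driven entirely by the sign-coherent recursion \eqref{CG rec} of \Cref{unimodularity}. The argument runs parallel to the integer case of \cite{NZ12}: the only structural inputs are sign-coherence (assumed here) and the skew-symmetrizer identity, both of which survive passage to real entries. Throughout I abbreviate $\varepsilon=\varepsilon_k^{\mathbf{w}}$, $B'=B^{\mathbf{w}}=(b'_{ij})$, and write $D=\diag(d_1,\dots,d_n)$ with $d_i>0$ for the symmetrizer, recalling that $\mu_k$ preserves $D$, so that $d_i b'_{ij}=-d_j b'_{ji}$ for all $i,j$; in particular $b'_{kk}=0$.

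For part $(a)$ I would first compute the determinant of the transition matrix $P:=J_k+[\varepsilon B']^{k\bullet}_+$ of \eqref{CG rec}. Every row of $P$ other than row $k$ equals a standard basis vector $e_i$, while the $(k,k)$-entry of row $k$ is $-1+[\varepsilon b'_{kk}]_+=-1$. Clearing the off-diagonal entries of row $k$ against the rows $e_j$ reduces $P$ to a diagonal matrix with a single $-1$, so $|P|=-1$; the identical computation applies to the $G$-transition matrix $Q:=J_k+[-\varepsilon B']^{\bullet k}_+$. Since $|C^{\emptyset}|=|G^{\emptyset}|=1$, induction yields $|C^{\mathbf{w}}|=|G^{\mathbf{w}}|=(-1)^{|\mathbf{w}|}$, which is the asserted $(-1)^{\pm|\mathbf{w}|}$. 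Non-vanishing of these determinants immediately makes the columns of $C^{\mathbf{w}}$ (resp. the rows of $G^{\mathbf{w}}$) a basis of $\mathbb{R}^n$.

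For part $(b)$ the base case $\mathbf{w}=\emptyset$ is just $D^{-1}ID I=I$. For the inductive step I substitute $C^{\mathbf{w}[k]}=C^{\mathbf{w}}P$ and $G^{\mathbf{w}[k]}=G^{\mathbf{w}}Q$ and invoke the inductive hypothesis in the form $(C^{\mathbf{w}})^{\top}DG^{\mathbf{w}}=D$, which collapses the expression to
\[
D^{-1}(C^{\mathbf{w}[k]})^{\top}DG^{\mathbf{w}[k]}=D^{-1}P^{\top}DQ .
\]
It therefore suffices to establish $P^{\top}DQ=D$. Writing $P^{\top}=J_k+X$ with $X=[\varepsilon (B')^{\top}]^{\bullet k}_+$ and $Q=J_k+Y$ with $Y=[-\varepsilon B']^{\bullet k}_+$, the expansion $(J_k+X)D(J_k+Y)$ has $J_kDJ_k=D$ and $XDY=0$ (the latter because $Y_{kk}=0$), so only the two cross terms survive, contributing the $(i,k)$-entry $d_i[-\varepsilon b'_{ik}]_+-d_k[\varepsilon b'_{ki}]_+$ for each $i\neq k$.

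The crux, and the step I expect to demand the most care, is the vanishing of this entry. I would split on the sign of $\varepsilon b'_{ik}$: when $\varepsilon b'_{ik}\geq 0$ the relation $\varepsilon b'_{ki}=-(d_i/d_k)\varepsilon b'_{ik}\leq 0$ forces both positive parts to vanish, whereas when $\varepsilon b'_{ik}<0$ the symmetrizer identity $d_i b'_{ik}=-d_k b'_{ki}$ gives $d_i[-\varepsilon b'_{ik}]_+=-d_i\varepsilon b'_{ik}=d_k[\varepsilon b'_{ki}]_+$. In either case the entry is zero, so $P^{\top}DQ=D$ and the induction closes. Everything apart from this sign bookkeeping in the positive parts is formal matrix algebra dictated by the shapes of $J_k$, $[\,\cdot\,]^{k\bullet}_+$, and $[\,\cdot\,]^{\bullet k}_+$, so I anticipate no further obstacle.
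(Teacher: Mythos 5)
Your proof is correct and takes essentially the approach the paper intends: the paper's own ``proof'' merely defers to the inductive arguments of \cite{NZ12} (Prop.~4.2 and Eq.~(3.11)) and \cite{Nak23}, and your argument is exactly that induction on $|\mathbf{w}|$ driven by the sign-coherent recursion \eqref{CG rec}, with the determinant computation $|J_k+[\varepsilon B^{\mathbf{w}}]^{k\bullet}_{+}|=-1$ for part $(a)$ and the identity $P^{\top}DQ=D$, verified entrywise via the skew-symmetrizer relation $d_i b^{\mathbf{w}}_{ik}=-d_k b^{\mathbf{w}}_{ki}$, for part $(b)$, carried out verbatim for real entries.
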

The proof can be referred to \cite[Eq.(3.11)]{NZ12} and \cite[Prop 2.3]{Nak23} similarly.

\subsection{Exchange graphs of $C,G$-patterns}\

In the study of ordinary cluster algebras, we sometimes focus on their combinatorial structures such as periodicity. This structure is summarized by a {\em cluster complex} and an {\em exchange graph}, which are established from cluster variables or seeds in the sense of \cite{FZ02,FZ03}. Here, we generalize the notion of exchange graphs by using $C$, $G$-patterns.
\begin{definition}[\emph{Two permutation actions}]
	Let $\sigma \in \mathfrak{S}_n$ and $A=(a_{ij})_{n\times n} \in \mathrm{M}_{n}(\mbR)$. We define two \emph{permutation actions} $\sigma(A), \tilde{\sigma}(A)$ as follows:
	\begin{enumerate}
		\item $\sigma(A) =(a_{\sigma^{-1}(i)\sigma^{-1}(j)})_{n\times n}$.
		\item  $\tilde{\sigma}(A)=(a_{i\sigma^{-1}(j)})_{n\times n}.$
	\end{enumerate}
\end{definition}
Now, we introduce the permutation matrix $P_\sigma$ associated with $\sigma\in \mathfrak{S}_n$ as follows:
\begin{align}
	P_\sigma=(p_{ij})_{n\times n},\ \text{where}\ p_{ij}=\delta_{i,\sigma^{-1}(j)}.
\end{align} Then, we can get the lemma as follows by direct calculation.
\begin{lemma}
	For any $A,B \in \mathrm{M}_{n}(\mathbb{R})$ and $\sigma \in \mathfrak{S}_n$, the following equalities hold:
	\begin{enumerate}
		\item $\sigma(A)=P_{\sigma}^{\top}AP_{\sigma},\ \tilde{\sigma}(A)=AP_{\sigma}$.
		\item $\sigma(AB)=\sigma(A)\sigma(B),\ \tilde{\sigma}(AB)=A\tilde{\sigma}(B).$
	\end{enumerate}
\end{lemma}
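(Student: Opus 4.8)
The plan is to verify both parts by a direct entrywise computation, using nothing beyond the Kronecker-delta description $(P_\sigma)_{ij}=\delta_{i,\sigma^{-1}(j)}$ and the ordinary rule for matrix products. No structural lemma is needed; everything reduces to tracking indices.

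First I would establish part (1). For the column action, the $(i,j)$-entry of $AP_\sigma$ is $\sum_{k}a_{ik}(P_\sigma)_{kj}=\sum_{k}a_{ik}\delta_{k,\sigma^{-1}(j)}=a_{i,\sigma^{-1}(j)}$, which is exactly the $(i,j)$-entry of $\tilde{\sigma}(A)$; hence $\tilde{\sigma}(A)=AP_\sigma$. For the two-sided action I would first record $(P_\sigma^{\top})_{ik}=(P_\sigma)_{ki}=\delta_{k,\sigma^{-1}(i)}$ and then compute
\[
(P_\sigma^{\top} A P_\sigma)_{ij}=\sum_{k}(P_\sigma^{\top})_{ik}(AP_\sigma)_{kj}=\sum_{k}\delta_{k,\sigma^{-1}(i)}\,a_{k,\sigma^{-1}(j)}=a_{\sigma^{-1}(i),\sigma^{-1}(j)},
\]
which is the $(i,j)$-entry of $\sigma(A)$; hence $\sigma(A)=P_\sigma^{\top} A P_\sigma$. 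This disposes of part (1) and already gives the first listed equality of part (2) almost for free: by part (1) and associativity, $\tilde{\sigma}(AB)=(AB)P_\sigma=A(BP_\sigma)=A\tilde{\sigma}(B)$.

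The only auxiliary fact required for the remaining equality is that $P_\sigma$ is orthogonal, i.e. $P_\sigma P_\sigma^{\top}=P_\sigma^{\top}P_\sigma=I$. I would check this from the deltas: $(P_\sigma P_\sigma^{\top})_{ij}=\sum_{k}\delta_{i,\sigma^{-1}(k)}\delta_{j,\sigma^{-1}(k)}=\delta_{ij}$, since the two factors are simultaneously nonzero only when $\sigma(i)=k=\sigma(j)$, forcing $i=j$. Granting this, I would insert $P_\sigma P_\sigma^{\top}=I$ in the middle of the product:
\[
\sigma(AB)=P_\sigma^{\top}(AB)P_\sigma=(P_\sigma^{\top}AP_\sigma)(P_\sigma^{\top}BP_\sigma)=\sigma(A)\sigma(B).
\]

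I do not expect any genuine obstacle; the entire argument is bookkeeping. The one point demanding care is the consistent handling of $\sigma$ versus $\sigma^{-1}$ inside the Kronecker deltas — in particular recognizing that $\delta_{i,\sigma^{-1}(k)}\neq 0\iff k=\sigma(i)$ — which is precisely what makes both the cancellation of the $\sigma^{-1}$-indices in part (1) and the orthogonality of $P_\sigma$ come out correctly.
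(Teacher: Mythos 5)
Your proof is correct, and it is precisely the ``direct calculation'' that the paper invokes without writing out: the paper states this lemma with no proof beyond the remark that it follows by direct computation with $P_\sigma$. Your entrywise verification of part (1), the orthogonality $P_\sigma P_\sigma^{\top}=I$, and the insertion of $P_\sigma P_\sigma^{\top}$ to get $\sigma(AB)=\sigma(A)\sigma(B)$ supply exactly the omitted bookkeeping, with the $\sigma$ versus $\sigma^{-1}$ indices handled correctly throughout.
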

The following proposition can be generalized directly from the integer $C,G$-matrices to real ones, which is called the \emph{compatibility relation}.
\begin{proposition}[{\cite[Eq.(3.17)]{Nak21}}]
	For any real $C$-matrix $C^{\mfw}$, $G$-matrix $G^{\mfw}$ and a permutation $\sigma\in \mathfrak{S}_n$, the compatibility relation hold:
	\begin{enumerate}
		\item $\mu_{\sigma(k)}(\tilde{\sigma}(C^{\mfw}))=\tilde{\sigma}(\mu_k(C^{\mfw}))$.
		\item $\mu_{\sigma(k)}(\tilde{\sigma}(G^{\mfw}))=\tilde{\sigma}(\mu_k(G^{\mfw}))$.
	\end{enumerate}
\end{proposition}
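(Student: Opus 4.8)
The plan is to prove both equalities by direct substitution into the defining recursion \eqref{eq: recursion of C}, interpreting $\mu_k(C^{\mfw})$ and $\mu_k(G^{\mfw})$ as the one-step mutation operators from that recursion (which also read off the current matrices $B^{\mfw}$, and for $G$ additionally $C^{\mfw}$ and the initial $B^{\emptyset}$). The entire argument rests on the matrix descriptions $\sigma(A)=P_\sigma^\top A P_\sigma$ and $\tilde{\sigma}(A)=AP_\sigma$ from the preceding lemma, together with $P_\sigma P_\sigma^\top = P_\sigma^\top P_\sigma = I$. Everything reduces to checking that each building block of the recursion commutes with relabeling under the correct index shift $k\mapsto \sigma(k)$.

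First I would record the auxiliary compatibilities. Since $J_k=I-2e_ke_k^\top$ and $P_\sigma^\top e_k=e_{\sigma(k)}$, one gets $\sigma(J_k)=P_\sigma^\top J_k P_\sigma = I-2e_{\sigma(k)}e_{\sigma(k)}^\top = J_{\sigma(k)}$. Because the positive part is applied entrywise, it commutes with both actions: $[\sigma(A)]_{+}=\sigma([A]_{+})$ and $[\tilde{\sigma}(A)]_{+}=\tilde{\sigma}([A]_{+})$. The key combinatorial facts are that the truncation operators shift their distinguished index, namely $(\sigma(A))^{\sigma(k)\bullet}=\sigma(A^{k\bullet})$, $(\sigma(A))^{\bullet\sigma(k)}=\sigma(A^{\bullet k})$, and $(\tilde{\sigma}(A))^{\bullet\sigma(k)}=\tilde{\sigma}(A^{\bullet k})$; each holds because right multiplication by $P_\sigma$ carries column $k$ to column $\sigma(k)$, while the two-sided conjugation additionally permutes the rows.

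For part $(1)$ I would substitute $\tilde{\sigma}(C^{\mfw})=C^{\mfw}P_\sigma$ and the relabeled exchange matrix $\sigma(B^{\mfw})$ into the $C$-recursion written in direction $\sigma(k)$. Using $\sigma(J_k)=J_{\sigma(k)}$, $[\sigma(B^{\mfw})]^{\sigma(k)\bullet}_{+}=P_\sigma^\top[B^{\mfw}]^{k\bullet}_{+}P_\sigma$, and $[-\tilde{\sigma}(C^{\mfw})]^{\bullet\sigma(k)}_{+}=[-C^{\mfw}]^{\bullet k}_{+}P_\sigma$, the internal factors $P_\sigma P_\sigma^\top=I$ cancel in each of the three summands and leave a common trailing $P_\sigma$, so that $\mu_{\sigma(k)}(\tilde{\sigma}(C^{\mfw}))=(C^{\mfw}J_k+C^{\mfw}[B^{\mfw}]^{k\bullet}_{+}+[-C^{\mfw}]^{\bullet k}_{+}B^{\mfw})P_\sigma=\tilde{\sigma}(\mu_k(C^{\mfw}))$. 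For part $(2)$ the same substitution into the $G$-recursion works verbatim, with the extra observation that the initial matrix $B^{\emptyset}$ enters the term $-B^{\emptyset}[-C^{\mfw}]^{\bullet k}_{+}$ as a left factor and is therefore untouched by the right multiplication by $P_\sigma$; no relabeling of $B^{\emptyset}$ is needed, and the three summands again collect a common trailing $P_\sigma$, giving $\mu_{\sigma(k)}(\tilde{\sigma}(G^{\mfw}))=\tilde{\sigma}(\mu_k(G^{\mfw}))$.

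The main obstacle is purely the bookkeeping of the two relabeling conventions: the exchange matrix transforms by the two-sided conjugation $\sigma$, whereas $C$- and $G$-matrices transform only by the right multiplication $\tilde{\sigma}$. One must confirm that the positive-part and truncation operations commute with relabeling with the correct index shift in every slot, and that the $B^{\emptyset}$ factor in the $G$-case is genuinely fixed. Once these compatibilities are verified, both equalities follow immediately, and—since only the defining recursion \eqref{eq: recursion of C} is used—they hold without any sign-coherence hypothesis.
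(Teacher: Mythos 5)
Your proof is correct. Note that the paper itself gives no argument for this proposition: it is quoted from \cite[Eq.(3.17)]{Nak21} with the remark that the integer-case statement generalizes directly to real entries, so your direct matrix computation is precisely the verification the paper leaves implicit. You handle the two genuinely delicate points correctly. First, for $\mu_{\sigma(k)}(\tilde{\sigma}(C^{\mfw}))$ to be meaningful one must attach to $\tilde{\sigma}(C^{\mfw})$ the conjugated exchange matrix $\sigma(B^{\mfw})=P_\sigma^{\top}B^{\mfw}P_\sigma$ (rows and columns both relabeled), while $C^{\mfw}$ and $G^{\mfw}$ carry only the column action $\tilde{\sigma}$; with that convention your identities $\sigma(J_k)=J_{\sigma(k)}$, $[\sigma(A)]^{\sigma(k)\bullet}_{+}=\sigma\bigl([A]^{k\bullet}_{+}\bigr)$ and $[\tilde{\sigma}(A)]^{\bullet\sigma(k)}_{+}=\tilde{\sigma}\bigl([A]^{\bullet k}_{+}\bigr)$ make each summand of \eqref{eq: recursion of C} absorb the internal factor $P_\sigma P_\sigma^{\top}=I$ and emit a single trailing $P_\sigma$, which is exactly $\tilde{\sigma}(\mu_k(\cdot))$. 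Second, in the $G$-recursion the initial matrix $B^{\emptyset}$ multiplies on the left, indexed by the fixed initial labels, and therefore must not be relabeled; your proof states this explicitly, and it is the one step where a careless symmetric treatment of the two actions would produce a wrong formula. Since only the defining recursion is used, the conclusion holds without any sign-coherence hypothesis, matching the generality in which the paper states the proposition.
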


For any two real $C$-matrices $C^{\mfw}$ and $C^{\mfu}$ in a same $C$-pattern, we define an equivalence relation: $C^{\mfw}\sim C^{\mfu}$ if there exists a permutation $\sigma\in \mathfrak{S}_n$ such that $C^{\mfu}=\tilde{\sigma}(C^{\mfw})$. Then, each equivalence class $[C^{\mfw}]$ is called an \emph{unlabeled $C$-matrix}. Similarly, in a $G$-pattern, we can define $[G^{\mfw}]$ to be an \emph{unlabeled $G$-matrix}.
\par
To introduce the exchange graph, we define the {\em quotient graph}.
Let $G=(V,E)$ be a graph with a vertex set $V$ and an edge set $E \subset V\times V$. Let $\sim$ be an equivalence relation on $V$. Then, we define the {\em quotient graph} $\tilde{G}=G/{\sim}$ as follows:
\begin{itemize}
\item The vertex set of $\tilde{G}$ is the equivalence class of $V/{\sim}$.
\item Two vertices $[v_1],[v_2] \in \tilde{G}$ are connected in $G/{\sim}$ if and only if there exist vertices $v'_1 \in [v_1]$ and $v'_2 \in [v_2]$ such that $v'_1$ and $v'_2$ are connected in $G$.
\end{itemize}
We may naturally identify $\mathcal{T}$ as the $n$-regular tree by the following rule: ${\bf w},{\bf u} \in \mathcal{T}$ are connected if and only if there exists $k=1,2,\dots,n$ such that ${\bf u}={\bf w}[k]$. (This is graph isomorphic to the $n$-regular tree $\mbT_n$.) Then, the exchange graph can be defined as follows.
\begin{definition}[\emph{Exchange graph of $C,G$-patterns}] \label{CG-EG} 
    The \emph{exchange graph} $\mfE \mfG(\mfC)$ (resp. $\mfE \mfG(\mfG)$) of a $C$-pattern (resp. $G$-pattern) is defined by the quotient graph $\mathcal{T}/{\sim}$, where for any ${\bf w},{\bf u} \in \mathcal{T}$,
    \begin{equation}
        {\bf w} \sim {\bf u} \Longleftrightarrow [C^{\bf w}]=[C^{\bf u}] \quad (\textup{resp}.\ [G^{\bf w}]=[G^{\bf u}]).
    \end{equation}
    We often replace each vertex $\overline{\bf w}$ of ${\bf EG}({\bf C})$ and ${\bf EG}({\bf G})$ by the corresponding unlabeled $C$-matrix $[C^{\bf w}]$ and the unlabeled $G$-matrix $[G^{\bf w}]$, respectively. By the above definition, this replacement is independent of the choice of ${\bf w}$.
\end{definition}

Lastly, we recall the relationship between the ordinary exchange graph of cluster algebras in \cite{FZ02} and these exchange graphs. In the ordinary (integer) cluster algebras, an exchange graph is defined by unlabeled {\em seeds} $\Sigma^{\bf w}=({\bf x}^{\bf w},B^{\bf w})$, where ${\bf x}^{\bf w}=(x_1^{\bf w},\dots,x_n^{\bf w})$ is a tuple of {\em cluster variables}. Note that in \cite{FZ02}, they are indexed by the vertices of an $n$-regular tree $\mathbb{T}_n$. By fixing one initial vertex $t_0 \in \mathbb{T}_{n}$, there is a natural one-to-one correspondence between $t\in \mbT_n$ and ${\bf w}\in \mcT$, and we write ${\bf x}_t={\bf x}^{\bf w}$. We define $\sigma{\Sigma}^{\bf w}=(\sigma{\bf x}^{\bf w},\sigma B^{\bf w})$, where $\sigma{\bf x}^{\bf w}=(x_{\sigma^{-1}(1)}^{\bf w},x_{\sigma^{-1}(2)}^{\bf w},\dots,x_{\sigma^{-1}(n)}^{\bf w})$. As the following theorem indicates, the periodicity of seeds coincides with that of $C$, $G$-matrices.
\begin{theorem}[{\cite[Synchronicity]{Nak21}}]\label{Synchronicity Nak}
	Let $\bf{\Sigma}$ be any cluster pattern with an {\em integer} initial exchange matrix $B \in \mathrm{M}_{n}(\mathbb{Z})$. Then, for any $\mfw,\mfu \in \mcT$ and $\sigma\in \mathfrak{S}_n$, the following are equivalent:
	\begin{enumerate}
		\item $\Sigma^{\mfw}=\sigma(\Sigma^{\mfu}).$
		\item $C^{\mfw}=\tilde{\sigma}(C^{\mfu}).$
		\item $G^{\mfw}=\tilde{\sigma}(G^{\mfu}).$
	\end{enumerate}
In particular, the ordinary exchange graph is graph isomorphic to that of $C$, $G$-patterns.
\end{theorem}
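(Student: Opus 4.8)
The plan is to route all three conditions through the \emph{separation formula} of Fomin--Zelevinsky together with the duality relations \eqref{eq: first duality}, \eqref{eq: second duality} and \eqref{eq: G to B,C formula} recorded above. Working in the cluster pattern with principal coefficients at the initial seed, each cluster variable is written as
\begin{equation*}
x_j^{\mfw}=\Big(\prod_{i=1}^{n}x_i^{\,g_{ij}^{\mfw}}\Big)\frac{F_j^{\mfw}(\hat y_1,\dots,\hat y_n)}{F_j^{\mfw}|_{\mathrm{Trop}}(y_1,\dots,y_n)},
\end{equation*}
where the exponents $g_{ij}^{\mfw}$ are the entries of $G^{\mfw}$ and the $F$-polynomial $F_j^{\mfw}$ is governed by the $Y$-pattern whose tropical data are exactly the $c$-vectors, i.e. the columns of $C^{\mfw}$. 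Equivalently, under the principal $\mbZ^n$-grading ($\deg x_i=\mathbf e_i$, $\deg y_j=-\mathbf b_j$), the cluster variable $x_j^{\mfw}$ is homogeneous of degree $\mathbf g_j^{\mfw}$. I would establish the purely linear-algebraic equivalence $(2)\Leftrightarrow(3)$ first, and then close the loop with $(1)\Rightarrow(3)$ and $(3)\Rightarrow(1)$.

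For $(2)\Leftrightarrow(3)$ I would use $C^{\mfw}=D^{-1}\{(G^{\mfw})^{\top}\}^{-1}D$ from \eqref{eq: G to B,C formula}. Substituting $\tilde\sigma(G^{\mfu})=G^{\mfu}P_\sigma$ and using $P_\sigma^{-1}=P_\sigma^{\top}$, a short transpose-and-invert computation gives $C^{\mfw}=C^{\mfu}\,D^{-1}P_\sigma D$. Since $C^{\mfu}$ is invertible, $C^{\mfw}=\tilde\sigma(C^{\mfu})=C^{\mfu}P_\sigma$ holds precisely when $P_\sigma D=DP_\sigma$, that is $d_{\sigma(i)}=d_i$. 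This commutation is exactly the assertion that $\sigma$ preserves the fixed skew-symmetrizer $D$; since $D$ is an invariant of the mutation class, any $\sigma$ realizing a seed symmetry in $(1)$ automatically has this property, and I would isolate this as a short preliminary lemma. Running the computation in reverse gives the opposite implication, so $(2)$ and $(3)$ are interchangeable from here on.

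For $(1)\Rightarrow(3)$, if $\Sigma^{\mfw}=\sigma(\Sigma^{\mfu})$ then $x_j^{\mfw}=x_{\sigma^{-1}(j)}^{\mfu}$ as elements of the field of fractions; comparing the principal $\mbZ^n$-degrees of the two equal cluster variables forces $\mathbf g_j^{\mfw}=\mathbf g_{\sigma^{-1}(j)}^{\mfu}$, i.e. $G^{\mfw}=\tilde\sigma(G^{\mfu})$. Conversely, for $(3)\Rightarrow(1)$, once $G^{\mfw}=\tilde\sigma(G^{\mfu})$ and hence (by the previous paragraph) $C^{\mfw}=\tilde\sigma(C^{\mfu})$, the $c$-vector data along the two mutation paths agree after applying $\sigma$, so the associated $F$-polynomials satisfy $F_j^{\mfw}=F_{\sigma^{-1}(j)}^{\mfu}$; feeding matching $g$-vectors and matching $F$-polynomials into the separation formula yields $x_j^{\mfw}=x_{\sigma^{-1}(j)}^{\mfu}$, while $B^{\mfw}=\sigma(B^{\mfu})$ follows from $B^{\mfw}=(G^{\mfw})^{-1}B^{\emptyset}D^{-1}\{(G^{\mfw})^{-1}\}^{\top}D$ in \eqref{eq: G to B,C formula}. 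Thus $\Sigma^{\mfw}=\sigma(\Sigma^{\mfu})$. Finally, the three equivalent conditions say exactly that seed-equality up to permutation holds iff $[C^{\mfw}]=[C^{\mfu}]$ iff $[G^{\mfw}]=[G^{\mfu}]$, so the three quotient graphs $\mathcal T/{\sim}$ coincide, giving the asserted graph isomorphism.

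The main obstacle is the implicit \emph{completeness} statement used in both directions: that the pair $(g\text{-vector},\,F\text{-polynomial})$ is a faithful invariant of a cluster variable, so equality of cluster variables can be both detected by and reconstructed from the $G$- and $C$-data alone. This rests on the linear independence of cluster monomials and the fact that distinct cluster variables carry distinct $g$-vectors in the principal-coefficient pattern, which are deep structural inputs from \cite{FZ07, GHKK18}. A secondary technical point is the synchronization $C^{\mfw}=\tilde\sigma(C^{\mfu})\Rightarrow F_j^{\mfw}=F_{\sigma^{-1}(j)}^{\mfu}$, which I would prove by induction on $|\mfw|$ from the $F$-polynomial mutation rule and the compatibility relation for $C$-matrices, verifying that the tropical sign $\varepsilon_k^{\mfw}$ controlling each $F$-polynomial mutation is transported correctly by $\sigma$.
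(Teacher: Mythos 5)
First, a remark on the comparison target: the paper does not prove this statement at all --- it is quoted verbatim from \cite{Nak21} --- so your attempt can only be measured against Nakanishi's argument, whose overall architecture (dualities, separation formula, GHKK-level inputs) your sketch does resemble. Within that architecture, however, there are two genuine gaps. The first concerns your ``purely linear-algebraic'' equivalence $(2)\Leftrightarrow(3)$: as you yourself compute, substituting $G^{\mfw}=G^{\mfu}P_{\sigma}$ into \eqref{eq: G to B,C formula} yields $C^{\mfw}=C^{\mfu}D^{-1}P_{\sigma}D$, and upgrading this to $C^{\mfw}=C^{\mfu}P_{\sigma}$ requires $d_{\sigma(i)}=d_i$. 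You justify this by a preliminary lemma about ``any $\sigma$ realizing a seed symmetry in $(1)$'' --- but $(1)$ is not available at that point of your proof: your implication $(2)\Rightarrow(1)$ is supposed to factor through $(2)\Leftrightarrow(3)$, so invoking $(1)$ there is circular. Moreover ``$D$ is an invariant of the mutation class'' is not literally true (a skew-symmetrizer is unique only up to scaling, and only when $B$ is indecomposable). The correct repair in the integer case is an integrality argument you never give: $C^{\mfu},C^{\mfw}\in \mathrm{GL}_n(\mathbb{Z})$, so $D^{-1}P_{\sigma}D=(C^{\mfu})^{-1}C^{\mfw}$ is an integer matrix whose nonzero entries $d_{\sigma(i)}/d_i$ are positive integers with product $1$, hence all equal to $1$. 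This is exactly where integrality of $B$ is essential; for real $B$ the correction factor $D^{-1}P_{\sigma}D$ genuinely occurs, which is why the real case in this paper requires Conjecture~\ref{conjecture: for real entries} and Proposition~\ref{prop: two exchange graphs are the same}. The same unaddressed factor infects your derivation of $B^{\mfw}=\sigma(B^{\mfu})$ from \eqref{eq: G to B,C formula}.

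The second gap is more serious: the step $C^{\mfw}=\tilde{\sigma}(C^{\mfu})\Rightarrow F_j^{\mfw}=F^{\mfu}_{\sigma^{-1}(j)}$, which you call a ``secondary technical point'' provable ``by induction on $|\mfw|$ from the $F$-polynomial mutation rule,'' cannot be proved that way, and it is in fact the core of the theorem rather than a technicality. The $F$-polynomial recursion runs along a mutation path, but your hypothesis relates only the two endpoints; the sequences $\mfw$ and $\mfu$ are arbitrary, possibly of different lengths, and there is no correspondence between their prefixes on which an induction could run. Already the special case $\mfu=\emptyset$, $\sigma=\mathrm{id}$ --- i.e.\ ``$C^{\mfw}=I$ forces all $F_j^{\mfw}=1$'' --- is a nontrivial global statement equivalent to the hard direction of synchronicity, not something the one-step mutation rule yields. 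The workable route (and essentially the one in \cite{Nak21}) is the one you relegate to your ``main obstacle'' paragraph: use the GHKK-level fact that distinct cluster variables have distinct $g$-vectors to pass directly from $(3)$ to equality of the labeled clusters, and then recover the exchange matrix and coefficients; the matching of $F$-polynomials is then a consequence of seed equality, not an ingredient used to prove it.
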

As the above theorem indicates, in the ordinary (integer) cluster algebras, we do not have to care about the difference between $C$-matrices and $G$-matrices if we focus on their periodicity. However, by generalizing them to real cases, we have not known whether this is true in general. In \cite{AC25}, we showed it under the following assumptions.
\begin{conjecture}[{\cite[Conjecture 6.9]{AC25}}]\label{conjecture: for real entries}
For a given real skew-symmetrizable matrix $B$ with a skew-symmetrizer $D=\mathrm{diag}(d_1,\dots,d_n)$, suppose that ${\bf C}(B)$ is sign-coherent. Then, the following statements hold.
\\
\textup{($a$)} For any $B'$ which is mutation-equivalent to $B$, two $C$-patterns ${\bf C}(B')$ and ${\bf C}((B')^{\top})$ are sign-coherent.
\\
\textup{($b$)} For any $B'$ which is mutation-equivalent to $B$, consider its $C$-pattern ${\bf C}(B')=\{C^{\bf w}_{B'}\}_{{\bf w} \in \mathcal{T}}$. If its $c$-vector ${\bf c}_{i;B'}^{\bf w}$ is expressed as $\alpha {\bf e}_{j}$ for some $\alpha \in \mathbb{R}$ and $j=1,2,\dots,n$, we have $\alpha=\pm\sqrt{d_id_j^{-1}}$.
\end{conjecture}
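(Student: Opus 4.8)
The plan is to derive $(a)$ and $(b)$ directly from the standing hypothesis that $\mathbf{C}(B)$ is sign-coherent, keeping $n$ and $B$ arbitrary. I would first split $(a)$ into two preservation statements: (i) if $\mathbf{C}(B)$ is sign-coherent then so is $\mathbf{C}(\mu_k(B))$ for every direction $k$; and (ii) if $\mathbf{C}(B)$ is sign-coherent then so is $\mathbf{C}(B^{\top})$. Iterating (i) along the reduced sequence $\mfv$ with $B'=B^{\mfv}$ gives sign-coherence of $\mathbf{C}(B')$, and then (ii) applied to $B'$ gives that of $\mathbf{C}((B')^{\top})$, which is exactly the assertion of $(a)$. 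For (ii) I would invoke the tropical $B\leftrightarrow B^{\top}$ duality, which interchanges the roles of the $C$- and $G$-patterns: via the second duality \eqref{eq: second duality} and the identity \eqref{eq: G to B,C formula}, the column sign-coherence of $C^{\mfw}_{B}$ translates into the row sign-coherence of $G^{\mfw}_{B}$, and the latter is precisely the column sign-coherence of the $C$-matrices attached to $B^{\top}$.

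The substantive step is (i), where I would argue geometrically rather than through a matrix identity. Under sign-coherence the vectors $\mathbf{g}^{\mfw}_1,\dots,\mathbf{g}^{\mfw}_n$ span a simplicial cone for each $\mfw$, and these $G$-cones together with their faces assemble into a genuine (interior-disjoint) fan; conversely this fan property is equivalent to sign-coherence, because the $c$-vectors are then forced to be the inward normals of the maximal faces, in the tropical-duality picture of \cite{NZ12,Nak23}. I would then establish that the tropical mutation in direction $k$—the piecewise-linear map governed, via \eqref{eq: mutation of c,g-vectors}, by the single tropical sign $\varepsilon^{\mfw}_k$—gives an incidence-preserving bijection from the $G$-fan of $B$ onto the $G$-fan of $\mu_k(B)$ that is linear on each maximal cone. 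Such a map sends a genuine fan to a genuine fan, so sign-coherence of $\mathbf{C}(\mu_k(B))$ follows. The naive multiplicative shortcut $C^{[k]\mfu}_{B}=C^{[k]}_{B}\,C^{\mfu}_{\mu_k(B)}$ is false for real entries—the tropical sign in direction $k$ flips across the re-rooting edge—which is exactly why this fan reformulation is needed.

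For $(b)$, granting $(a)$ so that every pattern in sight is sign-coherent, I would produce a symmetric bilinear form that is preserved along the $c$-vector dynamics. Choosing a quasi-Cartan companion $\tilde{A}$ of $B'$—a symmetric matrix with $D\tilde{A}$ symmetric and diagonal $\tilde{A}_{ii}=d_i$—the goal is the congruence
\[
(C^{\mfw})^{\top}\tilde{A}\,C^{\mfw}=\tilde{A}^{\mfw},\qquad
(\tilde{A}^{\mfw})_{ii}=(\mathbf{c}^{\mfw}_{i;B'})^{\top}\tilde{A}\,\mathbf{c}^{\mfw}_{i;B'}=d_i,
\]
where $\tilde{A}^{\mfw}$ is a quasi-Cartan companion of $(B')^{\mfw}$ with the same diagonal $d_\bullet$ (the symmetrizer $D$ being mutation-invariant). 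This would be proved by induction on $|\mfw|$ from \eqref{eq: mutation of c,g-vectors}: the piecewise-linear $c$-mutation acts as a reflection for $\tilde{A}$, so each diagonal entry stays frozen at $d_i$. Reading off the diagonal, if $\mathbf{c}^{\mfw}_{i;B'}=\alpha\mathbf{e}_j$ then $\alpha^2\tilde{A}_{jj}=d_i$, i.e.\ $\alpha^2 d_j=d_i$, whence $\alpha=\pm\sqrt{d_id_j^{-1}}$.

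The hard part will be that the two engines above—the tropical linearization of $G$-fans for $(a)$ and the mutation-compatible companion with frozen diagonal for $(b)$—are delicate for real, non-integer entries and are in fact coupled: a globally consistent choice of the off-diagonal signs of the companions $\tilde{A}^{\mfw}$ is precisely the data carried by the tropical signs $\varepsilon^{\mfw}$, so one cannot set up the congruence without simultaneously establishing the sign-coherent recursion that governs $\varepsilon$. I expect this interdependence, rather than any single computation, to be the main obstacle, and it is what confines an unconditional treatment to controlled families. In the present paper the tropical-sign recursion of \Cref{thm: recursion for tropical signs} and the explicit quasi-Cartan deformation behind \Cref{duality} furnish exactly these two ingredients and produce the quadratic equations of \Cref{main equation}, from which $(a)$ and $(b)$ are then read off in \Cref{prop: conjectures are true}.
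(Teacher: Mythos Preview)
The statement you are addressing is a \emph{conjecture} in the paper, not a theorem: the paper does not prove it for general $n$ and $B$. What the paper actually establishes is the special case of rank~$3$ real cluster-cyclic exchange matrices (Proposition~\ref{prop: conjectures are true}), and the proof there is short: part~($a$) follows from Theorem~\ref{thm: sign-coherency} because every matrix mutation-equivalent to a cluster-cyclic $B$, as well as its transpose, is again cluster-cyclic of rank~$3$; part~($b$) follows by substituting $\mathbf{c}^{\mfw}_i=\alpha\mathbf{e}_j$ into the quadratic identity of Theorem~\ref{main equation}, which collapses to $d_j\alpha^2=d_i$. You do identify this route correctly in your final paragraph.

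Your general plan, however, has a genuine circularity in step~(i). You want to pass from sign-coherence of $\mathbf{C}(B)$ to sign-coherence of $\mathbf{C}(\mu_k(B))$ by arguing that the $G$-cones of $B$ form a fan and that the piecewise-linear tropical mutation carries this fan to the $G$-fan of $\mu_k(B)$. But the tropical mutation map bends along a hyperplane and is not globally linear, so it does not automatically send a fan to a fan; knowing that the image cones meet only along faces is essentially equivalent to knowing the target $C$-pattern is sign-coherent, which is what you are trying to prove. Likewise, the equivalence ``fan $\Leftrightarrow$ sign-coherence'' you invoke is established in the literature only under hypotheses you do not yet have on the $\mu_k(B)$ side. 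You acknowledge this coupling at the end, but that acknowledgment turns the proposal into a heuristic outline rather than a proof. The paper breaks the circularity only in the rank~$3$ cluster-cyclic case, and it does so not by a fan argument but by proving sign-coherence, the explicit tropical-sign recursion, and the monotonicity inequality \emph{simultaneously} in one induction (the joint proof of Theorems~\ref{thm: sign-coherency}, \ref{thm: recursion for tropical signs} and Lemma~\ref{lem: inequalities}); only after that induction is complete can the quasi-Cartan companion $\tilde{A}^{\mfw}$ be consistently defined and the congruence of Theorem~\ref{duality} verified.
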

\begin{proposition}[{\cite[Theorem 12.7]{AC25}}]\label{prop: two exchange graphs are the same}
Let $B$ be a real skew-symmetrizable matrix. Suppose that $B$ is sign-coherent and Conjecture~\ref{conjecture: for real entries} holds. Then, the two exchange graphs of the $C$-pattern and the $G$-pattern are graph isomorphic.
\end{proposition}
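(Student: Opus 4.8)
The plan is to prove the stronger statement that $\sim_C$ and $\sim_G$ are the \emph{same} equivalence relation on $\mcT$, so that $\mfE\mfG(\mfC)=\mcT/{\sim_C}$ and $\mfE\mfG(\mfG)=\mcT/{\sim_G}$ are literally the same quotient graph. Since both quotients carry the edge relation inherited from the common $n$-regular tree $\mcT$, it suffices to show, for all $\mfw,\mfu\in\mcT$, that $C^{\mfu}=\tilde{\sigma}(C^{\mfw})$ for some $\sigma\in\mathfrak{S}_n$ if and only if $G^{\mfu}=\tilde{\sigma}(G^{\mfw})$ for some $\sigma$; the identity map on $\mcT$ then descends to the required graph isomorphism.

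First I would convert a $C$-coincidence into a $G$-statement through the second duality. Using $G^{\mfw}=D^{-1}\{(C^{\mfw})^{-1}\}^{\top}D$ from \eqref{eq: G to B,C formula} together with $C^{\mfu}=C^{\mfw}P_{\sigma}$ and $(P_{\sigma}^{-1})^{\top}=P_{\sigma}$, a one-line computation gives
\[
(G^{\mfw})^{-1}G^{\mfu}=D^{-1}P_{\sigma}D ,
\]
so that $\mfg^{\mfu}_{j}=\lambda_{j}\,\mfg^{\mfw}_{\sigma^{-1}(j)}$ with $\lambda_{j}=d_{j}/d_{\sigma^{-1}(j)}>0$; the dual computation shows a $G$-coincidence forces $\mfc^{\mfu}_{j}=\lambda_{j}\,\mfc^{\mfw}_{\sigma^{-1}(j)}$. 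Hence everything reduces to proving that these positive scalars are trivial, i.e. that the permutation $\sigma$ realizing the coincidence satisfies $d_{j}=d_{\sigma^{-1}(j)}$. The base case $\mfw=\emptyset$ already displays the mechanism: there $C^{\mfu}=\tilde{\sigma}(I)=P_{\sigma}$, so each $c$-vector equals $1\cdot e_{\sigma^{-1}(j)}$, and Conjecture~\ref{conjecture: for real entries}$(b)$ forces $1=\pm\sqrt{d_{j}/d_{\sigma^{-1}(j)}}$, whence $\lambda_{j}=1$.

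To reach the general case I would transport the coincidence to the base point $B^{\mfw}$. Writing $\mfv=\mfw^{-1}\mfu$ (so that $\mfw\mfv=\mfu$ and $B^{\mfw\mfv}=(B^{\mfw})^{\mfv}$), and using that $\mathbf{C}(B^{\mfw})$ is sign-coherent by Conjecture~\ref{conjecture: for real entries}$(a)$, the key claim is that $C^{\mfu}=\tilde{\sigma}(C^{\mfw})$ forces $C^{\mfv}_{B^{\mfw}}=P_{\sigma}$, i.e. the return word read from $B^{\mfw}$ has every $c$-vector along a coordinate axis, $\mfc^{\mfv}_{j;B^{\mfw}}=e_{\sigma^{-1}(j)}$. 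Granting this, Conjecture~\ref{conjecture: for real entries}$(b)$ applied inside $\mathbf{C}(B^{\mfw})$ gives $1=\pm\sqrt{d_{j}/d_{\sigma^{-1}(j)}}$, hence $\lambda_{j}=1$ and $G^{\mfu}=\tilde{\sigma}(G^{\mfw})$. The converse is symmetric: a $G$-coincidence transports to $G^{\mfv}_{B^{\mfw}}=P_{\sigma}$, whose dual $C^{\mfv}_{B^{\mfw}}=D^{-1}P_{\sigma}D$ has columns $\lambda_{j}e_{\sigma^{-1}(j)}$, and part $(b)$ now reads $\lambda_{j}=\pm\sqrt{\lambda_{j}}$, again forcing $\lambda_{j}=1$ and $C^{\mfu}=\tilde{\sigma}(C^{\mfw})$. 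Together these yield $\sim_C=\sim_G$.

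The hard part will be the transport step. One cannot invoke a chain rule, because the $C$- and $G$-matrices are \emph{not} multiplicative under change of initial seed: the tropical signs in the recursion \eqref{CG rec} break the naive identity $G^{\mfw\mfv}_{B}=G^{\mfw}_{B}\,G^{\mfv}_{B^{\mfw}}$ already at the first red mutation. What makes the transport nonetheless valid is that a coincidence is highly non-generic; I would prove it by propagating the relabeling $\sigma$ along the two synchronized mutation paths via the compatibility relation $\mu_{\sigma(k)}(\tilde{\sigma}(C^{\mfw}))=\tilde{\sigma}(\mu_{k}(C^{\mfw}))$, using the sign-coherence of every mutation-equivalent exchange matrix supplied by Conjecture~\ref{conjecture: for real entries}$(a)$ to control the tropical signs, and thereby identifying the monomial matrix $(G^{\mfw})^{-1}G^{\mfu}=D^{-1}P_{\sigma}D$ with $G^{\mfv}_{B^{\mfw}}$. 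Carrying out this bookkeeping is the technical heart of the proof; once it is in place, a single application of part $(b)$ of the conjecture closes the argument.
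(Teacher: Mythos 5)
Your reduction is sound as far as it goes: the second duality \eqref{eq: G to B,C formula} does convert $C^{\mfu}=\tilde{\sigma}(C^{\mfw})$ into $(G^{\mfw})^{-1}G^{\mfu}=D^{-1}P_{\sigma}D$, so the proposition indeed comes down to showing that any permutation $\sigma$ realizing a coincidence satisfies $d_{\sigma^{-1}(j)}=d_{j}$, and Conjecture~\ref{conjecture: for real entries}~($b$) is the right tool to force this. Note, for calibration, that the present paper does not prove this proposition at all: it is imported from \cite[Theorem~12.7]{AC25}, so your attempt has to be judged on its own completeness.

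The genuine gap is the transport step, which you yourself flag as ``the technical heart'' and then do not carry out: the claim that $C^{\mfu}=\tilde{\sigma}(C^{\mfw})$ forces $C^{\mfv}_{B^{\mfw}}=P_{\sigma}$ for the return word $\mfv$. Nothing you invoke supports this. The compatibility relation $\mu_{\sigma(k)}(\tilde{\sigma}(C^{\mfw}))=\tilde{\sigma}(\mu_k(C^{\mfw}))$ concerns relabelling columns \emph{within a single pattern}; it says nothing about how the pattern ${\bf C}(B^{\mfw})$ based at $B^{\mfw}$ relates along $\mfv$ to the pattern ${\bf C}(B)$, which is exactly what the transport claim asserts. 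As you yourself observe, there is no chain rule $C^{\mfw\mfv}_{B}=C^{\mfw}_{B}C^{\mfv}_{B^{\mfw}}$, because the two patterns pass through different tropical signs at intermediate vertices, and knowing only that the two endpoints agree up to $\tilde{\sigma}$ gives no control over those intermediate signs. In the integer case the transport claim is rescued by synchronicity (\Cref{Synchronicity Nak}), i.e.\ by passing through seeds, which is unavailable for real $B$; so as written, the step you defer to ``bookkeeping'' is essentially as hard as the proposition itself. A workable route (and, as far as can be inferred, the one underlying \cite{AC25}) avoids rebasing altogether: a coincidence $C^{\mfu}=\tilde{\sigma}(C^{\mfw})$ makes each column of $G^{\mfu}$ a positive multiple of a column of $G^{\mfw}$, hence the corresponding $G$-cones coincide, and a rigidity statement for the resulting fan --- this is where Conjecture~\ref{conjecture: for real entries} enters --- pins the column scalars to $\sqrt{d_{j}/d_{\sigma^{-1}(j)}}$; comparing with the scalars $d_{j}/d_{\sigma^{-1}(j)}$ you derived from duality then yields $d_{j}=d_{\sigma^{-1}(j)}$. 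Without either that fan-rigidity input or an actual proof of your transport claim, the proposal is incomplete.
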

\begin{remark}
In the ordinary cluster algebras, another combinatorial structure is known called a {\em cluster-complex}. This can be realized as a {\em g-vector fan} by using $G$-matrices, see \cite{Rea14,AC25}. Under the assumption of Conjecture~\ref{conjecture: for real entries}, we can generalize $g$-vector fans to the real case. However, by considering real entries, this combinatorial structure is slightly different from the one of exchange graphs associated with the $C$-patterns and $G$-patterns.
\end{remark}

\section{Tropical signs corresponding to cluster-cyclic exchange matrices} \label{Sec: sign-coh}
For any matrix (or vector) $A=(a_{ij})$, we define $\mathrm{sign}(A)$ as the same size matrix whose entries are the signs of corresponding entries of $A$. Here, we focus on the $3\times 3$ real matrices as follows.
\begin{definition} \label{cc and ca}
A skew-symmetrizable matrix $B \in \mathrm{M}_3(\mathbb{R})$ is called {\em cyclic} if
\begin{equation}
\mathrm{sign}(B)=\pm\left(\begin{matrix}
0 & -1 & 1\\
1 & 0 & -1\\
-1 & 1 & 0
\end{matrix}\right).
\end{equation}
If every $B^{\bf w}$ (${\bf w} \in \mathcal{T}$) is cyclic, $B$ is said to be {\em cluster-cyclic}.
\end{definition}
If $B$ is cyclic and skew-symmetric, then it corresponds to a quiver with a directed $3$-cycle.
\par
We may check that, if $B$ is cluster-cyclic, then $\mathrm{sign}(B^{\bf w})=(-1)^{|{\bf w}|}\mathrm{sign}(B)$. The following fact is known.
\begin{proposition}[{\cite{BBH11, Sev12, Aka24}}]\label{prop: lowerbound of b}
Let $B=(b_{ij}) \in \mathrm{M}_3(\mathbb{R})$ be a cyclic exchange matrix. Then, the following two conditions are equivalent:
\begin{itemize}
\item $B$ is cluster-cyclic.
\item $|b_{ij}b_{ji}| \geq 4$ for any $i,j \in \{1,2,3\}$ with $i\neq j$ and
\begin{equation}
|b_{12}b_{21}|+|b_{23}b_{32}|+|b_{31}b_{13}|-|b_{12}b_{23}b_{31}| \leq 4.
\end{equation}
\end{itemize}
\end{proposition}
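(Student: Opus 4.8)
The plan is to reduce to the skew-symmetric case and recognise the mutation as a Markov/Vieta dynamical system. By the standard skew-symmetrizing reduction (cf.\ \cite{FZ03}), the signs $\mathrm{sign}(b_{ij}^{\mfw})$ and the products $|b_{ij}^{\mfw}b_{ji}^{\mfw}|$ depend only on the skew-symmetric representative, so I may encode the data of a cyclic $B$ by the three positive weights $p=\sqrt{|b_{12}b_{21}|}$, $q=\sqrt{|b_{23}b_{32}|}$, $r=\sqrt{|b_{13}b_{31}|}$. A short computation gives $pqr=|b_{12}b_{23}b_{31}|$, so the two displayed conditions read exactly $p,q,r\ge 2$ and $\Delta:=p^2+q^2+r^2-pqr\le 4$. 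Computing the mutation rule \eqref{exchange mut} on a cyclic matrix shows that, while one remains in the cyclic regime, a mutation at a vertex replaces the weight of the opposite edge by its Vieta partner, e.g.\ mutation at $1$ sends $(p,q,r)\mapsto(p,\,pr-q,\,r)$, and the step keeps $B^{\mfw}$ cyclic precisely when the new weight is positive; one then checks directly that $\Delta$ is invariant under all three such moves. Thus $B$ is cluster-cyclic iff no admissible sequence of Vieta moves ever produces a nonpositive weight.

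For sufficiency I would prove that the region $R=\{(p,q,r):p,q,r\ge 2,\ \Delta\le 4\}$ is forward invariant under the three moves; since the weights in $R$ are positive, this forces every $B^{\mfw}$ to be cyclic. The one nontrivial point is that the mutated weight stays $\ge 2$. Writing the roots of $x^2-prx+(p^2+r^2-4)$ as $q_{\pm}=\tfrac12\bigl(pr\pm\sqrt{(p^2-4)(r^2-4)}\bigr)$ (real since $p,r\ge 2$), the hypothesis $\Delta\le 4$ is equivalent to $q\in[q_-,q_+]$, hence its reflection $pr-q$ also lies in $[q_-,q_+]$; and $q_-\ge 2$ follows from the identity $(pr-4)^2-(p^2-4)(r^2-4)=4(p-r)^2\ge 0$. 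The other two moves are handled identically by symmetry.

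For necessity I would argue in two steps. First, the two mutations at a fixed pair $\{i,j\}$ leave the weight $\kappa$ of the edge $(i,j)$ fixed and drive the two remaining weights through a sequence satisfying the linear recurrence $w_{n+1}=\kappa w_n-w_{n-1}$; if $B$ is cluster-cyclic all $w_n>0$ for every $n\in\mathbb{Z}$, and since $\kappa<2$ makes the characteristic roots complex of modulus $1$ (so $w_n$ oscillates and changes sign), this forces $\kappa\ge 2$. Doing this for each edge, and noting each reachable matrix is again cluster-cyclic, shows that \emph{every} reachable triple has all weights $\ge 2$. Consequently each Vieta partner is $\ge 2$, so the product relation $p(qr-p)=q^2+r^2-\Delta\ge 4$, together with its two cyclic variants, holds at every reachable triple. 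Finally I would pass to a triple minimizing $p+q+r$ along the orbit: there the move on the largest coordinate cannot decrease the sum, giving $qr\ge 2p$ with $p\ge q\ge r$, and a one-line optimization of $\Delta$ under $p\ge q\ge r$ and $qr\ge 2p$ yields $\Delta\le 4$ (equality only at $(2,2,2)$). Invariance of $\Delta$ transports this bound back to the initial triple.

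The step I expect to be the main obstacle is this last minimization in the \emph{real} setting: unlike the integer Markov tree, the orbit need not be discrete, so a sum-minimizing triple need not literally occur in the orbit. I would resolve this by compactness: the Vieta maps extend continuously to the closure of the orbit and still preserve $\Delta$, and on the compact slice $\{\text{weights}\ge 2,\ \Delta=\Delta_0,\ p+q+r\le m+1\}$ the sum attains its minimum, so the non-decreasing inequalities hold at that minimizer and the optimization goes through. The auxiliary facts that must be pinned down carefully are that cluster-cyclicity is exactly equivalent to ``every reachable weight is $\ge 2$'', and that the scalar recurrence $w_{n+1}=\kappa w_n-w_{n-1}$ faithfully records the edge weights produced by the two-vertex (dihedral) subsystem.
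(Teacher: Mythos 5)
The first thing to note is that the paper does not prove this proposition at all: it is imported verbatim from the cited references [BBH11, Sev12, Aka24] (the last of which is literally titled ``Cluster-cyclic condition \dots via Markov constant''), so the only meaningful comparison is with those works, and the strategy you propose --- the Markov constant --- is exactly theirs. Judged on its own merits, your proof is correct. The reduction to the skew-symmetric representative $\mathrm{Sk}(B)=D^{1/2}BD^{-1/2}$ is legitimate, since mutation commutes with $\mathrm{Sk}$ and both conditions in the statement depend only on the quantities $|b_{ij}b_{ji}|$; a direct computation confirms that mutation of a cyclic matrix is the Vieta move $(p,q,r)\mapsto(p,\,pr-q,\,r)$, that cyclicity is preserved exactly when the new entry is positive, and that $\Delta=p^2+q^2+r^2-pqr$ is invariant. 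Your sufficiency argument is complete: $\Delta\le 4$ is equivalent to $q\in[q_-,q_+]$, the reflection $q\mapsto pr-q$ preserves that interval, and $q_-\ge 2$ follows from $(pr-4)^2-(p^2-4)(r^2-4)=4(p-r)^2$. Your necessity argument is also sound: cluster-cyclicity forces the interleaved weights of the $\{i,j\}$-alternating mutations to solve $w_{n+1}=\kappa w_n-w_{n-1}$ with all $w_n>0$, which rules out $\kappa<2$ (a forward arithmetic progression of angles with step in $(0,\pi)$ cannot avoid the open half-circle where the cosine is negative), so every reachable weight is $\ge 2$; and your compactness repair of the orbit-minimization is valid, because the continuous Vieta maps send the orbit closure into itself, $\Delta$ is constant on that closure, and the slice $\{p,q,r\ge 2,\ p+q+r\le m+1\}$ is compact, so a sum-minimizer exists at which $qr\ge 2p$, $pr\ge 2q$, $pq\ge 2r$ all hold. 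I also checked the final optimization: for $p\ge q\ge r\ge 2$ with $qr\ge 2p$, viewing $\Delta$ as a parabola in $p$, decreasing on $[q,qr/2]$, gives $\Delta\le q^2(2-r)+r^2$, which equals $4$ when $r=2$ and is at most $r^2(3-r)<4$ when $r>2$. Two cosmetic corrections: equality in this bound occurs along the whole family $(t,t,2)$, $t\ge 2$, not only at $(2,2,2)$ (harmless, since only the inequality is used), and the intermediate ``product relation'' $p(qr-p)\ge 4$ plays no role in your final step and can simply be deleted.
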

\subsection{Sign-coherence and the recursion for tropical signs}\

In this subsection, we show the sign-coherence  for rank $3$ real cluster-cyclic exchange matrices.
\begin{theorem}\label{thm: sign-coherency}
Every $C$-pattern corresponding to a {\em real} cluster-cyclic exchange matrix is sign-coherent.
\end{theorem}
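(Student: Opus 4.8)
The plan is to establish sign-coherence by induction on the length $|{\bf w}|$, run simultaneously with the tropical-sign recursion of \Cref{thm: recursion for tropical signs} and the monotonicity of $c$-vectors in \Cref{lem: inequalities}; none of the three can be proved in isolation, since sign-coherence of the next step is what makes the tropical signs well defined, while the magnitude control is what forces those signs to behave correctly. I would carry a single inductive hypothesis, for all $\bf w$ up to a given length: that $C^{\bf w}$ is sign-coherent (so each $\varepsilon_i^{\bf w}\in\{\pm1\}$, as the $c$-vectors form a basis by \Cref{prop: fundamental properties under sign-coherency}), that the structural dichotomy of part $(a)$ holds, and that a system of componentwise magnitude inequalities among $c_1^{\bf w},c_2^{\bf w},c_3^{\bf w}$ is satisfied. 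The base case $\bf w=\emptyset$ is trivial ($C^\emptyset=I$, $\varepsilon=(+,+,+)$), and the first mutation $\emptyset\to[k_1]$ merely negates one basis vector and adds nonnegative multiples of it to the others, so it is immediate.

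For the inductive step, fix $\bf w$ with last index $k$ and consider a forward mutation at $j\in\{1,2,3\}\setminus\{k\}$. By \Cref{unimodularity}$(b)$ this negates $c_j^{\bf w}$ and adds $[\varepsilon_j^{\bf w}b_{ji}^{\bf w}]_+\,c_j^{\bf w}$ to each $c_i^{\bf w}$ with $i\neq j$. Because $B^{\bf w}$ is cyclic, row $j$ has one positive and one negative off-diagonal entry, so \emph{exactly one} of these coefficients is nonzero and only a single $c$-vector is modified. When $j=t$ is the non-special direction, the inductive dichotomy forces the modified vector to carry the \emph{same} tropical sign as $c_t^{\bf w}$, so a sum of like-signed vectors stays sign-coherent and only $\varepsilon_t$ flips; this yields the second line of \eqref{intro: eq: recursion for tropical signs} directly. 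All the difficulty is therefore concentrated in the special direction $s$ of part $(a)$. A short sign computation (using skew-symmetrizability, $\mathrm{sign}(b_{sk}^{\bf w})=-\mathrm{sign}(b_{ks}^{\bf w})$, and the cyclic pattern in row $s$) shows that the coefficient toward $t$ vanishes, the coefficient toward $k$ equals $|b_{sk}^{\bf w}|>0$, and $\varepsilon_k^{\bf w}=-\varepsilon_s^{\bf w}$; thus the only nontrivially updated vector is
\[
c_k^{{\bf w}[s]}=c_k^{\bf w}+|b_{sk}^{\bf w}|\,c_s^{\bf w},
\]
a sum of two \emph{oppositely}-signed vectors.

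The crux is to prove that in this critical sum the scaled term $|b_{sk}^{\bf w}|\,c_s^{\bf w}$ dominates $c_k^{\bf w}$ \emph{in every coordinate at once}, so that all three entries flip to the common sign $\varepsilon_s^{\bf w}$; this gives both sign-coherence of $C^{{\bf w}[s]}$ and the flip $\varepsilon_k\mapsto-\varepsilon_k$ recorded in the first line of \eqref{intro: eq: recursion for tropical signs}. Such coordinatewise domination is exactly the content of the monotonicity inequalities, and the quantitative cluster-cyclic bound $|b_{ij}b_{ji}|\ge 4$ from \Cref{prop: lowerbound of b} is what supplies it: tracing $c_k^{\bf w}$ back to the mutation at $k$ that created it, the companion entry governing its size is tied to $b_{sk}^{\bf w}$ through a product $|b_{sk}^{\bf w}b_{ks}^{\bf w}|\ge 4$, which makes $|b_{sk}^{\bf w}|$ large enough to absorb $c_k^{\bf w}$ completely. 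After the flip I would recompute the signs of $B^{{\bf w}[s]}$ and $B^{{\bf w}[t]}$ (which are just $-\mathrm{sign}(B^{\bf w})$) to re-verify the dichotomy $(a)$ at both children, and finally propagate the magnitude inequalities to close the induction.

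The main obstacle is precisely this last propagation: the inequalities must be strong enough to force domination in the critical case, yet shaped so that they reproduce themselves after each mutation rather than degrading. Finding such a self-sustaining system in three coupled $c$-vectors—an estimate of Fibonacci / discrete-Gronwall type driven by the $\ge 4$ threshold—is the delicate core of the argument. It is also where the hypothesis genuinely bites: a purely combinatorial bookkeeping of signs cannot detect the $\ge 4$ threshold, and indeed sign-coherence fails for cluster-\emph{acyclic} real matrices (\Cref{counter}), so any correct proof must exploit the quantitative inequality and not merely the cyclic sign pattern.
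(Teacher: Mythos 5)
You have correctly reconstructed the architecture of the paper's proof: a simultaneous induction carrying sign-coherence, the dichotomy of \Cref{thm: recursion for tropical signs}~(a), and magnitude inequalities; the observation that mutation in the direction $t$ adds a like-signed multiple of ${\bf c}_t^{\bf w}$ (hence is harmless), while mutation in the direction $s$ produces the critical opposite-signed sum ${\bf c}_k^{{\bf w}[s]}={\bf c}_k^{\bf w}+|b_{sk}^{\bf w}|\,{\bf c}_s^{\bf w}$; and the recognition that the cluster-cyclic bound $|b_{sk}^{\bf w}b_{ks}^{\bf w}|\geq 4$ of \Cref{prop: lowerbound of b} must force the domination. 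However, there is a genuine gap exactly where you flag it yourself: the ``self-sustaining system of inequalities'' is never exhibited, and the substitute heuristic (tracing ${\bf c}_k^{\bf w}$ back to the mutation that created it so that a companion entry ``absorbs'' it) is not an argument. The paper's answer is the single inequality \eqref{eq: k,s inequality} of \Cref{lem: inequalities}~(a),
\begin{equation*}
\varepsilon_s^{\bf w}\bigl((|b^{\bf w}_{sk}b_{ks}^{\bf w}|-2)\,{\bf c}_{k}^{\bf w}+|b^{\bf w}_{sk}|\,{\bf c}_{s}^{\bf w}\bigr)\geq{\bf 0},
\end{equation*}
which, combined with $|b_{sk}^{\bf w}b_{ks}^{\bf w}|\geq 4$ and $\varepsilon_s^{\bf w}=-\varepsilon_k^{\bf w}$, gives
$\varepsilon_s^{\bf w}{\bf c}_k^{{\bf w}[s]}\geq(|b_{sk}^{\bf w}b_{ks}^{\bf w}|-3)\,\varepsilon_k^{\bf w}{\bf c}_k^{\bf w}\geq\varepsilon_k^{\bf w}{\bf c}_k^{\bf w}\geq{\bf 0}$: sign-coherence of the new column, the sign flip $\varepsilon_k^{{\bf w}[s]}=-\varepsilon_k^{\bf w}$, and the monotonicity \eqref{eq: monotonicity} all in one stroke.

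The deeper half of the gap is the propagation of this inequality to the children, which you defer but which is not routine bookkeeping. For the child ${\bf w}[t]$ it is immediate (all vectors involved are like-signed). For the critical child ${\bf w}[s]$, however, an exact cancellation gives
\begin{equation*}
(|b_{sk}^{\bf w}b_{ks}^{\bf w}|-2)\,{\bf c}_{s}^{{\bf w}[s]}+|b_{ks}^{\bf w}|\,{\bf c}_k^{{\bf w}[s]}
=2\,{\bf c}_s^{\bf w}+|b_{ks}^{\bf w}|\,{\bf c}_k^{\bf w},
\end{equation*}
and this combination is \emph{not} controlled by the inequality at ${\bf w}$: the roles of $k$ and $s$ are interchanged and the coefficient is $|b_{ks}^{\bf w}|$ rather than $|b_{sk}^{\bf w}|$. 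The paper certifies it by stepping \emph{backwards}: writing ${\bf w}'={\bf w}[k]$ for the sequence obtained by deleting the last index of ${\bf w}$, one has ${\bf c}_s^{{\bf w}'}={\bf c}_s^{\bf w}+|b_{ks}^{\bf w}|{\bf c}_k^{\bf w}$, and the monotonicity statement one level earlier, applied along ${\bf w}'\to{\bf w}'[k]={\bf w}$ together with a case split on $\varepsilon_s^{{\bf w}'}$, yields $\varepsilon_s^{\bf w}\bigl(2{\bf c}_s^{\bf w}+|b_{ks}^{\bf w}|{\bf c}_k^{\bf w}\bigr)\geq{\bf 0}$. This backward-looking use of the induction hypothesis is the idea missing from your sketch; it also forces the hypothesis to carry monotonicity at length $r-1$ while the other statements are carried at length $r$, so the base case is $r=2$ verified by hand (where the bound $\geq 4$ already enters), not the trivial cases $r\leq 1$ you propose. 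Without the explicit inequality and this propagation mechanism, your induction does not close.
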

Moreover, tropical signs may be obtained by the following recursion.
\begin{theorem}\label{thm: recursion for tropical signs}
Let $B \in \mathrm{M}_3(\mathbb{R})$ be a cluster-cyclic exchange matrix. Let ${\bf w} \in \mathcal{T}\backslash\{\emptyset\}$ and $k$ be the last index of ${\bf w}$.\\
$(a)$\ There is a unique $s \in \{1,2,3\}\backslash\{k\}$ such that
\begin{equation}\label{eq: choice of s}
\varepsilon_{s}^{\bf w}\mathrm{sign}(b_{ks}^{\bf w})=-1,\ \varepsilon_{k}^{\bf w} \neq \varepsilon_{s}^{\bf w}.
\end{equation}
$(b)$\ Let $s$ be the index defined by $(a)$ and $t \in \{1,2,3\}\backslash\{k,s\}$ be another index. Then, the following equalities hold:
\begin{equation}\label{eq: recursion for tropical signs}
\begin{aligned}
(\varepsilon_k^{{\bf w}[s]},\varepsilon_s^{{\bf w}[s]},\varepsilon_t^{{\bf w}[s]}) &= (-\varepsilon_k^{{\bf w}},-\varepsilon_s^{{\bf w}},\varepsilon_t^{{\bf w}}),\\
(\varepsilon_k^{{\bf w}[t]},\varepsilon_s^{{\bf w}[t]},\varepsilon_t^{{\bf w}[t]}) &= (\varepsilon_k^{{\bf w}},\varepsilon_s^{{\bf w}},-\varepsilon_t^{{\bf w}}).
\end{aligned}
\end{equation}
\end{theorem}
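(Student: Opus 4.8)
The plan is to prove \Cref{thm: recursion for tropical signs} jointly with the sign-coherence asserted in \Cref{thm: sign-coherency} and a monotonicity (domination) inequality, all by a single induction on $|{\bf w}|$. First I would record two structural facts: since $B$ is cluster-cyclic, $\mathrm{sign}(B^{\bf w})=(-1)^{|{\bf w}|}\mathrm{sign}(B)$, so each row of $B^{\bf w}$ has exactly one positive and one negative off-diagonal entry, and skew-symmetrizability gives $\mathrm{sign}(b^{\bf w}_{ij})=-\mathrm{sign}(b^{\bf w}_{ji})$. Using these I would reformulate part $(a)$: for $j\neq k$ the two conditions $\varepsilon^{\bf w}_j\mathrm{sign}(b^{\bf w}_{kj})=-1$ and $\varepsilon^{\bf w}_k\neq\varepsilon^{\bf w}_j$ are together equivalent to $\mathrm{sign}(b^{\bf w}_{kj})=\varepsilon^{\bf w}_k$ and $\varepsilon^{\bf w}_j=-\varepsilon^{\bf w}_k$. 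Exactly one non-$k$ index $s_0$ has $\mathrm{sign}(b^{\bf w}_{ks_0})=\varepsilon^{\bf w}_k$, so uniqueness in $(a)$ is automatic and the entire content of $(a)$ reduces to the single assertion $\varepsilon^{\bf w}_{s_0}=-\varepsilon^{\bf w}_k$. Note also that once $(a)$ holds, the cyclic pattern forces $\mathrm{sign}(b^{\bf w}_{tk})=\varepsilon^{\bf w}_k$ and $\mathrm{sign}(b^{\bf w}_{ts})=\varepsilon^{\bf w}_s=-\varepsilon^{\bf w}_k$ for the remaining index $t$; this auxiliary sign relation is what drives the easy half of the induction.

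The base case $|{\bf w}|=1$, i.e.\ ${\bf w}=[k]$, is a direct computation from \eqref{eq: recursion of C}: one gets $C^{[k]}=J_k+[B]^{k\bullet}_{+}$, whence ${\bf c}^{[k]}_k=-{\bf e}_k$ and ${\bf c}^{[k]}_j={\bf e}_j+[b_{kj}]_+{\bf e}_k$ for $j\neq k$, so $\varepsilon^{[k]}_k=-1$ and $\varepsilon^{[k]}_j=+1$; the reformulated $(a)$ then holds since $\varepsilon^{[k]}_{s_0}=+1=-\varepsilon^{[k]}_k$. For the inductive step I would use the $c$-vector recursion \eqref{eq: mutation of c,g-vectors} and split according to the mutation direction. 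The easy direction is the non-special index $t$: for $i\in\{k,s\}$ the coefficient $[\varepsilon^{\bf w}_t b^{\bf w}_{ti}]_+$ is nonzero only when $\mathrm{sign}(b^{\bf w}_{ti})=\varepsilon^{\bf w}_t$, and the auxiliary relation $\mathrm{sign}(b^{\bf w}_{ti})=\varepsilon^{\bf w}_i$ forces $\varepsilon^{\bf w}_i=\varepsilon^{\bf w}_t$ in that case, so the vector ${\bf c}^{\bf w}_t$ being added already carries the same tropical sign as ${\bf c}^{\bf w}_i$. Hence $C^{{\bf w}[t]}$ is automatically sign-coherent with signs $(\varepsilon^{\bf w}_k,\varepsilon^{\bf w}_s,-\varepsilon^{\bf w}_t)$, which is the second line of \eqref{eq: recursion for tropical signs}.

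The hard direction is mutation in the special index $s$. From $\varepsilon^{\bf w}_s=-\varepsilon^{\bf w}_k$ and $\mathrm{sign}(b^{\bf w}_{sk})=-\mathrm{sign}(b^{\bf w}_{ks})=-\varepsilon^{\bf w}_k=\varepsilon^{\bf w}_s$ one gets $[\varepsilon^{\bf w}_s b^{\bf w}_{sk}]_+=|b^{\bf w}_{sk}|$, while the coefficient attached to $t$ vanishes because $\mathrm{sign}(b^{\bf w}_{st})=-\varepsilon^{\bf w}_s$. Thus ${\bf c}^{{\bf w}[s]}_s=-{\bf c}^{\bf w}_s$ and ${\bf c}^{{\bf w}[s]}_t={\bf c}^{\bf w}_t$ are immediately sign-coherent with signs $-\varepsilon^{\bf w}_s$ and $\varepsilon^{\bf w}_t$, matching the first line of \eqref{eq: recursion for tropical signs}. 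The only genuine issue is ${\bf c}^{{\bf w}[s]}_k={\bf c}^{\bf w}_k+|b^{\bf w}_{sk}|\,{\bf c}^{\bf w}_s$, which mixes the sign-$\varepsilon^{\bf w}_k$ vector ${\bf c}^{\bf w}_k$ with a positive multiple of the sign-$(-\varepsilon^{\bf w}_k)$ vector ${\bf c}^{\bf w}_s$. For the predicted sign $\varepsilon^{{\bf w}[s]}_k=-\varepsilon^{\bf w}_k$ to hold, the ${\bf c}^{\bf w}_s$-term must dominate entrywise, i.e.\ I must establish the monotonicity inequality
\begin{equation*}
|b^{\bf w}_{sk}|\,(\varepsilon^{\bf w}_s{\bf c}^{\bf w}_s)\ \geq\ \varepsilon^{\bf w}_k{\bf c}^{\bf w}_k \qquad (\text{entrywise}),
\end{equation*}
both sides being nonnegative vectors.

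This domination inequality is the main obstacle, and I expect to prove it by carrying it (together with the companion inequalities controlling ${\bf c}^{\bf w}_t$) as part of the induction hypothesis, so that it reproduces itself along each mutation. The quantitative input that makes the chain close is the cluster-cyclic lower bound of \Cref{prop: lowerbound of b}, namely $|b^{\bf w}_{ij}b^{\bf w}_{ji}|\geq 4$; it is exactly this threshold that forces the scaled special $c$-vector to overtake ${\bf c}^{\bf w}_k$, in the same way that the bound $\geq 2$ governs the non-finite regime in rank $2$. Finally I would close the induction by checking that the two children inherit the reformulated part $(a)$: tracking the signs through \eqref{eq: recursion for tropical signs} and using $\mathrm{sign}(B^{{\bf w}[l]})=-\mathrm{sign}(B^{\bf w})$, I would identify the new special index at each child, verify the relation $\varepsilon_{s'}=-\varepsilon_{k'}$ there, and verify that the family of monotonicity inequalities propagates. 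The delicate point throughout is choosing the precise family of inequalities that is simultaneously strong enough to yield the entrywise domination above and stable under both mutation directions; pinning down that invariant, rather than any single estimate, is where the real work lies.
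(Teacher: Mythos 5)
Your plan has exactly the same architecture as the paper's actual proof: sign-coherence (\Cref{thm: sign-coherency}), the recursion of \Cref{thm: recursion for tropical signs}, and a family of entrywise inequalities are established by one simultaneous induction on $|{\bf w}|$; the $t$-direction is the easy case; the $s$-direction reduces to an entrywise domination of $\varepsilon_k^{\bf w}{\bf c}_k^{\bf w}$ by a multiple of $\varepsilon_s^{\bf w}{\bf c}_s^{\bf w}$; and the cluster-cyclic bound $|b_{ij}^{\bf w}b_{ji}^{\bf w}|\geq 4$ of \Cref{prop: lowerbound of b} is the quantitative engine. Your reformulation of part $(a)$, your base case, and your treatment of both mutation directions \emph{modulo} the domination inequality are all correct. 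The problem is the step you explicitly defer: ``pinning down that invariant'' is not a technical loose end but the entire content of the theorem, and the inequality you do write down, $|b_{sk}^{\bf w}|\varepsilon_s^{\bf w}{\bf c}_s^{\bf w}\geq\varepsilon_k^{\bf w}{\bf c}_k^{\bf w}$, provably does not self-propagate.

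To see this, mutate at $s$: by the recursion, the new special pair is $(k',s')=(s,k)$, with ${\bf c}_{k'}^{{\bf w}[s]}=-{\bf c}_s^{\bf w}$, ${\bf c}_{s'}^{{\bf w}[s]}={\bf c}_k^{\bf w}+|b_{sk}^{\bf w}|{\bf c}_s^{\bf w}$, $\varepsilon_{k'}^{{\bf w}[s]}=-\varepsilon_s^{\bf w}$, $\varepsilon_{s'}^{{\bf w}[s]}=-\varepsilon_k^{\bf w}$, and $|b_{s'k'}^{{\bf w}[s]}|=|b_{ks}^{\bf w}|$. Hence your inequality at ${\bf w}[s]$ reads
\begin{equation*}
\bigl(|b_{ks}^{\bf w}b_{sk}^{\bf w}|-1\bigr)\,\varepsilon_s^{\bf w}{\bf c}_s^{\bf w}\ \geq\ |b_{ks}^{\bf w}|\,\varepsilon_k^{\bf w}{\bf c}_k^{\bf w},
\end{equation*}
while the hypothesis at ${\bf w}$, multiplied by $|b_{ks}^{\bf w}|$, gives only $|b_{ks}^{\bf w}b_{sk}^{\bf w}|\,\varepsilon_s^{\bf w}{\bf c}_s^{\bf w}\geq |b_{ks}^{\bf w}|\,\varepsilon_k^{\bf w}{\bf c}_k^{\bf w}$: the deficit is one full unit of $\varepsilon_s^{\bf w}{\bf c}_s^{\bf w}\geq{\bf 0}$, and the implication breaks down when the hypothesis is near tight, so the inductive step is invalid as stated. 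What the paper carries instead is \Cref{lem: inequalities}~$(a)$, the strengthened domination \eqref{eq: k,s inequality}, namely $\varepsilon_s^{\bf w}\bigl((|b_{sk}^{\bf w}b_{ks}^{\bf w}|-2){\bf c}_k^{\bf w}+|b_{sk}^{\bf w}|{\bf c}_s^{\bf w}\bigr)\geq{\bf 0}$, i.e.\ domination with coefficient $|b_{sk}^{\bf w}b_{ks}^{\bf w}|-2\geq 2$ rather than $1$. This stronger form simultaneously yields the sign flip, the monotonicity statement \Cref{lem: inequalities}~$(b)$, and enough slack to reproduce itself, precisely because the threshold is $4$ and not $2$. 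Moreover, the paper does not propagate \eqref{eq: k,s inequality} into the $s$-child from the hypothesis at ${\bf w}$ alone: it performs a backward step, applying the monotonicity already established at level $r-1$ to the \emph{parent} sequence ${\bf w}'={\bf w}[k]$ and splitting on the tropical sign of the relevant column of $C^{{\bf w}'}$ --- a second idea absent from your outline. So what is missing is exactly the invariant and its propagation mechanism, which is to say, the proof.
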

Namely, if we focus on a rank $3$ real cluster-cyclic exchange matrix, its tropical signs may be controlled by certain rules, although this is difficult in general for higher rank. This simplified behavior of tropical signs is a key reason why we can extend the sign-coherent property for any rank $3$ {\em real} cluster-cyclic exchange matrix.
\par
By considering this recursion, we have the property as follows.
\begin{corollary}\label{sign-recursion}
Let $B \in \mathrm{M}_3(\mathbb{R})$ be a cluster-cyclic exchange matrix. Then, the $\varepsilon$-pattern corresponding to $B$ may be obtained recursively by (\ref{eq: recursion for tropical signs}) and the following initial conditions:
\begin{equation}\label{eq: initial conditions of tropical signs}
\varepsilon_{i}^{\emptyset}=1,\ \varepsilon_{i}^{[k]}=\begin{cases}
-1 & i=k,\\
1 & i \neq k.
\end{cases}
\end{equation}
Moreover, the $\varepsilon$-pattern corresponding to a cluster-cyclic exchange matrix $B$ is determined by $\mathrm{sign}(B)$. 
\end{corollary}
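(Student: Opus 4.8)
The plan is to read this corollary as an organized bookkeeping consequence of \Cref{thm: sign-coherency} and \Cref{thm: recursion for tropical signs}, carried out by induction on the length $|\mathbf{w}|$ along the tree structure of $\mathcal{T}$. By \Cref{thm: sign-coherency} every $C^{\mathbf{w}}$ is sign-coherent, so all tropical signs $\varepsilon_i^{\mathbf{w}}$ are well-defined; it then remains to show that they are pinned down by the stated initial data together with the forward recursion, and that only $\mathrm{sign}(B)$ enters the computation.

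First I would verify the initial conditions directly. From $C^{\emptyset}=I$ each $c$-vector is $\mathbf{e}_i \geq \mathbf{0}$, giving $\varepsilon_i^{\emptyset}=1$. For $\varepsilon_i^{[k]}$, since $\varepsilon_k^{\emptyset}=1$, the recursion \eqref{CG rec} yields $C^{[k]}=J_k+[B^{\emptyset}]^{k\bullet}_{+}$ (using $b_{kk}=0$), so $\mathbf{c}_k^{[k]}=-\mathbf{e}_k$ while $\mathbf{c}_j^{[k]}=\mathbf{e}_j+[b_{kj}]_{+}\mathbf{e}_k \geq \mathbf{0}$ for $j \neq k$. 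Hence $\varepsilon_k^{[k]}=-1$ and $\varepsilon_j^{[k]}=1$, matching \eqref{eq: initial conditions of tropical signs}. I would also record here that these values are forced \emph{independently of $B$}, since $[b_{kj}]_{+}\geq 0$ regardless of $\mathrm{sign}(b_{kj})$.

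Next I would run the induction. The key structural remark is that $\mathcal{T}$ is the $3$-regular tree and that any reduced sequence $\mathbf{w}'=[k_1,\dots,k_r]$ with $r \geq 2$ equals $\mathbf{w}[k_r]$, where $\mathbf{w}=[k_1,\dots,k_{r-1}]$ has last index $k_{r-1}$ and $k_r \neq k_{r-1}$ by reducedness; thus $k_r$ is one of the two indices $s,t \in \{1,2,3\}\backslash\{k_{r-1}\}$, so $\mathbf{w}'$ is exactly one of the two children $\mathbf{w}[s],\mathbf{w}[t]$ produced by \Cref{thm: recursion for tropical signs} applied to $\mathbf{w}$. Assuming $\varepsilon^{\mathbf{w}}$ is already known, part $(a)$ selects the unique $s$ (hence $t$), and part $(b)$ outputs $\varepsilon^{\mathbf{w}[s]}$ and $\varepsilon^{\mathbf{w}[t]}$ by prescribed sign flips; in particular $\varepsilon^{\mathbf{w}'}$ is determined. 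Since the base cases $r \leq 1$ were settled above, induction on $r$ shows the whole $\varepsilon$-pattern is recovered from \eqref{eq: recursion for tropical signs} and \eqref{eq: initial conditions of tropical signs}.

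Finally, for the dependence on $\mathrm{sign}(B)$, I would track the inputs of the recursion through the same induction. The initial data is universal and the flips in part $(b)$ are independent of $B$; the only $B$-dependent step is the selection of $s$ in $(a)$, which needs $\mathrm{sign}(b_{ks}^{\mathbf{w}})$. Because $B$ is cluster-cyclic, $\mathrm{sign}(B^{\mathbf{w}})=(-1)^{|\mathbf{w}|}\mathrm{sign}(B)$, so this quantity is a function of $\mathrm{sign}(B)$ and $|\mathbf{w}|$ alone, whence $\varepsilon^{\mathbf{w}}$ depends only on $\mathrm{sign}(B)$. I do not expect a serious obstacle here, as the substance already resides in the two cited theorems; the only delicate point is organizational, namely confirming that the children $\mathbf{w}[s],\mathbf{w}[t]$ in \Cref{thm: recursion for tropical signs} exhaust precisely the length-increasing extensions of $\mathbf{w}$, which is immediate from reducedness.
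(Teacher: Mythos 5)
Your proposal is correct and matches the paper's (implicit) argument: the paper presents this corollary as an immediate consequence of Theorem~\ref{thm: sign-coherency} and Theorem~\ref{thm: recursion for tropical signs}, exactly the induction along the tree $\mathcal{T}$ that you spell out, with the base cases computed from $C^{\emptyset}=I$ and the mutation recursion, and the $\mathrm{sign}(B)$-dependence following from $\mathrm{sign}(B^{\bf w})=(-1)^{|{\bf w}|}\mathrm{sign}(B)$ for cluster-cyclic $B$. Your write-up simply makes explicit the bookkeeping the paper leaves to the reader, including the correct observation that the two children $\mathbf{w}[s],\mathbf{w}[t]$ exhaust the length-increasing extensions of $\mathbf{w}$.
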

Moreover, by \Cref{thm: recursion for tropical signs}~$(a)$, we get the following corollary.
\begin{corollary}\label{two signs}
Consider the $\varepsilon$-pattern corresponding to a real cluster-cyclic exchange matrix $B$. Then, there is no ${\bf w} \in \mathcal{T}\backslash\{\emptyset\}$ such that
\begin{equation}
(\varepsilon_{1}^{\bf w},\varepsilon_{2}^{\bf w},\varepsilon_3^{\bf w})=(1,1,1)\ \text{or}\ (-1,-1,-1). 
\end{equation} In particular, $B$ has no reddening sequence and maximal green sequence.
\end{corollary}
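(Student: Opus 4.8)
The plan is to read this off directly from part $(a)$ of \Cref{thm: recursion for tropical signs}, which already carries all the content. Fix any $\mfw \in \mcT\backslash\{\emptyset\}$ and let $k$ be its last index. By \Cref{thm: recursion for tropical signs}~$(a)$ there exists an index $s \in \{1,2,3\}\backslash\{k\}$ with $\varepsilon_k^{\mfw} \neq \varepsilon_s^{\mfw}$. Hence the three tropical signs $\varepsilon_1^{\mfw}, \varepsilon_2^{\mfw}, \varepsilon_3^{\mfw}$ cannot all coincide, so in particular $(\varepsilon_1^{\mfw},\varepsilon_2^{\mfw},\varepsilon_3^{\mfw})$ equals neither $(1,1,1)$ nor $(-1,-1,-1)$. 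This settles the first assertion with essentially no computation.

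For the ``in particular'' clause I would unwind the definitions. A reduced sequence $\mfw$ is a reddening sequence exactly when $(\varepsilon_1^{\mfw},\varepsilon_2^{\mfw},\varepsilon_3^{\mfw}) = (-1,-1,-1)$. By the first part this is impossible for any nonempty $\mfw$, while for $\mfw = \emptyset$ the initial condition $\varepsilon_i^{\emptyset}=1$ recorded in \Cref{sign-recursion} gives $(1,1,1) \neq (-1,-1,-1)$. Thus no reddening sequence exists at all. Since a maximal green sequence is by definition a $0$-reddening sequence, hence in particular a reddening sequence, none can exist either, completing the proof.

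The only point demanding care is the boundary case $\mfw=\emptyset$: \Cref{thm: recursion for tropical signs}~$(a)$ presupposes a last index $k$ and so applies only to nonempty sequences. This is precisely why the corollary excludes $\emptyset$ in its first assertion, and why the empty sequence must be dispatched separately (through its initial signs) in the reddening-sequence argument. Beyond this bookkeeping there is no genuine obstacle: all the difficulty has been absorbed into establishing \Cref{thm: recursion for tropical signs}~$(a)$, whose existence statement for a ``sign-disagreeing'' index $s$ encodes exactly the cluster-cyclic combinatorics that rules out a monochromatic sign triple.
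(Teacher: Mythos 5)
Your proposal is correct and is essentially the paper's own argument: the paper derives this corollary directly from Theorem~\ref{thm: recursion for tropical signs}~$(a)$, whose condition $\varepsilon_k^{\bf w} \neq \varepsilon_s^{\bf w}$ immediately forbids a monochromatic sign triple for any nonempty ${\bf w}$. Your additional unwinding of the reddening/maximal-green-sequence definitions and the separate treatment of ${\bf w}=\emptyset$ is sound bookkeeping that the paper leaves implicit.
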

\begin{remark}\label{MGS}
	In \cite[Theorem 1.2 \& Theorem 1.4]{Sev14}, Seven proved that an integer cluster-cyclic exchange matrix has no reddening sequence or maximal green sequence by use of mutation rules. Here, our method is more general for any real cluster-cyclic case and we also exclude the existence of the tropical sign $(1,1,1)$ except the initial one.
\end{remark}
\begin{example}
Based on \Cref{sign-recursion}, for tropical signs of rank $3$ real cluster-cyclic exchange matrices, we can refer to \Cref{fig: example of tropical signs} as an example \footnote{For each matrix in Figure~\ref{fig: example of tropical signs}, the top $3\times 3$ block is $\mathrm{sign}(B^{\bf w})$ and the bottom row is $(\varepsilon_{1}^{\bf w},\varepsilon_2^{\bf w},\varepsilon_3^{\bf w})$.}
.
\begin{figure}[h]
\begin{tikzpicture}
\draw (0,0) node
{$\underset{\textup{initial matrix}}{\left(\begin{matrix}
0 & - & +\\
+ & 0 & -\\
- & + & 0\\\hline
+ & + & +
\end{matrix}\right)}$};
\draw (0,-3.5) node
{$\underset{k=1,\ s=3}{\left(\begin{matrix}
0 & + & -\\
- & 0 & +\\
+ & - & 0\\\hline
- & + & +
\end{matrix}\right)}$};
\draw (-3.5,-6.5) node
{$\underset{k=2,\ s=3}{\left(\begin{matrix}
0 & - & +\\
+ & 0 & -\\
- & + & 0\\\hline
- & - & +
\end{matrix}\right)}$};
\draw (3.5,-6.5) node
{$\underset{k=3,\ s=1}{\left(\begin{matrix}
0 & - & +\\
+ & 0 & -\\
- & + & 0\\\hline
+ & + & -
\end{matrix}\right)}$};
\draw (-5,-10) node
{$\underset{k=1,\ s=2}{\left(\begin{matrix}
0 & + & -\\
- & 0 & +\\
+ & - & 0\\\hline
+ & - & +
\end{matrix}\right)}$};
\draw (-2,-10) node
{$\underset{k=3,\ s=2}{\left(\begin{matrix}
0 & + & -\\
- & 0 & +\\
+ & - & 0\\\hline
- & + & -
\end{matrix}\right)}$};
\draw (2,-10) node
{$\underset{k=1,\ s=3}{\left(\begin{matrix}
0 & + & -\\
- & 0 & +\\
+ & - & 0\\\hline
- & + & +
\end{matrix}\right)}$};
\draw (5,-10) node
{$\underset{k=2,\ s=1}{\left(\begin{matrix}
0 & + & -\\
- & 0 & +\\
+ & - & 0\\\hline
+ & - & -
\end{matrix}\right)}$};
\draw[->] (0,-1.5)--(0,-1.75) node [right] {$1$}--(0,-2);
\draw[->] (-1.5,-4)--(-2.25,-4.5) node [above left] {$t=2$}--(-3,-5);
\draw[->] (1.5,-4)--(2.25,-4.5) node [above right] {$s=3$}--(3,-5);
\draw[->] (-4,-7.7)--(-4.5,-8.2) node [above left] {$t=1$}--(-5,-8.7);
\draw[->] (-3,-7.7)--(-2.5,-8.2) node [above right] {$s=3$}--(-2,-8.7);
\draw[->] (3,-7.7)--(2.5,-8.2) node [above left] {$s=1$}--(2,-8.7);
\draw[->] (4,-7.7)--(4.5,-8.2) node [above right] {$t=2$}--(5,-8.7);
\end{tikzpicture}
\caption{Tropical signs for real cluster-cyclic matrices}\label{fig: example of tropical signs}
\end{figure}
\end{example}
\subsection{Main proof and the monotonicity of $c$-vectors}\

Now, let us prove Theorem~\ref{thm: sign-coherency} and Theorem~\ref{thm: recursion for tropical signs}. For the proof, we need to show the following inequalities simultaneously.
\begin{lemma}\label{lem: inequalities}
Let $B \in \mathrm{M}_3(\mathbb{R})$ be a real cluster-cyclic exchange matrix. Let ${\bf w} \in \mathcal{T}\backslash\{\emptyset\}$. Set $k,s,t \in \{1,2,3\}$ as in Theorem~\ref{thm: recursion for tropical signs}~(b).
\\
(a) We have the following inequality:
\begin{equation}\label{eq: k,s inequality}
\varepsilon_s^{\bf w}((|b^{\bf w}_{sk}b_{ks}^{\bf w}| - 2){\bf c}_{k}^{\bf w}+|b^{\bf w}_{sk}|{\bf c}_{s}^{\bf w}) \geq {\bf 0}.
\end{equation}
(b)\ For any $k' \in \{1,2,3\} \backslash\{k\}$ and $i=1,2,3$, we have
\begin{equation}\label{eq: monotonicity}
\varepsilon_{i}^{\bf w}{\bf c}_i^{\bf w} \leq \varepsilon_{i}^{{\bf w}[k']}{\bf c}_{i}^{{\bf w}[k']}.
\end{equation}
Moreover, there exists $i\in \{1,2,3\}$ such that the equality does not hold, that is, the inequality is strict with at least one entry.
\end{lemma}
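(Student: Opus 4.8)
The plan is to prove \Cref{thm: sign-coherency}, \Cref{thm: recursion for tropical signs} and \Cref{lem: inequalities} simultaneously by strong induction on the length $|{\bf w}|$, since the three statements are genuinely interlocked: sign-coherence is needed even to define the tropical signs in the recursion, the recursion \eqref{eq: recursion for tropical signs} predicts exactly the signs that turn \Cref{lem: inequalities}(a) into a sign-coherence certificate, and that inequality is in turn what forces sign-coherence one step further. Throughout I would replace each $c$-vector by its nonnegative representative ${\bf d}_i^{\bf w}:=\varepsilon_i^{\bf w}{\bf c}_i^{\bf w}\ge{\bf 0}$. Using $\varepsilon_k^{\bf w}\neq\varepsilon_s^{\bf w}$, the inequality \eqref{eq: k,s inequality} then reads
\begin{equation*}
|b_{sk}^{\bf w}|\,{\bf d}_s^{\bf w}\ \ge\ (|b_{sk}^{\bf w}b_{ks}^{\bf w}|-2)\,{\bf d}_k^{\bf w},
\end{equation*}
while \eqref{eq: monotonicity} reads ${\bf d}_i^{\bf w}\le{\bf d}_i^{{\bf w}[k']}$, so that (b) asserts that the positive representatives grow weakly (and somewhere strictly) under mutation.

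For the base ${\bf w}=[k]$ I would compute directly from $C^{\emptyset}=I$, finding ${\bf c}_k^{[k]}=-{\bf e}_k$ and ${\bf c}_s^{[k]}={\bf e}_s+|b_{ks}^{[k]}|{\bf e}_k$, where $s$ is the unique index with $b_{ks}^{[k]}<0$; the left-hand side of \eqref{eq: k,s inequality} then collapses to $2{\bf e}_k+|b_{sk}^{[k]}|{\bf e}_s\ge{\bf 0}$, and existence and uniqueness of $s$ hold because each row of a cyclic sign matrix has exactly one negative off-diagonal entry. The inductive engine is the observation that, among the two forward directions $s,t$ from a sequence ${\bf w}$ of length $r$, mutation in direction $s$ is the one that combines the opposite-sign vectors ${\bf c}_k^{\bf w}$ and ${\bf c}_s^{\bf w}$: since $\varepsilon_s^{\bf w}\mathrm{sign}(b_{sk}^{\bf w})=+1$, the formula \eqref{eq: mutation of c,g-vectors} gives ${\bf c}_k^{{\bf w}[s]}={\bf c}_k^{\bf w}+|b_{sk}^{\bf w}|{\bf c}_s^{\bf w}$, hence
\begin{equation*}
{\bf d}_k^{{\bf w}[s]}=|b_{sk}^{\bf w}|\,{\bf d}_s^{\bf w}-{\bf d}_k^{\bf w}\ \ge\ (|b_{sk}^{\bf w}b_{ks}^{\bf w}|-3)\,{\bf d}_k^{\bf w}\ \ge\ {\bf d}_k^{\bf w}\ \ge\ {\bf 0},
\end{equation*}
where the first inequality is \eqref{eq: k,s inequality} at ${\bf w}$ and the second uses $|b_{sk}^{\bf w}b_{ks}^{\bf w}|\ge4$ from \Cref{prop: lowerbound of b}. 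This one computation simultaneously yields sign-coherence of ${\bf c}_k^{{\bf w}[s]}$ (with the sign $-\varepsilon_k^{\bf w}$ dictated by \eqref{eq: recursion for tropical signs}) and the monotonicity ${\bf d}_k^{\bf w}\le{\bf d}_k^{{\bf w}[s]}$; strictness follows because ${\bf d}_k^{{\bf w}[s]}={\bf d}_k^{\bf w}$ would force ${\bf d}_s^{\bf w}\parallel{\bf d}_k^{\bf w}$, contradicting the basis property in \Cref{prop: fundamental properties under sign-coherency}.

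With sign-coherence of both children ${\bf w}[s]$ and ${\bf w}[t]$ established, their tropical signs are read off directly, which gives \eqref{eq: recursion for tropical signs}; then, since $\mathrm{sign}(B^{{\bf w}[k']})=-\mathrm{sign}(B^{\bf w})$ is again cyclic, existence and uniqueness of the new special index at each child reduce to a finite compatibility check against \eqref{eq: choice of s} and \Cref{two signs}. The remaining new $c$-vectors (the one indexed by $t$ under direction $s$, and those indexed by $k,s$ under the ``safe'' direction $t$) are like-signed combinations once this sign bookkeeping shows that the relevant coefficients $[\varepsilon^{\bf w}_{j}b^{\bf w}_{ji}]_+$ vanish or carry the matching sign, so their sign-coherence and the corresponding cases of \eqref{eq: monotonicity} are immediate.

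The hard part will be propagating the quantitative inequality \Cref{lem: inequalities}(a) from ${\bf w}$ to the children. This forces me to substitute the exchange mutation \eqref{exchange mut} for the entries of $B^{{\bf w}[s]}$ (in particular the product $|b_{s's}^{{\bf w}[s]}b_{ss'}^{{\bf w}[s]}|$ for the new special index $s'\in\{k,t\}$), together with the identities ${\bf d}_s^{{\bf w}[s]}={\bf d}_s^{\bf w}$, ${\bf d}_k^{{\bf w}[s]}=|b_{sk}^{\bf w}|{\bf d}_s^{\bf w}-{\bf d}_k^{\bf w}$ and ${\bf d}_t^{{\bf w}[s]}={\bf d}_t^{\bf w}+[\varepsilon_s^{\bf w}b_{st}^{\bf w}]_+\varepsilon_t^{\bf w}\varepsilon_s^{\bf w}{\bf d}_s^{\bf w}$, and then to reduce the target inequality to one purely among ${\bf d}_k^{\bf w},{\bf d}_s^{\bf w},{\bf d}_t^{\bf w}$ that must be squeezed out of \eqref{eq: k,s inequality} at ${\bf w}$. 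I expect this to split into the cases $\varepsilon_t^{\bf w}=\varepsilon_s^{\bf w}$ and $\varepsilon_t^{\bf w}=\varepsilon_k^{\bf w}$, and to use not only the lower bounds $|b_{ij}b_{ji}|\ge4$ but also the cubic constraint of \Cref{prop: lowerbound of b}; ensuring that the delicate coefficient $(|b_{sk}b_{ks}|-2)$ regenerates after the $B$-mutation, rather than degrading, is the true crux of the argument.
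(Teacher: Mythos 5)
Your skeleton matches the paper's proof closely: the paper also runs Theorem~\ref{thm: sign-coherency}, Theorem~\ref{thm: recursion for tropical signs} and Lemma~\ref{lem: inequalities} as a single induction on $|{\bf w}|$, and its engine is exactly your direction-$s$ computation ${\bf d}_k^{{\bf w}[s]}=|b_{sk}^{\bf w}|{\bf d}_s^{\bf w}-{\bf d}_k^{\bf w}\ge(|b_{sk}^{\bf w}b_{ks}^{\bf w}|-3){\bf d}_k^{\bf w}\ge{\bf d}_k^{\bf w}\ge{\bf 0}$, with strictness from linear independence and the direction-$t$ cases handled as like-signed combinations. The genuine gap is that you never prove the inductive step for part (a): you explicitly defer it as ``the true crux,'' and your guesses about what it requires point the wrong way. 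There is no ``regeneration'' problem for the coefficient $(|b_{sk}^{\bf w}b_{ks}^{\bf w}|-2)$: at a child ${\bf w}[k']$ the new special pair is $(s',k')$ with $k'$ the mutation direction (indeed $s'=K({\bf w})$ when $k'=s$), so the entries $b_{s'k'},b_{k's'}$ lie in the mutated row/column and only flip sign — their product is unchanged. Nor is the cubic constraint of Proposition~\ref{prop: lowerbound of b} ever needed; the whole proof uses only $|b_{ij}^{\bf w}b_{ji}^{\bf w}|\ge 4$. After substitution, the direction-$t$ child's inequality collapses to a sum of two like-signed vectors (trivial), and the direction-$s$ child's inequality collapses to the single statement $2{\bf d}_s^{\bf w}\ge|b_{ks}^{\bf w}|{\bf d}_k^{\bf w}$, and \emph{this} is the step your plan leaves open.

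The paper closes it by a backward-looking trick your forward-only scheme does not anticipate: ${\bf c}_s^{\bf w}+|b_{ks}^{\bf w}|{\bf c}_k^{\bf w}$ is precisely ${\bf c}_s^{{\bf w}'}$ for the parent ${\bf w}'={\bf w}[k]$, so one can invoke the monotonicity statement one level back (this is why the paper's induction tracks $(4)_{r-1}$ rather than $(4)_r$) and split on $\varepsilon_s^{{\bf w}'}$: if $\varepsilon_s^{{\bf w}'}=1$ the monotonicity ${\varepsilon}_{s}^{{\bf w}'}{\bf c}_s^{{\bf w}'}\le\varepsilon_s^{\bf w}{\bf c}_s^{\bf w}$ gives the claim directly, and if $\varepsilon_s^{{\bf w}'}=-1$ one writes $2{\bf c}_s^{\bf w}+|b_{ks}^{\bf w}|{\bf c}_k^{\bf w}={\bf c}_s^{\bf w}+{\bf c}_s^{{\bf w}'}$ and uses ${\bf c}_s^{{\bf w}'}\le{\bf 0}$. (In fact a purely forward argument also works, and would salvage your plan: from \eqref{eq: k,s inequality} at ${\bf w}$, i.e.\ $|b_{sk}^{\bf w}|{\bf d}_s^{\bf w}\ge(p-2){\bf d}_k^{\bf w}$ with $p=|b_{sk}^{\bf w}b_{ks}^{\bf w}|$, multiplying by $|b_{ks}^{\bf w}|$ gives $p\,{\bf d}_s^{\bf w}\ge(p-2)|b_{ks}^{\bf w}|{\bf d}_k^{\bf w}$, and $p\le 2(p-2)$ for $p\ge 4$ yields $2{\bf d}_s^{\bf w}\ge|b_{ks}^{\bf w}|{\bf d}_k^{\bf w}$ — no case split, no cubic constraint.) Either route must actually be written down; as submitted, the proposal stops exactly where the real work of Lemma~\ref{lem: inequalities}(a) begins.
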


\begin{proof}[Proof of Theorem~\ref{thm: sign-coherency}, Theorem~\ref{thm: recursion for tropical signs}, and Lemma~\ref{lem: inequalities}]
For simplicity, we assume
\begin{equation}
B=\left(\begin{matrix}
0 & -w & y\\
z & 0 & -u\\
-v & x & 0
\end{matrix}\right),
\end{equation}
for some $x,y,z,u,v,w>0$ and set
\begin{equation}
B^{\bf w}=\left(\begin{matrix}
0 & -(-1)^{|{\bf w}|}w^{\bf w} & (-1)^{|{\bf w}|}y^{\bf w}\\
(-1)^{|{\bf w}|}z^{\bf w} & 0 & -(-1)^{|{\bf w}|}u^{\bf w}\\
-(-1)^{|{\bf w}|}v^{\bf w} & (-1)^{|{\bf w}|}x^{\bf w} & 0
\end{matrix}\right)
\end{equation}
with $x^{\bf w},y^{\bf w},z^{\bf w},u^{\bf w},v^{\bf w},w^{\bf w} > 0$.
By Proposition~\ref{prop: lowerbound of b}, we have $x^{\bf w}u^{\bf w},y^{\bf w}v^{\bf w},z^{\bf w}w^{\bf w} \geq 4$. 
For any $r \in \mathbb{Z}_{\geq 0}$, let $\mathcal{T}^{\leq r}=\{{\bf w} \in \mathcal{T}\mid |{\bf w}| \leq r\}$.
Define the statements
\begin{itemize}
\item[$(1)_r$] For any ${\bf w} \in \mathcal{T}^{\leq r}$, $C^{\bf w}$ is sign-coherent.
\item[$(2)_r$] For any ${\bf w} \in \mathcal{T}^{\leq r}$, Theorem~\ref{thm: recursion for tropical signs}~(a) holds.
\item[$(3)_r$] For any ${\bf w} \in \mathcal{T}^{\leq r}$, Lemma~\ref{lem: inequalities}~(a) holds.
\item[$(4)_r$] For any ${\bf w} \in \mathcal{T}^{\leq r}$, Theorem~\ref{thm: recursion for tropical signs}~(b) and Lemma~\ref{lem: inequalities}~(b) hold.
\end{itemize}

We show $(1)_{r}$, $(2)_r$, $(3)_{r}$, and $(4)_{r-1}$ by the induction on $r\geq 2$. When $r=2$, all claims hold by a direct calculation. (See Figure~\ref{fig: r=1}.) When we prove $(3)_2$, we can use $yv\geq 4$ by Proposition~\ref{prop: lowerbound of b}.
\begin{figure}[htbp]
\centering
\begin{tikzpicture}[scale=0.8]
\draw (-4.2,0) node {\tiny $\underset{\textup{initial matrix}}{\left(\begin{array}{ccc}
0 & -w & y\\
z & 0 & -u\\
-v & x & 0\\ \hline
1 & 0 & 0\\
0 & 1 & 0\\
0 & 0 & 1
\end{array}\right)}$};
\draw (0,0) node {\tiny $\underset{k=1,s=3}{
\left(\begin{array}{ccc}
0 & w & -y\\
-z & 0 & u^{[1]}\\
v & -x^{[1]} & 0\\ \hline
-1 & 0 & y\\
0 & 1 & 0\\
0 & 0 & 1
\end{array}\right)}$};
\draw (6,1.5) node {\tiny $\left(\begin{array}{ccc}
0 & -w & y^{[1,2]}\\
z & 0 & -u^{[1]}\\
-v^{[1,2]} & x^{[1]} & 0\\ \hline
-1 & 0 & y\\
0 & -1 & u^{[1]}\\
0 & 0 & 1
\end{array}\right)$};
\draw (6,-1.5) node {\tiny $\left(\begin{array}{ccc}
0 & -w^{[1,3]} & y\\
z^{[1,3]} & 0 & -u^{[1]}\\
-v & x^{[1]} & 0\\ \hline
yv-1 & 0 & -y\\
0 & 1 & 0\\
v & 0 & -1
\end{array}\right)$};
\draw[->] (-2.5,0) -- (-2.2,0) node [above]{$1$} -- (-1.9,0);
\draw[->] (2,0.2) -- (2.7,0.6) node [above=3pt]{\tiny $t=2$} -- (3.4,1.0);
\draw[->] (2,-0.2) -- (2.7,-0.6) node [below=3pt]{\tiny $s=3$} -- (3.4,-1.0);
\end{tikzpicture}
\caption{The base case for $r=2$} \label{fig: r=1}
\end{figure}

Suppose that $(1)_{r}$, $(2)_{r}$, $(3)_r$ and $(4)_{r-1}$ hold for some $r \in \mathbb{Z}_{\geq 2}$. Let ${\bf w}=[k_1,\dots,k_r=k]\in \mathcal{T}^{\leq r}$. By $(2)_r$, there are $s$ and $t$ as in Theorem~\ref{thm: recursion for tropical signs}~(b) corresponding to ${\bf w}$.
We will prove the claims for each case in the following:
\begin{itemize}
\item $k=1,2,3$.
\item $(\varepsilon_1^{\bf w},\varepsilon_{2}^{\bf w},\varepsilon_3^{\bf w})=(\pm 1,\pm1,\pm1)$.
\item $\mathrm{sign}(B^{\bf w})=\pm\mathrm{sign}(B)$.
\end{itemize}
However, some cases do not occur. For example, assume that $k=1$ and $\mathrm{sign}(B^{\bf w})=\mathrm{sign}(B)$. Then, by $(2)_r$, $(\varepsilon_1^{\bf w},\varepsilon_{2}^{\bf w},\varepsilon_3^{\bf w})=(1,1,1),(-1,-1,-1),(\pm1,-1,1)$ will not occur because there is no $s$ such that $\varepsilon_{s}^{\bf w}b_{s1}^{\bf w}>0$ and $\varepsilon_s^{\bf w} \neq \varepsilon_{1}^{\bf w}$. So, we need to check the cases that $(\varepsilon_1^{\bf w},\varepsilon_{2}^{\bf w},\varepsilon_3^{\bf w})=(1,1,-1),(1,-1,-1),(-1,1,1),(-1,1,-1)$. All cases that we cannot eliminate by the condition $(2)_r$ may be shown by a similar argument. 

Therefore, we focus on proving the case of $k=1$, $\mathrm{sign}(B^{\bf w})=\mathrm{sign}(B)$, and $(\varepsilon_1^{\bf w},\varepsilon_{2}^{\bf w},\varepsilon_3^{\bf w})=(1,1,-1)$. Namely, the case is that
\begin{equation}
\left(\begin{matrix}
B^{\bf w}\\\hline
C^{\bf w}
\end{matrix}\right)
=\left(\begin{matrix}
0 & -w^{\bf w} & y^{\bf w}\\
z^{\bf w} & 0 & -u^{\bf w}\\
-v^{\bf w} & x^{\bf w} & 0\\\hline
\underset{+}{{\bf c}_{1}^{\bf w}} & \underset{+}{{\bf c}_{2}^{\bf w}} & \underset{-}{{\bf c}_{3}^{\bf w}}
\end{matrix}\right).
\end{equation}
Under this circumstance, the indices $s,t \in \{2,3\}$ defined in Theorem~\ref{thm: recursion for tropical signs} are $s=3$ and $t=2$. Then, we have
\begin{equation}
\begin{aligned}
\left(\begin{matrix}
B^{{\bf w}[2]}\\\hline
C^{{\bf w}[2]}
\end{matrix}\right)
&=\left(\begin{matrix}
0 & w^{\bf w} & -y^{{\bf w}[2]}\\
-z^{\bf w} & 0 & u^{\bf w}\\
v^{{\bf w}[2]} & -x^{\bf w} & 0\\\hline
{{\bf c}_{1}^{\bf w}+z^{\bf w}{\bf c}_2^{\bf w}} & {-{\bf c}_{2}^{\bf w}} & {{\bf c}_{3}^{\bf w}}
\end{matrix}\right),\\
\left(\begin{matrix}
B^{{\bf w}[3]}\\\hline
C^{{\bf w}[3]}
\end{matrix}\right)
&=\left(\begin{matrix}
0 & w^{{\bf w}[3]} & -y^{{\bf w}}\\
-z^{{\bf w}[3]} & 0 & u^{\bf w}\\
v^{{\bf w}} & -x^{\bf w} & 0\\\hline
{{\bf c}_{1}^{\bf w}+v^{\bf w}{\bf c}_3^{\bf w}} & {{\bf c}_{2}^{\bf w}} & {-{\bf c}_{3}^{\bf w}}
\end{matrix}\right).
\end{aligned}
\end{equation}

First, we show $(4)_{r}$.
By ${\bf c}_1^{\bf w},{\bf c}_2^{\bf w} \geq {\bf 0}$ and $z^{\bf w}
>0$, we have ${\bf c}^{{\bf w}[2]}_1={\bf c}_{1}^{\bf w}+z^{\bf w}{\bf c}_2^{\bf w} \geq {\bf c}^{\bf w}_1 \geq {\bf 0}$. Here, the first inequality is not an equality because ${\bf c}_2^{\bf w} \neq {\bf 0}$. Note that ${\bf c}_2^{\mfw[2]}=-{\bf c}_2^{\bf w}$ and ${\bf c}_3^{\mfw[2]}={\bf c}_3^{\bf w}$. Hence, the claim holds for ${\mfw}[2]$. As for $\mfw[3]$, we aim to prove that $-{\bf c}_1^{{\bf w}[3]}=-({\bf c}_{1}^{\bf w}+v^{\bf w}{\bf c}_3^{\bf w}) \geq {\bf c}_1^{\bf w}$. This inequality is equivalent to $2{\bf c}_1^{\bf w}+v^{\bf w}{\bf c}_3^{\bf w} \leq {\bf 0}$. It follows from $(3)_r$ that 
\begin{equation}
(y^{\bf w}v^{\bf w}-2){\bf c}_1^{\bf w}+v^{\bf w}{\bf c}_3^{\bf w} \leq {\bf 0}.
\end{equation}
Note that $2{\bf c}_1^{\bf w}+v^{\bf w}{\bf c}_{3}^{\bf w} \leq (y^{\bf w}v^{\bf w}-2){\bf c}_1^{\bf w}+v^{\bf w}{\bf c}_3^{\bf w}$ because $y^{\bf w}v^{\bf w} \geq 4$ and ${\bf c}_1^{\bf w} \geq {\bf 0}$. Thus, we obtain $2{\bf c}_1^{\bf w}+v^{\bf w}{\bf c}_3^{\bf w} \leq {\bf 0}$. Here, we have $2{\bf c}_{1}^{\bf w}+v^{\bf w}{\bf c}_{3}^{\bf w} \neq {\bf 0}$ because ${\bf c}_1^{\bf w}$ and ${\bf c}_{3}^{\bf w}$ are linearly independent. Since ${\mfc}_2^{\mfw[3]}={\bf c}_2^{\bf w}$ and ${\bf c}_3^{\mfw[3]}=-{\bf c}_3^{\bf w}$, we conclude that $(4)_r$ holds.

Furthermore, it also implies that $(1)_{r+1}$ holds. 

Next, we show $(2)_{r+1}$. For ${\bf w}[2]$, we have $k=2$ and then we choose $s=1$. For ${\bf w}[3]$, we have $k=3$ and then we choose $s=1$. It can be checked directly that $(2)_{r+1}$ holds. 

Last, we show $(3)_{r+1}$. For ${\bf w}[2]$, since $z^{\bf w}w^{\bf w}\geq 4$ and $\varepsilon^{\mfw[2]}_{1}=1$, we have
\begin{equation}
(z^{\bf w}w^{\bf w}-2){\bf c}_2^{{\bf w}[2]}+w^{\bf w}{\bf c}_1^{{\bf w}[2]}\geq2{\bf c}_2^{\bf w}+w^{\bf w}{\bf c}_1^{\bf w} \geq {\bf 0}
\end{equation}
by ${\bf c}_1^{\bf w},{\bf c}_2^{\bf w} \geq {\bf 0}$ and $w^{\bf w} \geq 0$. For ${\bf w}[3]$, since $y^{\bf w}v^{\bf w}\geq 4$ and $\varepsilon^{\mfw[3]}_{1}=1$, we have
\begin{equation}
(y^{\bf w}v^{\bf w}-2){\bf c}_{3}^{{\bf w}[3]}+y^{\bf w}{\bf c}_1^{{\bf w}[3]}\leq2{\bf c}_3^{\bf w}+y^{\bf w}{\bf c}_1^{\bf w}.
\end{equation}
Let ${\bf w}'=[k_1,\dots,k_{r-1}] \in \mathcal{T}^{\leq r-1}$, which is the sequence obtained by eliminating the last index $k=1$ from ${\bf w}$. Consider
\begin{equation}
C^{{\bf w}'}=C^{{\bf w}[1]}=\left(\begin{matrix}
-{\bf c}_1^{\bf w}, & {\bf c}_2^{\bf w}, & {\bf c}_3^{\bf w}+y^{\bf w}{\bf c}_1^{\bf w}
\end{matrix}\right).
\end{equation}
Then, by $(4)_{r-1}$, we have ${\varepsilon}_{3}^{{\bf w}'}({\bf c}_3^{\bf w}+y^{\bf w}{\bf c}_1^{\bf w}) \leq -{\bf c}_3^{\bf w}$. If $\varepsilon_3^{{\bf w}'}=1$, it implies $2{\bf c}_3^{\bf w}+y^{\bf w}{\bf c}_1^{\bf w} \leq {\bf 0}$ as we desired. If $\varepsilon_3^{{\bf w}'}=-1$, we obtain that
\begin{equation}
2{\bf c}_3^{\bf w}+y^{\bf w}{\bf c}_1^{\bf w} ={\bf c}_3^{\bf w}+\{{\bf c}_3^{\bf w}+y^{\bf w}{\bf c}_1^{\bf w}\} \leq {\bf c}_3^{\bf w} \leq {\bf 0},
\end{equation}
where the first inequality follows from ${\bf c}_3^{\bf w}+y^{\bf w}{\bf c}_1^{\bf w}={\bf c}_{3}^{{\bf w}'}\leq {\bf 0}$. Thus, $(3)_{r+1}$ holds.

In conclusion, all the claims $(1)_r,(2)_r,(3)_r$ and $(4)_r$ hold by induction.
\end{proof}

By the inequality (\ref{eq: k,s inequality}), we may derive a more strictly inequality than (\ref{eq: monotonicity}) for some direction.
\begin{corollary}
Let $B \in \mathrm{M}_3(\mathbb{R})$ be a cluster-cyclic matrix.
Let ${\bf w} \in \mathcal{T}$ and set $k$ as the last index of ${\bf w}$. Let $s \in \{1,2,3\}\backslash\{k\}$ be the index that satisfies (\ref{eq: choice of s}). Then, we have
\begin{equation}
\varepsilon_{k}^{{\bf w}[s]}{\bf c}_k^{{\bf w}[s]} \geq (|b_{sk}^{\bf w}b_{ks}^{\bf w}|-3)\varepsilon_{k}^{\bf w}{\bf c}_{k}^{\bf w} .
\end{equation}
\end{corollary}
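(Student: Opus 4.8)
The plan is to expand $\varepsilon_k^{{\bf w}[s]}{\bf c}_k^{{\bf w}[s]}$ explicitly using the $c$-vector mutation rule and then to absorb the resulting cross term by means of Lemma~\ref{lem: inequalities}(a). First I would collect the sign data attached to the index $s$. By the defining relations \eqref{eq: choice of s}, we have $\varepsilon_s^{\bf w}\neq\varepsilon_k^{\bf w}$, hence $\varepsilon_s^{\bf w}=-\varepsilon_k^{\bf w}$, and $\varepsilon_s^{\bf w}\mathrm{sign}(b_{ks}^{\bf w})=-1$. Since $B^{\bf w}$ is cyclic, off-diagonal transpose entries carry opposite signs, so $\mathrm{sign}(b_{sk}^{\bf w})=-\mathrm{sign}(b_{ks}^{\bf w})$ and therefore $\varepsilon_s^{\bf w}\mathrm{sign}(b_{sk}^{\bf w})=1$, i.e. $\varepsilon_s^{\bf w}b_{sk}^{\bf w}=|b_{sk}^{\bf w}|>0$. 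This is the key sign fact that makes the positive part in the mutation formula explicit.

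With that in hand, I would apply the $c$-vector recursion \eqref{eq: mutation of c,g-vectors} in direction $s$ to the index $k\neq s$, which gives
\begin{equation}
{\bf c}_k^{{\bf w}[s]}={\bf c}_k^{\bf w}+[\varepsilon_s^{\bf w}b_{sk}^{\bf w}]_{+}{\bf c}_s^{\bf w}={\bf c}_k^{\bf w}+|b_{sk}^{\bf w}|{\bf c}_s^{\bf w}.
\end{equation}
Using $\varepsilon_k^{{\bf w}[s]}=-\varepsilon_k^{\bf w}$ from the first line of \eqref{eq: recursion for tropical signs} together with $\varepsilon_s^{\bf w}=-\varepsilon_k^{\bf w}$, I would rewrite
\begin{equation}
\varepsilon_k^{{\bf w}[s]}{\bf c}_k^{{\bf w}[s]}=-\varepsilon_k^{\bf w}{\bf c}_k^{\bf w}-|b_{sk}^{\bf w}|\varepsilon_k^{\bf w}{\bf c}_s^{\bf w}=-\varepsilon_k^{\bf w}{\bf c}_k^{\bf w}+\varepsilon_s^{\bf w}|b_{sk}^{\bf w}|{\bf c}_s^{\bf w},
\end{equation}
so that the whole problem is now reduced to bounding the term $\varepsilon_s^{\bf w}|b_{sk}^{\bf w}|{\bf c}_s^{\bf w}$ from below.

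This is exactly what Lemma~\ref{lem: inequalities}(a) supplies: inequality \eqref{eq: k,s inequality} reads $\varepsilon_s^{\bf w}\big((|b_{sk}^{\bf w}b_{ks}^{\bf w}|-2){\bf c}_k^{\bf w}+|b_{sk}^{\bf w}|{\bf c}_s^{\bf w}\big)\geq{\bf 0}$, which rearranges (again using $\varepsilon_s^{\bf w}=-\varepsilon_k^{\bf w}$) to $\varepsilon_s^{\bf w}|b_{sk}^{\bf w}|{\bf c}_s^{\bf w}\geq\varepsilon_k^{\bf w}(|b_{sk}^{\bf w}b_{ks}^{\bf w}|-2){\bf c}_k^{\bf w}$. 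Substituting this lower bound into the previous display yields
\begin{equation}
\varepsilon_k^{{\bf w}[s]}{\bf c}_k^{{\bf w}[s]}\geq-\varepsilon_k^{\bf w}{\bf c}_k^{\bf w}+\varepsilon_k^{\bf w}(|b_{sk}^{\bf w}b_{ks}^{\bf w}|-2){\bf c}_k^{\bf w}=(|b_{sk}^{\bf w}b_{ks}^{\bf w}|-3)\varepsilon_k^{\bf w}{\bf c}_k^{\bf w},
\end{equation}
which is the claimed inequality. I do not expect a genuine obstacle here: the substance of the estimate is already contained in Lemma~\ref{lem: inequalities}(a) (proved simultaneously with the main theorem), and the corollary is a rearrangement of it through the mutation formula. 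The only point demanding care is the sign bookkeeping — in particular verifying that the positive part $[\varepsilon_s^{\bf w}b_{sk}^{\bf w}]_+$ equals $|b_{sk}^{\bf w}|$ via cyclicity, and consistently substituting $\varepsilon_s^{\bf w}=-\varepsilon_k^{\bf w}$ and $\varepsilon_k^{{\bf w}[s]}=-\varepsilon_k^{\bf w}$ — since a single sign slip would flip the direction of the inequality.
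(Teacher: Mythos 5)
Your proposal is correct and follows essentially the same route as the paper's own proof: expand ${\bf c}_k^{{\bf w}[s]}={\bf c}_k^{\bf w}+|b_{sk}^{\bf w}|{\bf c}_s^{\bf w}$ via the mutation rule, apply $\varepsilon_k^{{\bf w}[s]}=-\varepsilon_k^{\bf w}=\varepsilon_s^{\bf w}$ from Theorem~\ref{thm: recursion for tropical signs}, and absorb the cross term with Lemma~\ref{lem: inequalities}(a). The only difference is that you spell out the sign bookkeeping (why $[\varepsilon_s^{\bf w}b_{sk}^{\bf w}]_+=|b_{sk}^{\bf w}|$) which the paper leaves implicit as "by the definition of $s$".
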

\begin{proof}
We may verify ${\bf c}_{k}^{{\bf w}[s]}={\bf c}_k^{\bf w}+|b^{\bf w}_{sk}|{\bf c}_{s}^{\bf w}$ by the definition of $s$. By Theorem~\ref{thm: recursion for tropical signs}, we have $\varepsilon_{k}^{{\bf w}[s]}=-\varepsilon_k^{\bf w}=\varepsilon_s^{\bf w}$. So, we obtain
\begin{equation}
\varepsilon_{k}^{{\bf w}[s]}{\bf c}_k^{{\bf w}[s]}=-\varepsilon_{k}^{\bf w}{\bf c}_k^{\bf w}+|b^{\bf w}_{sk}|\varepsilon_s^{\bf w}{\bf c}_{s}^{\bf w}.
\end{equation}
By (\ref{eq: k,s inequality}), we have $|b_{sk}^{\bf w}|\varepsilon_s^{\bf w}{\bf c}_{s}^{\bf w} \geq (|b_{sk}^{\bf w}b_{ks}^{\bf w}|-2)\varepsilon_k^{\bf w}{\bf c}_{k}^{\bf w}$. Thus, we obtain $\varepsilon_k^{{\bf w}[s]}{\bf c}_k^{{\bf w}[s]} \geq (|b_{sk}^{\bf w}b_{ks}^{\bf w}|-3)\varepsilon_{k}^{\bf w}{\bf c}_k^{\bf w}$.
\end{proof}
\begin{example}\label{counter}
	Note that the sign-coherence does not always hold for any real exchange matrices. This is a counter-example of the sign-coherence when the exchange matrix is not cluster-cyclic. Let the initial exchange matrix and $C$-matrix be 
	\begin{align}
		B^{\emptyset}=\left(\begin{matrix}
0 & 0.5 & 1\\
-0.5 & 0 & 0.5\\
-1 & -0.5 & 0
\end{matrix}\right),\ C^{\emptyset}=\left(\begin{matrix}
1 & 0 & 0\\
0 & 1 & 0\\
0 & 0 & 1
\end{matrix}\right).
	\end{align} Take $\mfw=[2,1,2,1]$ and then we have 
	\begin{align}
		B^{\mfw}=\left(\begin{matrix}
0 & 0.5 & 1.1875\\
-0.5 & 0 & -0.125\\
-1.1875 & 0.125 & 0
\end{matrix}\right),\ C^{\mfw}=\left(\begin{matrix}
1 & -0.5 & 0.0625\\
0.5 & 0.75 & -0.0938\\
0 & 0 & 1
\end{matrix}\right).
	\end{align} It implies that $C^{\mfw}$ is not sign-coherent.
\end{example}

\begin{remark}
	The conjecture of sign-coherence of $C$-matrices for ordinary cluster algebras were given by Fomin-Zelevinsky, see \cite[Prop. 5.6]{FZ07}. The sign-coherence conjecture was proved in the skew-symmetric case by \cite{DWZ10, Pla11, Nag13} with the algebraic representation methods. In general, the sign-coherence conjecture for the skew-symmetrizable case was proved by \cite{GHKK18} with the scattering diagram method. Here, although we only generalize and consider the rank $3$ real cluster-cyclic case, we introduce a new method as in \Cref{thm: recursion for tropical signs} and \Cref{lem: inequalities} to prove it.
\end{remark}

\section{Quadratic equations arising from $c$-vectors} \label{Sec: quadratic}
The relations between cluster variables and quadradic equations are studied by \cite{Lam16, GM23, CL24, CL25}. In this section, as an application of the sign-coherence and monotonicity of $c$-vectors for rank $3$ real cluster-cyclic exchange matrices, we exhibit the relations between the $c$-vectors and quadratic equations. Beforehand, we recall some basic notions and properties based on \cite{BGZ06, Sev11} as follows.
\begin{definition}[{\emph{real quasi-Cartan matrix}}]\label{quasi-Cartan matrix}
	The real matrix $A=(a_{ij})_{n\times n}\in \mathrm{M}_n(\mathbb{R})$ is called \emph{symmetrizable} if there exists a positive diagonal matrix $D=\diag(d_1,\dots,d_n)$, such that $DA$ is symmetric, that is $d_ia_{ij}=d_{j}a_{ji}$ for any $i\neq j$. Furthermore, $A$ is said to be a \emph{real quasi-Cartan matrix} if it is symmetrizable and all of its diagonal entries are $2$.
\end{definition}
Then, the quasi-Cartan matrices are related to skew-symmetrizable matrices by the notion as follows.
\begin{definition}[{\emph{real quasi-Cartan companion}}]
	Let $B=(b_{ij})_{n\times n}\in \mathrm{M}_n(\mathbb{R})$ be a real  skew-symmetrizable matrix. The \emph{real quasi-Cartan companion} of $B$ is a real quasi-Cartan matrix $A(B)=(a_{ij})_{n\times n}$, where $|a_{ij}|=|b_{ij}|$ for all $i\neq j$. Note that $A(B)$ is a symmetrizable matrix with the symmetrizer $D$ as $B$.
\end{definition}
\begin{remark}
	In \cite{LL24}, when $n=3$, if $a_{ij}=|b_{ij}|$ for all $i\neq j$, then $A(B)$ is called the \emph{pseudo-Cartan companion} of $B$.
\end{remark}
Now, we focus on the case of rank $3$ and give some new notions as follows. 
\begin{definition}\label{qCc}
	Let $B \in \mathrm{M}_3(\mathbb{R})$ be a cluster-cyclic exchange matrix and $\mcP^{\mfw}$ associated to the reduced mutation sequence $\mfw$ 
	be a set as follows:
	\begin{align}\label{Pw}
		\mcP^{\mfw}=\{(i,j)|\ \varepsilon^{\mfw}_{i}b^{\mfw}_{ij}>0\ \text{or}\ \varepsilon^{\mfw}_{j}b^{\mfw}_{ji}>0\}.
	\end{align}  Note that if $(i,j)\in \mcP^{\mfw}$, we also have $(j,i)\in \mcP^{\mfw}$.
\end{definition}
In the following, we introduce the notion of real quasi-Cartan companion associated to the quasi-Cartan collection $\mcP^{\mfw}$.
\begin{definition}[{\emph{quasi-Cartan companion of $\mcP^{\mfw}$}}] 
\label{quasi-Cartan companion of P} Let $B \in \mathrm{M}_3(\mathbb{R})$ be a cluster-cyclic exchange matrix with the skew-symmetrizer $D$ and $\mfw$ be a reduced mutation sequence.\

\begin{enumerate}
	\item 
For $\mfw=\emptyset$ and any $q\in \{1,2,3\}$, we denote by $A_q$ the quasi-Cartan companion of $B$ as follows: \begin{align}
	A^{\emptyset}_q=A_q=\begin{pmatrix}
		2 & \varepsilon^q_{12}|b_{12}| & \varepsilon^q_{13}|b_{13}|\\ \varepsilon^q_{21}|b_{21}|& 2 & \varepsilon^q_{23}|b_{23}| \\ \varepsilon^q_{31}|b_{31}| & \varepsilon^q_{32}|b_{32}| & 2	\end{pmatrix},
\end{align} where for any $i\neq j$, $$
	\varepsilon^q_{ij}= \left\{
		\begin{array}{ll}
			1, &  \text{if}\ q\notin \{i,j\},  \\
			-1, &  \text{if}\  q \in \{i,j\}. 
		\end{array} \right.
$$  
\item For $|\mfw|\geq 1$, let $A^{\mfw}=(a^{\mfw}_{ij})_{n\times n}$ be the quasi-Cartan companion of $B^{\mfw}$  as follows:
	\begin{align}
		a^{\mfw}_{ij}=\left\{
		\begin{array}{ll}
			2, &  \text{if}\ i=j;  \\
			-|b^{\mfw}_{ij}|, &  \text{if}\ i\neq j \;\;
			\mbox{and}\; (i,j) \in \mcP^{\mfw}; \\ |b^{\mfw}_{ij}|, &  \text{if}\ i\neq j \;\;
			\mbox{and}\; (i,j) \notin  \mcP^{\mfw}.
		\end{array} \right.
	\end{align}
	\end{enumerate} We call $A_q\ (q=1,2,3)$ and $A^{\mfw}(|\mfw|\geq 1)$ the \emph{quasi-Cartan companion of $\mcP^{\mfw}$}. Furthermore, we denote by $\tilde{A_q}=DA_q$ and $\tilde{A}^{\mfw}=DA^{\mfw}$. Then, we call them the \emph{quasi-Cartan deformation of $\mcP^{\mfw}$}.
\end{definition} 
\begin{remark}
	In \cite{Sev15}, it was proved that the $c$-vectors associated with an acyclic seed of skew-symmetric cluster algebras define a quasi-Cartan companion. Here, we introduce the quasi-Cartan collection and quasi-Cartan deformation as important tools to prove the theorems in the following.
\end{remark}
\begin{lemma}
	Let $B \in \mathrm{M}_3(\mathbb{R})$ be a cluster-cyclic exchange matrix. Then, for any  reduced mutation sequence $\mfw$ with $|\mfw|\geq 1$, there are exactly four elements in $\mcP^{\mfw}$. 
\end{lemma}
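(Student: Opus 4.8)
The plan is to convert the counting of $\mcP^{\mfw}$ into a short combinatorial statement about the tropical sign vector $(\varepsilon_1^{\mfw},\varepsilon_2^{\mfw},\varepsilon_3^{\mfw})$. First I would record that $\mcP^{\mfw}$ is symmetric: $(i,j)\in\mcP^{\mfw}$ iff $(j,i)\in\mcP^{\mfw}$, so $\mcP^{\mfw}$ is a disjoint union of two-element blocks and $|\mcP^{\mfw}|$ equals twice the number of unordered pairs $\{i,j\}$ with $i\neq j$ lying in it. Hence it suffices to prove that exactly two of the three unordered pairs $\{1,2\},\{2,3\},\{3,1\}$ belong to $\mcP^{\mfw}$, i.e.\ that exactly one of them is excluded. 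Since $\mathbf{C}(B)$ is sign-coherent by \Cref{thm: sign-coherency} and the $c$-vectors form a basis by \Cref{prop: fundamental properties under sign-coherency}, each $c$-vector is nonzero and therefore $\varepsilon_i^{\mfw}\in\{\pm1\}$ for all $i$.

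Next I would make the exclusion condition explicit. Because $B$ is cluster-cyclic, $B^{\mfw}$ is cyclic with $\mathrm{sign}(B^{\mfw})=(-1)^{|\mfw|}\mathrm{sign}(B)$, so for $i\neq j$ the entries $b_{ij}^{\mfw}$ and $b_{ji}^{\mfw}$ are nonzero of opposite sign. Negating the defining condition of $\mcP^{\mfw}$ then yields that $\{i,j\}\notin\mcP^{\mfw}$ iff $\varepsilon_i^{\mfw}\,\mathrm{sign}(b_{ij}^{\mfw})=-1$ and $\varepsilon_j^{\mfw}\,\mathrm{sign}(b_{ji}^{\mfw})=-1$. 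Up to relabelling the indices I may assume $\mathrm{sign}(B)$ is the standard cyclic matrix $\left(\begin{smallmatrix}0&-1&1\\1&0&-1\\-1&1&0\end{smallmatrix}\right)$ of \Cref{cc and ca}; a direct check of the three pairs then shows that, reading indices cyclically as $1\to2\to3\to1$, the pair $\{i,i{+}1\}$ is excluded exactly when $(\varepsilon_i^{\mfw},\varepsilon_{i+1}^{\mfw})=(+1,-1)$ for even $|\mfw|$, and exactly when $(\varepsilon_i^{\mfw},\varepsilon_{i+1}^{\mfw})=(-1,+1)$ for odd $|\mfw|$.

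The combinatorial heart is then immediate: the number of excluded pairs equals the number of $(+,-)$ transitions (resp.\ $(-,+)$ transitions) of the sequence $(\varepsilon_1^{\mfw},\varepsilon_2^{\mfw},\varepsilon_3^{\mfw})$ read around the $3$-cycle. In any cyclic binary sequence the number of $(+,-)$ transitions equals the number of $(-,+)$ transitions, so the total number of sign changes is even; for a length-$3$ cyclic sequence it is therefore $0$ or $2$, and it is $0$ precisely when the sequence is constant. Consequently exactly one pair is excluded when the tropical sign vector is non-constant, and none is excluded when it is constant. Finally I would invoke \Cref{two signs}, which rules out $(\varepsilon_1^{\mfw},\varepsilon_2^{\mfw},\varepsilon_3^{\mfw})=(1,1,1),(-1,-1,-1)$ for every $\mfw\in\mcT\setminus\{\emptyset\}$; thus the vector is non-constant, exactly one unordered pair is excluded, and $|\mcP^{\mfw}|=4$.

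I do not expect a serious obstacle: once the descent reformulation is in place the argument is elementary. The only point requiring care is the sign bookkeeping, in particular the global factor $(-1)^{|\mfw|}$, which interchanges the roles of $(+,-)$ and $(-,+)$ transitions but leaves their common count unchanged, so both parities of $|\mfw|$ give the same answer.
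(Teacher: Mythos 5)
Your proof is correct and follows essentially the same route as the paper: both arguments hinge on \Cref{two signs} (the tropical sign vector $(\varepsilon_1^{\mfw},\varepsilon_2^{\mfw},\varepsilon_3^{\mfw})$ is non-constant for $\mfw\neq\emptyset$) together with the cyclic sign structure $\mathrm{sign}(B^{\mfw})=(-1)^{|\mfw|}\mathrm{sign}(B)$ and the symmetry of $\mcP^{\mfw}$. The only difference is organizational: the paper fixes the sign vector up to relabelling and enumerates the six defining conditions explicitly in two cases, whereas you fix the orientation and count excluded unordered pairs as $(+,-)$-transitions around the $3$-cycle, which is a tidier bookkeeping of the same case analysis.
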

\begin{proof}
	By \Cref{two signs}, at least one element in the set $\{\varepsilon^{\mfw}_{l},\varepsilon^{\mfw}_{m},\varepsilon^{\mfw}_{n}\}$ is $1$ or $-1$, where $\{l,m,n\}=\{1,2,3\}$. Without loss of generality, we assume that $\varepsilon^{\mfw}_{l}=\varepsilon^{\mfw}_{m}=1$ and $\varepsilon^{\mfw}_{n}=-1$. (The case that $\varepsilon^{\mfw}_{l}=\varepsilon^{\mfw}_{m}=-1$ and $\varepsilon^{\mfw}_{n}=1$ is similar).
	
	If $b^{\mfw}_{lm}>0$, since $B^{\mfw}$ is cyclic, then we have $b^{\mfw}_{mn}>0$ and $b^{\mfw}_{nl}>0$. It implies that 
	\begin{equation}
	\begin{array}{cc}
		\varepsilon^{\mfw}_{l}b^{\mfw}_{lm}>0,\ \varepsilon^{\mfw}_{m}b^{\mfw}_{mn}>0,\ \varepsilon^{\mfw}_{n}b^{\mfw}_{nm}>0, \\ \varepsilon^{\mfw}_{l}b^{\mfw}_{ln} <0,\ \varepsilon^{\mfw}_{m}b^{\mfw}_{ml}<0,\ \varepsilon^{\mfw}_{n}b^{\mfw}_{nl}<0.
		\end{array}
		  	\end{equation}
	 Hence, we have $\mcP^{\mfw}=\{(l,m),(m,l),(m,n),(n,m)\}$. 
     
	If $b^{\mfw}_{lm}<0$, since $B^{\mfw}$ is cyclic, then we have $b^{\mfw}_{mn}<0$ and $b^{\mfw}_{nl}<0$. It implies that \begin{equation}
	\begin{array}{cc}
		\varepsilon^{\mfw}_{l}b^{\mfw}_{ln}>0,\  \varepsilon^{\mfw}_{m}b^{\mfw}_{ml}>0,\  \varepsilon^{\mfw}_{n}b^{\mfw}_{nl}>0,\\ \varepsilon^{\mfw}_{l}b^{\mfw}_{lm}<0,\ \varepsilon^{\mfw}_{m}b^{\mfw}_{mn}<0,\ \varepsilon^{\mfw}_{n}b^{\mfw}_{nm}<0.
		\end{array}
		  	\end{equation}
Hence, we have $\mcP^{\mfw}=\{(l,n),(n,l),(l,m),(m,l)\}$. 

Then, there are exactly four elements in $\mcP^{\mfw}$ with $|\mfw|\geq 1$.
\end{proof}
\begin{remark}
	If $|\mfw|=0$, that is $\mfw=\emptyset$, it is clear that there are exactly six elements in the set $\mcP^{\mfw}$ defined by \eqref{Pw}. 
\end{remark}

\begin{theorem}\label{duality}
	Let $B \in \mathrm{M}_3(\mathbb{R})$ be a cluster-cyclic exchange matrix with the skew-symmetrizer $D$ and $\mfw=[k_1,\dots,k_r]$ be any reduced sequence with $r\geq 1$. Then, the quasi-Cartan congruence relation as follows holds:
	\begin{align}
		(C^{\mfw})^{\top}\tilde{A}_{k_1}C^{\mfw}=\tilde{A}^{\mfw}. \label{qCc}
	\end{align}
	\end{theorem}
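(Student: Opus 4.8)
The plan is to prove \eqref{qCc} by induction on $r=|\mfw|$, reducing it to a single congruence under one mutation. Since $B$ is cluster-cyclic, ${\bf C}(B)$ is sign-coherent by \Cref{thm: sign-coherency}, so the recursion \eqref{CG rec} may be written as $C^{\mfw[k]}=C^{\mfw}E_k^{\mfw}$, where $E_k^{\mfw}:=J_k+[\varepsilon_k^{\mfw}B^{\mfw}]^{k\bullet}_{+}$ is the identity except that its $(k,k)$-entry equals $-1$ and its $k$-th row carries the nonnegative numbers $p_j:=[\varepsilon_k^{\mfw}b^{\mfw}_{kj}]_{+}$ ($j\neq k$). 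As $C^{\emptyset}=I$, iterating gives $C^{\mfw}=E^{\emptyset}_{k_1}E^{\mfw_1}_{k_2}\cdots E^{\mfw_{r-1}}_{k_r}$ with $\mfw_i=[k_1,\dots,k_i]$, so the left-hand side of \eqref{qCc} telescopes and the theorem reduces to the two local identities
\[
(E^{\emptyset}_{k_1})^{\top}\tilde{A}_{k_1}E^{\emptyset}_{k_1}=\tilde{A}^{[k_1]}
\quad\text{and}\quad
(E_k^{\mfw})^{\top}\tilde{A}^{\mfw}E_k^{\mfw}=\tilde{A}^{\mfw[k]}\ \ (|\mfw|\geq 1),
\]
the second applied successively along $\mfw_1,\mfw_2,\dots,\mfw_r$.

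Both identities are $3\times 3$ equalities that I would verify entry by entry. First note that each $\tilde{A}^{\mfw}$ (and $\tilde{A}_{k_1}$) is symmetric: its $(i,j)$- and $(j,i)$-entries carry equal signs, because $(i,j)\in\mcP^{\mfw}\Leftrightarrow(j,i)\in\mcP^{\mfw}$, and the skew-symmetrizer relation $d_i|b^{\mfw}_{ij}|=d_j|b^{\mfw}_{ji}|$ makes them equal. Crucially, since $B^{\mfw}$ is cyclic, its $k$-th row has exactly one positive and one negative off-diagonal entry, so among the two numbers $p_i,p_j$ (for $i,j\neq k$) exactly one is nonzero; in particular $p_ip_j=0$. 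Writing $F:=(E_k^{\mfw})^{\top}\tilde{A}^{\mfw}E_k^{\mfw}$, a direct expansion then yields $F_{kk}=\tilde{a}^{\mfw}_{kk}=2d_k$, $F_{ii}=2d_i+2p_i(\tilde{a}^{\mfw}_{ik}+d_kp_i)$, $F_{kj}=-\tilde{a}^{\mfw}_{kj}-2d_kp_j$, and $F_{ij}=\tilde{a}^{\mfw}_{ij}+p_j\tilde{a}^{\mfw}_{ik}+p_i\tilde{a}^{\mfw}_{kj}$ for $i,j\neq k$.

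The diagonal entries match immediately: $F_{kk}=2d_k$ is forced because the $k$-th column of $E_k^{\mfw}$ is $-e_k$, while $F_{ii}=2d_i$ follows since $p_i>0$ implies $(i,k)\in\mcP^{\mfw}$, whence $\tilde{a}^{\mfw}_{ik}=-d_i|b^{\mfw}_{ik}|=-d_kp_i$ by symmetrizability. For the off-diagonal entries one checks that the computed magnitude is $|b^{\mfw[k]}_{ij}|$: for $F_{kj}$ this is clear because $|b^{\mfw[k]}_{kj}|=|b^{\mfw}_{kj}|$, while for $F_{ij}$ (say $p_i>0$, $p_j=0$, the other case being symmetric) the relation $d_k|b^{\mfw}_{ki}|=d_i|b^{\mfw}_{ik}|$ rewrites $F_{ij}=d_i\bigl(a^{\mfw}_{ij}+|b^{\mfw}_{ik}|\,a^{\mfw}_{kj}\bigr)$, which the $B$-mutation formula \eqref{exchange mut} together with the cluster-cyclic inequalities of \Cref{prop: lowerbound of b} identify with $\pm d_i|b^{\mfw[k]}_{ij}|$. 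The base identity is handled the same way, with the role of $\mcP^{\mfw}$ played by the explicit sign pattern $\varepsilon^{k_1}_{ij}$ defining $A_{k_1}$; there the diagonal already forces the symmetrizer relation.

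The main obstacle is the sign bookkeeping rather than the magnitudes: one must confirm in every configuration that the sign produced by the congruence coincides with membership of $(i,j)$ in $\mcP^{\mfw[k]}$. This is where the induction genuinely uses \Cref{thm: recursion for tropical signs}, which pins down $(\varepsilon^{\mfw[k]}_1,\varepsilon^{\mfw[k]}_2,\varepsilon^{\mfw[k]}_3)$ from $(\varepsilon^{\mfw}_1,\varepsilon^{\mfw}_2,\varepsilon^{\mfw}_3)$ and hence determines $\mcP^{\mfw[k]}$, together with \Cref{two signs} ensuring the cyclic sign triple never degenerates. I would organize this last verification by the same finite case split used in the proof of \Cref{thm: recursion for tropical signs} — on the last index $k$, on whether $\mathrm{sign}(B^{\mfw})=\pm\mathrm{sign}(B)$, and on the tropical-sign triple — so that each configuration collapses to a short sign check.
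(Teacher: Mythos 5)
Your proposal is correct and takes essentially the same route as the paper's own proof: the paper also inducts on $|\mfw|$, writes $C^{\mfw[i]}=C^{\mfw}X^{i;\mfw}$ with $X^{i;\mfw}=J_i+[\varepsilon_i^{\mfw}B^{\mfw}]^{i\bullet}_{+}$ (your $E_i^{\mfw}$), reduces the claim to the one-step congruence $(X^{i;\mfw})^{\top}\tilde{A}^{\mfw}X^{i;\mfw}=\tilde{A}^{\mfw[i]}$, and checks it entry by entry, using Theorem~\ref{thm: recursion for tropical signs} to pin down $\mcP^{\mfw}$ and $\mcP^{\mfw[i]}$ together with the skew-symmetrizer relations and the cluster-cyclic sign structure. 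The sign bookkeeping you defer to a finite case split is exactly the two-case analysis ($\varepsilon_t^{\mfw}>0$ versus $\varepsilon_t^{\mfw}<0$, after a WLOG normalization) carried out in the paper.
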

\begin{proof}
 Without loss of generality, we might assume that $k_1=3$ and $\sign(b_{12})=\sign(b_{23})=\sign(b_{31})=1$. Then, we have 
	\begin{align}
		\tilde{A}_3=\begin{pmatrix}
		2d_1 & d_1|b_{12}| & -d_1|b_{13}|\\ d_2|b_{21}|& 2d_2 & -d_2|b_{23}| \\ -d_3|b_{31}| & -d_3|b_{32}| & 2d_3
	\end{pmatrix}.
	\end{align} We take the induction on the length of $\mfw$. If $|\mfw|=1$, we obtain that  \begin{align}
		B^{[3]}= \begin{pmatrix}
		0 & b_{12}-b_{13}b_{32} & -b_{13}\\ b_{21}+b_{23}b_{31}& 0 & -b_{23} \\ -b_{31} & -b_{32} & 0
	\end{pmatrix},\ 
C^{[3]}= \begin{pmatrix}
		1 & 0 & 0\\ 0& 1 & 0 \\ b_{31} & 0 & -1
	\end{pmatrix}.
	\end{align} and then $
		\mcP^{[3]}=\{(1,3),(3,1), (1,2), (2,1)\}.$ It can be checked directly that the equality \eqref{qCc} holds. Assume that the equality holds for $\mfw=[3,\dots, k_r]$, where $r\geq 1$ and $k_r=k\in \{1,2,3\}$. We need to prove that for any $i\in \{1,2,3\}\backslash\{k\}$, the equality holds:
		\begin{align}
			(C^{\mfw[i]})^{\top}\tilde{A}_{k_1}C^{\mfw[i]}=\tilde{A}^{\mfw[i]}.
		\end{align}
	Note that $C^{\mfw[i]}=C^{\mfw}X^{i;\mfw}$, where we denote by $X^{i;\mfw}=J_i+[\varepsilon_i^{\bf w}B^{\bf w}]^{i \bullet}_{+}$. Hence, it is equivalent to prove that 
	\begin{align}
		(X^{i;\mfw})^{\top}\tilde{A}^{\mfw}X^{i;\mfw}=\tilde{A}^{\mfw[i]}.
	\end{align}	
	Here, we denote by $X^{i;\mfw}=(\mfx^{i;\mfw}_1,\mfx^{i;\mfw}_2,\mfx^{i;\mfw}_3)$, where $\mfx^{i;\mfw}_j\in \mbR^3$. 	By \Cref{thm: recursion for tropical signs}, there is a unique $s\in \{1,2,3\}\backslash\{k\}$, such that 
	\begin{align}
		\varepsilon_s^{\mfw}\mathrm{sign}(b_{ks}^{\mfw})<0,\ \varepsilon_k^{\mfw}\neq \varepsilon_s^{\mfw}.
	\end{align} Let $t\in \{1,2,3\}\backslash\{k,s\}$ be another index. Without loss of generality, we might assume that $\varepsilon_s^{\mfw}>0$, then $b_{sk}^{\mfw}>0$ and $\varepsilon_k^{\mfw}<0$. (The case that $\varepsilon_s^{\mfw}<0$ is similar.) Since $B^{\mfw}$ is cluster-cyclic, we have $b_{kt}^{\mfw}>0$ and $b_{ts}^{\mfw}>0$. There are two possible cases to be discussed.
	\begin{enumerate}[leftmargin=2em]\itemsep=0pt
		\item If $\varepsilon_t^{\mfw}>0$, then $\varepsilon_t^{\mfw}b_{ts}^{\mfw}>0$ and $(\varepsilon^{\mfw}_{s},\varepsilon^{\mfw}_{t},\varepsilon^{\mfw}_{k})=(1,1,-1)$. Hence, it implies that $\mcP^{\mfw}=\{(s,k),(k,s),(t,s),(s,t)\}$ and $(t,k),(k,t)\notin \mcP^{\mfw}$. 
	
		Firstly, let $i=s$ and we obtain that 
		\begin{align}
			\mfx^{s;\mfw}_s=-\mfe_s,\ \mfx^{s;\mfw}_t=\mfe_t,\ \mfx^{s;\mfw}_k=\mfe_k+b_{sk}^{\mfw}\mfe_s.
		\end{align} Furthermore, by \Cref{thm: recursion for tropical signs} $(b)$, we have $(\varepsilon^{\mfw[s]}_{s},\varepsilon^{\mfw[s]}_{t},\varepsilon^{\mfw[s]}_{k})=(-1,1,1)$. Note that $b_{st}^{\mfw[s]}>0$, $b_{tk}^{\mfw[s]}>0$ and $b_{ks}^{\mfw[s]}>0$. Hence,  $\mcP^{\mfw[s]}=\{(s,k),(k,s),(t,k),(k,t)\}$ and $(s,t),(t,s)\notin \mcP^{\mfw[s]}$. Since $(X^{s;\mfw})^{\top}\tilde{A}^{\mfw}X^{s;\mfw}=((\mfx^{s;\mfw}_{i})^{\top}\tilde{A}^{\mfw}\mfx^{s;\mfw}_{j})_{n\times n}$, it is direct that $(\mfx^{s;\mfw}_{s})^{\top}\tilde{A}^{\mfw}\mfx^{s;\mfw}_{s}=2d_s$ and $(\mfx^{s;\mfw}_{t})^{\top}\tilde{A}^{\mfw}\mfx^{s;\mfw}_{t}=2d_t$. Moreover, it implies that \begin{equation}\begin{array}{ll}
		(\mfx^{s;\mfw}_{k})^{\top}\tilde{A}^{\mfw}\mfx^{s;\mfw}_{k}&= (\mfe_k+b_{sk}^{\mfw}\mfe_s)^{\top}\tilde{A}^{\mfw}(\mfe_k+b_{sk}^{\mfw}\mfe_s)\\
			&= \mfe_k^{\top}\tilde{A}^{\mfw}\mfe_k+2b_{sk}^{\mfw}\mfe_s^{\top}\tilde{A}^{\mfw}\mfe_k+(b_{sk}^{\mfw})^2\mfe_s^{\top}\tilde{A}^{\mfw}\mfe_s=2d_k;\\
			(\mfx^{s;\mfw}_{s})^{\top}\tilde{A}^{\mfw}\mfx^{s;\mfw}_{k}&= -\mfe_s^{\top}\tilde{A}^{\mfw}(\mfe_k+b_{sk}^{\mfw}\mfe_s)=d_s|b_{sk}^{\mfw}|-2d_sb_{sk}^{\mfw}=-d_s|b_{sk}^{\mfw[s]}|;\\ (\mfx^{s;\mfw}_{t})^{\top}\tilde{A}^{\mfw}\mfx^{s;\mfw}_{k}&= \mfe_t^{\top}\tilde{A}^{\mfw}(\mfe_k+b_{sk}^{\mfw}\mfe_s)=-d_tb_{tk}^{\mfw}-d_tb_{ts}^{\mfw}b_{sk}^{\mfw}=-d_t|b_{tk}^{\mfw[s]}|;\\
			(\mfx^{s;\mfw}_{s})^{\top}\tilde{A}^{\mfw}\mfx^{s;\mfw}_{t}&= -\mfe_s^{\top}\tilde{A}^{\mfw}\mfe_{t}=d_s|b_{st}^{\mfw}| =d_s|b_{st}^{\mfw[s]}|.		\end{array}\end{equation}  Then, we have $(X^{s;\mfw})^{\top}\tilde{A}^{\mfw}X^{s;\mfw}=\tilde{A}^{\mfw[s]}$ and $
		(C^{\mfw[s]})^{\top}\tilde{A}_{k_1}C^{\mfw[s]}=\tilde{A}^{\mfw[s]}$.

	 Secondly, let $i=t$ and we obtain that 
		\begin{align}
			\mfx^{t;\mfw}_t=-\mfe_t,\ \mfx^{t;\mfw}_k=\mfe_k,\ \mfx^{s;\mfw}_s=\mfe_s+b_{ts}^{\mfw}\mfe_t.
		\end{align} Furthermore, by \Cref{thm: recursion for tropical signs} $(b)$, we have  $(\varepsilon^{\mfw[t]}_{s},\varepsilon^{\mfw[t]}_{t},\varepsilon^{\mfw[t]}_{k})=(1,-1,-1)$. Note that $b_{st}^{\mfw[t]}>0$, $b_{tk}^{\mfw[t]}>0$ and $b_{ks}^{\mfw[t]}>0$. Therefore,  $\mcP^{\mfw[t]}=\{(s,t),(t,s),(k,t),(t,k)\}$ and $(s,k),(k,s)\notin \mcP^{\mfw[t]}$. It is direct that $(\mfx^{t;\mfw}_{t})^{\top}\tilde{A}^{\mfw}\mfx^{t;\mfw}_{t}=2d_t$ and $(\mfx^{t;\mfw}_{k})^{\top}\tilde{A}^{\mfw}\mfx^{t;\mfw}_{k}=2d_k$. Moreover, it implies that  
		\begin{equation}\begin{array}{ll}
		(\mfx^{t;\mfw}_{s})^{\top}\tilde{A}^{\mfw}\mfx^{t;\mfw}_{s}&= (\mfe_s+b_{ts}^{\mfw}\mfe_t)^{\top}\tilde{A}^{\mfw}(\mfe_s+b_{ts}^{\mfw}\mfe_t)\\
			&= \mfe_s^{\top}\tilde{A}^{\mfw}\mfe_s+2b_{ts}^{\mfw}\mfe_t^{\top}\tilde{A}^{\mfw}\mfe_s+(b_{ts}^{\mfw})^2\mfe_t^{\top}\tilde{A}^{\mfw}\mfe_t=2d_s;\\
			(\mfx^{t;\mfw}_{s})^{\top}\tilde{A}^{\mfw}\mfx^{t;\mfw}_{t}&=-(\mfe_{s}+b_{ts}^{\mfw}\mfe_{t})^{\top}\tilde{A}^{\mfw}\mfe_{t}=d_s|b^{\mfw}_{st}|+2d_sb^{\mfw}_{st}=-d_s|b^{\mfw[t]}_{st}|; \\
				(\mfx^{t;\mfw}_{k})^{\top}\tilde{A}^{\mfw}\mfx^{t;\mfw}_{t}&=-\mfe_{k}^{\top}\tilde{A}^\mfw\mfe_{t}=-d_k|b^{\mfw}_{kt}|=-d_k|b^{\mfw[t]}_{kt}|; \\
				(\mfx^{t;\mfw}_{s})^{\top}\tilde{A}^{\mfw}\mfx^{t;\mfw}_{k}&=(\mfe_{s}+b_{ts}^{\mfw}\mfe_{t})^{\top}\tilde{A}^{\mfw}\mfe_{k}=d_k(b^{\mfw}_{ks}+b^{\mfw}_{kt}b^{\mfw}_{ts})=d_k|b^{\mfw[t]}_{ks}|.
		\end{array}\end{equation} Then, we have $(X^{t;\mfw})^{\top}\tilde{A}^{\mfw}X^{t;\mfw}=\tilde{A}^{\mfw[t]}$ and $
		(C^{\mfw[t]})^{\top}\tilde{A}_{k_1}C^{\mfw[t]}=\tilde{A}^{\mfw[t]}$.

		\item If $\varepsilon_t^{\mfw}<0$, then $\varepsilon_t^{\mfw}b_{tk}^{\mfw}>0$ and $(\varepsilon^{\mfw}_{s},\varepsilon^{\mfw}_{t},\varepsilon^{\mfw}_{k})=(1,-1,-1)$. Hence, it implies that $\mcP^{\mfw}=\{(s,k),(k,s), (t,k), (k,t)\}$ and $(t,s),(s,t)\notin \mcP^{\mfw}$. Then, the proof is similar to case $(1)$ and we may check it directly that $
		(C^{\mfw[s]})^{\top}\tilde{A}_{k_1}C^{\mfw[s]}=\tilde{A}^{\mfw[s]}$ and $
		(C^{\mfw[t]})^{\top}\tilde{A}_{k_1}C^{\mfw[t]}=\tilde{A}^{\mfw[t]}$.
	\end{enumerate} Hence, we complete the proof.
\end{proof}
Now, we can directly get the following theorem about $c$-vectors and quadratic equations by \Cref{duality} if we focus on the diagonal entries of \eqref{qCc}.
\begin{theorem}\label{main equation}
	Let $B \in \mathrm{M}_3(\mathbb{R})$ be a cluster-cyclic exchange matrix with the skew-symmetrizer $D$ and $\mfw=[k_1,\dots,k_r]$ be any reduced mutation sequence with $|\mfw|\geq 1$. Then, we have 
	\begin{enumerate}
		\item If $k_1=1$, then  $\mfc^{\mfw}_{i}$ are solutions to the quadratic equations: $$d_1x_1^2+d_2x_2^2+d_3x_3^2-d_1|b_{12}|x_1x_2+d_2|b_{23}|x_2x_3-d_3|b_{31}|x_1x_3=d_i.\ (i=1,2,3)$$
		\item If $k_1=2$, then $\mfc^{\mfw}_{i}$ are solutions to the quadratic equations: $$d_1x_1^2+d_2x_2^2+d_3x_3^2-d_1|b_{12}|x_1x_2-d_2|b_{23}|x_2x_3+d_3|b_{31}|x_1x_3=d_i.\ (i=1,2,3)$$

		\item If $k_1=3$, then  $\mfc^{\mfw}_{i}$ are solutions to the quadratic equations: $$d_1x_1^2+d_2x_2^2+d_3x_3^2+d_1|b_{12}|x_1x_2-d_2|b_{23}|x_2x_3-d_3|b_{31}|x_1x_3=d_i.\ (i=1,2,3)$$
	\end{enumerate}
In conclusion, a unified equality holds that $\mfc^{\mfw}_{i}(i=1,2,3)$ are solutions to \begin{align}
d_1x_1^2+d_2x_2^2+d_3x_3^2+\varepsilon_1^{[k_1]}\varepsilon_2^{[k_1]}d_1|b_{12}|x_1x_2+\varepsilon_2^{[k_1]}\varepsilon_3^{[k_1]}d_2|b_{23}|x_2x_3+\varepsilon_1^{[k_1]}\varepsilon_3^{[k_1]}d_3|b_{31}|x_1x_3=d_i. \label{big equ}\end{align}
\end{theorem}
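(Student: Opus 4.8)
The plan is to deduce Theorem~\ref{main equation} as an immediate consequence of the quasi-Cartan congruence relation \eqref{qCc} established in Theorem~\ref{duality}, simply by reading off the diagonal entries of both sides. The crucial observation that makes the statement uniform over all $\mfw$ is that the diagonal entries of $\tilde{A}^{\mfw}=DA^{\mfw}$ are all equal to $2d_i$, independently of $\mfw$: by Definition~\ref{quasi-Cartan companion of P} every quasi-Cartan companion $A^{\mfw}$ has diagonal entries $2$, so $(\tilde{A}^{\mfw})_{ii}=2d_i$. Thus, although the off-diagonal data of $\tilde{A}^{\mfw}$ varies with $\mfw$, the diagonal is constant, and this is exactly the source of the fixed right-hand sides $d_i$.

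First I would write $C^{\mfw}=(\mfc_1^{\mfw},\mfc_2^{\mfw},\mfc_3^{\mfw})$ and note that the $(i,i)$ entry of the left-hand side of \eqref{qCc} is precisely $(\mfc_i^{\mfw})^{\top}\tilde{A}_{k_1}\mfc_i^{\mfw}$. Comparing with the $(i,i)$ entry $2d_i$ of $\tilde{A}^{\mfw}$ yields
\[
(\mfc_i^{\mfw})^{\top}\tilde{A}_{k_1}\mfc_i^{\mfw}=2d_i,\qquad i=1,2,3.
\]
Since $A_{k_1}$ is a quasi-Cartan companion of $B$ with symmetrizer $D$, the matrix $\tilde{A}_{k_1}=DA_{k_1}$ is symmetric, so the left-hand side is the value at $\mfx=\mfc_i^{\mfw}$ of the quadratic form $\mfx^{\top}\tilde{A}_{k_1}\mfx=\sum_j (\tilde{A}_{k_1})_{jj}x_j^2+2\sum_{j<l}(\tilde{A}_{k_1})_{jl}x_jx_l$. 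Dividing by $2$ and substituting the explicit entries of $\tilde{A}_{k_1}$ from Definition~\ref{quasi-Cartan companion of P} gives the three displayed quadratic equations. The only cosmetic point is that a cross term such as $-d_1|b_{13}|x_1x_3$ (coming directly from $\tilde{A}_1$) is rewritten as $-d_3|b_{31}|x_1x_3$ using the skew-symmetrizability identity $d_j|b_{jl}|=d_l|b_{lj}|$, so that all three cases share the cyclically normalized coefficients $d_1|b_{12}|$, $d_2|b_{23}|$, $d_3|b_{31}|$.

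To obtain the unified equation \eqref{big equ}, I would observe that the off-diagonal sign $\varepsilon^{k_1}_{jl}$ appearing in $A_{k_1}$ coincides with the product $\varepsilon_j^{[k_1]}\varepsilon_l^{[k_1]}$ of initial tropical signs. This is verified against the initial conditions \eqref{eq: initial conditions of tropical signs}: for $q=k_1$ one has $\varepsilon_{k_1}^{[k_1]}=-1$ and $\varepsilon_j^{[k_1]}=1$ for $j\neq k_1$, so $\varepsilon_j^{[k_1]}\varepsilon_l^{[k_1]}=-1$ exactly when $k_1\in\{j,l\}$, which is the definition of $\varepsilon^{k_1}_{jl}$. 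Substituting this into the coefficient of each cross term (again normalizing via $d_j|b_{jl}|=d_l|b_{lj}|$) produces \eqref{big equ}, and specializing $k_1=1,2,3$ recovers the three individual cases.

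The proof presents essentially no obstacle beyond careful sign bookkeeping, since Theorem~\ref{duality} does all the structural work and the result is genuinely a corollary obtained by extracting diagonal entries. The one place demanding attention is keeping the symmetrizer factors $d_j$ and the absolute values $|b_{jl}|$ consistent when converting between the raw entries of $\tilde{A}_{k_1}$ and the cyclically normalized form, which is handled uniformly by the relation $d_j|b_{jl}|=d_l|b_{lj}|$ and by the sign identity $\varepsilon^{k_1}_{jl}=\varepsilon_j^{[k_1]}\varepsilon_l^{[k_1]}$.
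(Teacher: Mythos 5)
Your proposal is correct and follows essentially the same route as the paper, which derives Theorem~\ref{main equation} directly from the quasi-Cartan congruence $(C^{\mfw})^{\top}\tilde{A}_{k_1}C^{\mfw}=\tilde{A}^{\mfw}$ of Theorem~\ref{duality} by comparing diagonal entries, the right-hand side being $2d_i$ since every $A^{\mfw}$ has diagonal $2$. Your additional bookkeeping (symmetry of $\tilde{A}_{k_1}$, the normalization $d_j|b_{jl}|=d_l|b_{lj}|$, and the identity $\varepsilon^{k_1}_{jl}=\varepsilon_j^{[k_1]}\varepsilon_l^{[k_1]}$ from the initial conditions \eqref{eq: initial conditions of tropical signs}) is exactly what the paper leaves implicit.
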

In addition, it is clear that the initial $c$-vectors $\mfe_i\ (i=1,2,3)$ are also solutions to the quadratic equations as above. Hence, we conclude that every $c$-vector of a rank $3$ real cluster-cyclic exchange matrix is a solution to some quadratic equation.
\begin{proposition}\label{prop: conjectures are true}
Conjecture~\ref{conjecture: for real entries} holds for any cluster-cyclic exchange matrix $B \in \mathrm{M}_{3}(\mathbb{R})$ of rank $3$.
\end{proposition}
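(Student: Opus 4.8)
The plan is to deduce both parts of \Cref{conjecture: for real entries} directly from the two principal results of this section, namely the sign-coherence \Cref{thm: sign-coherency} and the quadratic equations \Cref{main equation}. The key structural observation is that the cluster-cyclic property is preserved under both mutation and transposition, and that mutation does not change the skew-symmetrizer $D$. Thus, if $B \in \mathrm{M}_3(\mathbb{R})$ is cluster-cyclic, then every $B'$ mutation-equivalent to $B$ is again cluster-cyclic with the same symmetrizer $D$, and \Cref{thm: sign-coherency} guarantees that $\mathbf{C}(B)$ is sign-coherent, so that the standing hypothesis of the conjecture is automatically satisfied.

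For part $(a)$, I would argue as follows. Since $B'$ is cluster-cyclic, \Cref{thm: sign-coherency} immediately yields that $\mathbf{C}(B')$ is sign-coherent. For the transpose, first note that $(B')^{\top}$ is skew-symmetrizable (with symmetrizer $D^{-1}$) and that $\mathrm{sign}((B')^{\top}) = -\mathrm{sign}(B')$ is again of cyclic type. It then remains to check that $(B')^{\top}$ satisfies the numerical criterion of \Cref{prop: lowerbound of b}. The products $|b'_{ij}b'_{ji}|$ are clearly invariant under transposition, and using the skew-symmetrizability relation $d_i b'_{ij} = -d_j b'_{ji}$ one computes $b'_{21}b'_{32}b'_{13} = -b'_{12}b'_{23}b'_{31}$, so the triple product is preserved up to sign. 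Hence the defining inequalities for cluster-cyclicity are identical for $B'$ and $(B')^{\top}$, and applying \Cref{thm: sign-coherency} once more shows that $\mathbf{C}((B')^{\top})$ is sign-coherent.

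For part $(b)$, I would invoke \Cref{main equation} applied to $B'$, which is legitimate because $B'$ is cluster-cyclic with the same skew-symmetrizer $D$. That theorem states that each $c$-vector $\mathbf{c}_{i;B'}^{\mathbf{w}}$ solves the quadratic equation \eqref{big equ} with right-hand side $d_i$. Suppose $\mathbf{c}_{i;B'}^{\mathbf{w}} = \alpha \mathbf{e}_j$ for some $\alpha \in \mathbb{R}$ and $j \in \{1,2,3\}$. Substituting this vector into \eqref{big equ}, every cross term $x_a x_b$ with $a \neq b$ vanishes, since at most one coordinate is nonzero, and only the diagonal contribution $d_j \alpha^2$ survives. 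Equating with $d_i$ yields $d_j \alpha^2 = d_i$, whence $\alpha = \pm\sqrt{d_i d_j^{-1}}$, which is exactly the assertion of part $(b)$.

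The bulk of the difficulty has already been absorbed into the proofs of \Cref{thm: sign-coherency} and \Cref{main equation}, so that what remains here is light. The only point requiring genuine care is the verification in part $(a)$ that transposition preserves the cluster-cyclic inequalities, which hinges on the identity $b'_{21}b'_{32}b'_{13} = -b'_{12}b'_{23}b'_{31}$ coming from skew-symmetrizability, together with the standard fact that mutation fixes the symmetrizer $D$. Once these are in place, the proposition follows with no further computation.
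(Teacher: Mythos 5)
Your proposal is correct and follows essentially the same route as the paper: part $(a)$ is reduced to \Cref{thm: sign-coherency}, and part $(b)$ follows by substituting ${\bf c}_{i;B'}^{\bf w}=\alpha{\bf e}_j$ into the quadratic equation \eqref{big equ} of \Cref{main equation}, which collapses to $d_j\alpha^2=d_i$ and gives $\alpha=\pm\sqrt{d_id_j^{-1}}$. Your explicit verification that transposition preserves the cluster-cyclicity criterion of \Cref{prop: lowerbound of b} (via the identity $b'_{21}b'_{32}b'_{13}=-b'_{12}b'_{23}b'_{31}$ from skew-symmetrizability) is a detail the paper leaves implicit when it cites \Cref{thm: sign-coherency} for the transposed matrix, and it is a worthwhile addition.
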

\begin{proof}
The claim ($a$) has already shown by Theorem~\ref{thm: sign-coherency}. 
Now, we focus on proving ($b$). Suppose that ${\bf c}_{i}^{\bf w}=\alpha{\bf e}_j$. Then, we consider substituting it into the equality \eqref{big equ} of Theorem~\ref{main equation}. Although the equality is different depending on the initial mutation, the resulting form becomes $d_j\alpha^{2}=d_i$. Hence, this implies $\alpha=\pm\sqrt{d_id_j^{-1}}$ as we desired.
\end{proof}
In fact, when we consider the integer $G$-matrices of ordinary cluster algebras, we often focus the row sign-coherent property, see \cite{NZ12,GHKK18}. Based on \Cref{thm: sign-coherency} and \Cref{prop: conjectures are true}, for the real cluster-cyclic cases of rank $3$, we can also obtain the row sign-coherence for real $G$-matrices by the \emph{third duality}, see \cite[Proposition 7.1]{AC25}. However, since we only focus on the structures and properties of the $C$-pattern here, we will not introduce many details about the $G$-pattern.
\begin{remark}\label{Lee rmk}
	In \cite{EJLN24}, it was proved that for a cluster-cyclic quiver $Q$ with $3$ vertices, every $c$-vector is a solution to a quadratic equation of the form:
	\begin{align}
		\sum_{i=1}^{n} x^2_i+\sum_{1\leq i<j\leq n} \pm q_{ij}x_ix_j=1,
	\end{align} where $q_{ij}$ is the number of arrows between the vertices $i$ and $j$ in $Q$. However, it is difficult to determine to the exact signs of the terms $x_ix_j$. Now, according to \Cref{main equation}, we give the answer to this problem and generalize it to the real cluster-cyclic skew-symmetrizable case. In fact, the exact signs are  determined by the direction of the first cluster mutation.
\end{remark}
\begin{example}\label{equa exam}
	Let $B=\small \begin{pmatrix}
		0 & 2 & -4\\ -3& 0 & 6 \\ 2 & -2 & 0
	\end{pmatrix}$ be a cluster-cyclic exchange matrix with the skew-symmetrize $D=\diag(3,2,6)$. Take the mutation sequence $\mfw=[3,2,1]$. Then, we have \begin{align}
		B^{[321]}=\small \begin{pmatrix}
		0 & -6 & 32\\ 9& 0 & -282 \\ -16 & 94 & 0
	\end{pmatrix},\ 
C^{[321]}=\small \begin{pmatrix}
		-1 & 6 & 0\\ -9& 53 & 0 \\ -2 & 12 & -1
	\end{pmatrix},
	\end{align} and $
		\mcP^{[321]}=\{(1,2),(2,1), (1,3), (3,1)\}.$
		It implies that \begin{align}
		A^{[321]}=\small \begin{pmatrix}
		2 & -6 & -32\\ -9& 2 & 282 \\ -16 & 94 & 2
	\end{pmatrix},\	\tilde{A}^{[321]}=\small \begin{pmatrix}
		6 & -18 & -96\\ -18& 4 & 564 \\ -96 & 564 & 12
	\end{pmatrix}.
		\end{align} Furthermore, we can directly check that 
		\begin{align}
			(C^{[321]})^{\top}\tilde{A}_{3}C^{[321]}&=\small \begin{pmatrix}
		-1 & -9 & -2\\ 6& 53 & 12 \\ 0 & 0 & -1
	\end{pmatrix} \small \begin{pmatrix}
		6 & 6 & -12\\ 6& 4 & -12 \\ -12 & -12 & 12
	\end{pmatrix}\small\begin{pmatrix}
		-1 & 6 & 0\\ -9& 53 & 0 \\ -2 & 12 & -1
	\end{pmatrix}\\ &= \small \begin{pmatrix}
		6 & -18 & -96\\ -18& 4 & 564 \\ -96 & 564 & 12
	\end{pmatrix}.
		\end{align} Hence, we conclude that $(C^{[321]})^{\top}\tilde{A}_{3}C^{[321]}=\tilde{A}^{[321]}$.
\end{example}

\section{Exchange graphs corresponding to cluster-cyclic exchange 
 matrices}\label{sec: proof of the exchange graph}
In this section, we aim to show the $3$-regular tree structure of the exchange graphs of $C,G$-patterns associated to the rank $3$ real cluster-cyclic exchange matrices.

Fix a cluster-cyclic exchange matrix $B \in \mathrm{M}_3(\mathbb{R})$, we write any $C$-matrix as $C^{\bf w}=(c_{ij}^{\bf w})_{3 \times 3}$. Then, we define its \emph{complexity} as:
\begin{equation}
\kappa(C^{\bf w})=\sum_{1\leq i,j \leq 3}|c_{ij}^{\bf w}|.
\end{equation} Moreover, by the notion of unlabeled $C$-matrix, we can define its complexity as $\kappa([C^{\bf w}])=\kappa(C^{\bf w})$ and it is well-defined.

Now, we introduce a partial order $\leq$ on the mutation sequence $\mathcal{T}$ as:
\begin{equation}\label{eq: partial order on T}
{\bf w}=[k_1,\dots,k_r] \leq {\bf u}=[l_1,\dots,l_{r'}]\ \Longleftrightarrow\ r\leq r^{\prime}\ \&\ \textup{$k_i=l_i$ for any $i=1,\dots,r$.}
\end{equation} Furthermore, if $\mfw \leq \mfu$ and $\mfw \neq \mfu$, then we denote by $\mfw < \mfu$. 
Then, $\kappa$ has the following property.
\begin{lemma}\label{lem: monotonicity of kappa}
Let $B \in \mathrm{M}_3(\mathbb{R})$ be a cluster-cyclic exchange matrix. For any ${\bf w},{\mfu} \in \mathcal{T}$, if ${\bf w} < {\bf u}$, then  $\kappa(C^{\bf w})<\kappa(C^{\mfu})$.
\end{lemma}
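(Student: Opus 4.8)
The plan is to express the complexity as a single positive linear functional applied to the tropical-sign-normalized $c$-vectors, and then to feed in the entrywise monotonicity already established in Lemma~\ref{lem: inequalities}(b). First I would invoke sign-coherence (Theorem~\ref{thm: sign-coherency}): for every $\mathbf{w}$ and every $i$ the vector $\varepsilon_i^{\bf w}{\bf c}_i^{\bf w}$ has nonnegative entries, and these entries are exactly the absolute values $|c_{ji}^{\bf w}|$ of the $i$-th column of $C^{\bf w}$. Writing $\mathbf{1}=(1,1,1)^{\top}$, this rewrites the complexity as
\begin{equation}
\kappa(C^{\bf w})=\sum_{i=1}^{3}\mathbf{1}^{\top}\big(\varepsilon_i^{\bf w}{\bf c}_i^{\bf w}\big).
\end{equation}

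Next I would reduce the claim to single mutation steps. Since $\mathbf{w}<\mathbf{u}$, the sequence $\mathbf{u}$ is obtained from $\mathbf{w}$ by appending indices one at a time, and reducedness forces each appended index to differ from the current last index; this produces a chain $\mathbf{w}=\mathbf{w}_0<\mathbf{w}_1<\cdots<\mathbf{w}_m=\mathbf{u}$ in which $\mathbf{w}_{p+1}=\mathbf{w}_p[k'_p]$ with $k'_p$ not the last index of $\mathbf{w}_p$. By transitivity it is enough to prove a strict increase across one such step, namely $\kappa(C^{\mathbf{w}_p})<\kappa(C^{\mathbf{w}_p[k']})$.

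For a step with $\mathbf{w}_p\neq\emptyset$, Lemma~\ref{lem: inequalities}(b) furnishes the entrywise inequalities $\varepsilon_i^{\mathbf{w}_p}{\bf c}_i^{\mathbf{w}_p}\leq\varepsilon_i^{\mathbf{w}_p[k']}{\bf c}_i^{\mathbf{w}_p[k']}$ for all $i$, together with a strict gain in at least one coordinate for at least one index $i$. Since $\mathbf{1}^{\top}$ has strictly positive coefficients, applying it to each such inequality preserves the order, and it turns the strict coordinatewise gain of a nonnegative difference into a strict scalar gain; summing over $i$ then yields the desired strict inequality. The single step excluded by Lemma~\ref{lem: inequalities}, the initial one $\mathbf{w}_0=\emptyset\to[k]$, I would dispatch by hand: $C^{\emptyset}=I$ gives $\kappa(C^{\emptyset})=3$, whereas the recursion \eqref{eq: recursion of C} reduces $C^{[k]}$ to $J_k+[B^{\emptyset}]^{k\bullet}_{+}$, which inserts a nonzero off-diagonal entry in row $k$ while leaving the three diagonal absolute values equal to $1$, so $\kappa(C^{[k]})>3$. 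Chaining the strict inequalities along the chain gives $\kappa(C^{\mathbf{w}})<\kappa(C^{\mathbf{u}})$.

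The mathematical content is essentially all borrowed from the monotonicity in Lemma~\ref{lem: inequalities}(b); I expect the only delicate points to be bookkeeping rather than genuine obstacles: verifying that strictness survives the passage from the entrywise vector inequality to the scalar functional $\mathbf{1}^{\top}$, and separately treating the empty sequence, which the monotonicity lemma explicitly excludes.
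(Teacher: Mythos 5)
Your proof is correct and follows essentially the same route as the paper, whose proof of this lemma is a one-line appeal to the entrywise monotonicity of Lemma~\ref{lem: inequalities}(b), applied stepwise along the chain from $\mathbf{w}$ to $\mathbf{u}$. The only additions are bookkeeping the paper leaves implicit: rewriting $\kappa$ via sign-coherence as the positive functional $\mathbf{1}^{\top}$ summed over the sign-normalized columns, and checking the initial step $\emptyset\to[k]$ by hand, which Lemma~\ref{lem: inequalities}(b) formally excludes since it requires $\mathbf{w}\neq\emptyset$.
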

\begin{proof}
This is immediately shown by Lemma~\ref{lem: inequalities}~$(b)$.
\end{proof}
\begin{theorem}\label{thm: exchange graph C}
For any cluster-cyclic exchange matrix $B \in \mathrm{M}_3(\mathbb{R})$, the exchange graphs of $C$-pattern $\mfE \mfG(\mfC)$ and the exchange graph of $G$-pattern $\mfE \mfG(\mfG)$ are the $3$-regular trees.
\end{theorem}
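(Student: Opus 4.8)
The plan is to show that the natural quotient map $\pi\colon \mcT \to \mfE\mfG(\mfC)$, sending $\mfw$ to its class $[C^{\mfw}]$, is a graph isomorphism; since $\mcT$ is the $3$-regular tree $\mbT_3$, this identifies $\mfE\mfG(\mfC)$ with $\mbT_3$. As $\pi$ is surjective and edge-preserving by construction, the entire statement reduces to proving that the relation $\sim$ is \emph{trivial}, i.e. that $[C^{\mfw}]=[C^{\mfu}]$ forces $\mfw=\mfu$. Once this is known, every equivalence class is a singleton, the vertex set of the quotient is literally $\mcT$, its edges are exactly those of $\mcT$, and hence $\mfE\mfG(\mfC)=\mbT_3$ is the $3$-regular tree.

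The engine of the argument is the complexity $\kappa$ together with its monotonicity (Lemma~\ref{lem: monotonicity of kappa}). Since $\kappa$ is invariant under column permutations, it descends to a well-defined function on unlabeled $C$-matrices, so $[C^{\mfw}]=[C^{\mfu}]$ implies $\kappa(C^{\mfw})=\kappa(C^{\mfu})$. By Lemma~\ref{lem: monotonicity of kappa}, for every nonempty $\mfw=[k_1,\dots,k_r]$ the parent $\mfw[k_r]$ (deleting the last index $k_r$) is the \emph{unique} neighbor in $\mcT$ with strictly smaller complexity, while the two children have strictly larger complexity; for $\mfw=\emptyset$ all three neighbors are larger. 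Thus each vertex carries a distinguished descending direction, namely its last index.

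I would then take a counterexample $(\mfw,\mfu)$ with $\mfw\neq\mfu$ and $C^{\mfu}=\tilde{\sigma}(C^{\mfw})$ for some $\sigma\in\mathfrak{S}_3$, minimizing $|\mfw|+|\mfu|$. Neither is empty: if $\mfw=\emptyset$, then $\kappa(C^{\mfu})=\kappa(I)$ forces $\mfu=\emptyset$ by monotonicity, contradicting $\mfw\neq\mfu$. For the inductive step I use the compatibility relation $\mu_{\sigma(k)}(\tilde{\sigma}(C^{\mfw}))=\tilde{\sigma}(\mu_{k}(C^{\mfw}))$ together with $\kappa\circ\tilde{\sigma}=\kappa$ to get, for every direction $j$,
\begin{equation}
\kappa(\mu_{j}(C^{\mfu}))=\kappa(\tilde{\sigma}(\mu_{\sigma^{-1}(j)}(C^{\mfw})))=\kappa(\mu_{\sigma^{-1}(j)}(C^{\mfw})).
\end{equation}
Hence $j$ is the descending direction at $\mfu$ if and only if $\sigma^{-1}(j)$ is the descending direction at $\mfw$; by uniqueness this forces $\sigma^{-1}(l_{r'})=k_{r}$, where $k_r$ and $l_{r'}$ are the last indices of $\mfw$ and $\mfu$. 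Applying the compatibility relation once more yields $C^{\mfu'}=\tilde{\sigma}(C^{\mfw'})$ for the parents $\mfw'=\mfw[k_r]$ and $\mfu'=\mfu[l_{r'}]$, so $[C^{\mfw'}]=[C^{\mfu'}]$ with strictly smaller total length. Minimality forces $\mfw'=\mfu'$, whence $C^{\mfw'}=C^{\mfw'}P_{\sigma}$; since $C^{\mfw'}$ is invertible (its determinant is $\pm 1$ by Proposition~\ref{prop: fundamental properties under sign-coherency}), we get $P_{\sigma}=I$, i.e. $\sigma=\mathrm{id}$. But then $l_{r'}=\sigma(k_r)=k_r$ and $\mfw=\mfw'[k_r]=\mfu'[l_{r'}]=\mfu$, a contradiction. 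Therefore $\sim$ is trivial and $\mfE\mfG(\mfC)=\mbT_3$.

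For the $G$-pattern I would invoke Proposition~\ref{prop: two exchange graphs are the same}: its hypotheses (sign-coherence and Conjecture~\ref{conjecture: for real entries}) are furnished in the cluster-cyclic case by Theorem~\ref{thm: sign-coherency} and Proposition~\ref{prop: conjectures are true}, so $\mfE\mfG(\mfG)$ is graph isomorphic to $\mfE\mfG(\mfC)$ and is likewise the $3$-regular tree. I expect the main obstacle to be the descent step, namely forcing the relabeling permutation $\sigma$ to carry the last index of $\mfw$ to that of $\mfu$ so that passing to parents preserves $[C^{\mfw}]=[C^{\mfu}]$; this is exactly where the monotonicity of $\kappa$ is indispensable, since it singles out the parent as the unique complexity-decreasing neighbor and thereby rigidifies $\sigma$ throughout the induction.
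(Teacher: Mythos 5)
Your proof is correct, but it takes a genuinely different route from the paper's. The paper works directly on the quotient graph: connectedness and degree $3$ are asserted from the definition of the quotient and the compatibility relation $\mu_{\sigma(k)}(\tilde{\sigma}(C^{\mfw}))=\tilde{\sigma}(\mu_k(C^{\mfw}))$, loops are excluded by Lemma~\ref{lem: monotonicity of kappa}, and cycles of length at least $3$ are excluded by an extremal argument: pick a vertex of maximal complexity $\kappa$ in a purported cycle; it has two neighbors in the cycle, at most one of which can be its parent, so the other is a child of strictly larger complexity, a contradiction. You instead prove the sharper rigidity statement that the defining equivalence relation is trivial, i.e.\ $[C^{\mfw}]=[C^{\mfu}]$ forces $\mfw=\mfu$ and $\sigma=\mathrm{id}$, via a minimal-counterexample descent: the unique $\kappa$-decreasing direction (again Lemma~\ref{lem: monotonicity of kappa}) forces $\sigma$ to match the last indices, the compatibility relation transports the identification to the parents, and invertibility of $C$-matrices (Proposition~\ref{prop: fundamental properties under sign-coherency}, available by Theorem~\ref{thm: sign-coherency}) then kills $\sigma$. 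Both arguments rest on the same two pillars---the monotonicity of $\kappa$ and the compatibility relation---and both reduce the $G$-pattern case identically via Propositions~\ref{prop: two exchange graphs are the same} and~\ref{prop: conjectures are true}. What your version buys: it establishes that the quotient map $\mcT\to\mfE\mfG(\mfC)$ is an isomorphism (no nontrivial unlabeled periodicity at all), which in particular makes rigorous the $3$-regularity and the exclusion of multi-edges, points the paper dispatches with ``by the definition of mutation and unlabeled $C$-matrix.'' What the paper's version buys is brevity: the extremal-vertex argument never needs to track the permutation $\sigma$ or induct on lengths.
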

\begin{proof}
By Proposition~\ref{prop: two exchange graphs are the same} and Proposition~\ref{prop: conjectures are true}, it suffices to show the case of $C$-pattern. 
Firstly by the definition of mutation and unlabeled $C$-matrix, we may show that the degree of each vertex is $3$, which means that $\mfE \mfG(\mfC)$ is connected. By \Cref{lem: monotonicity of kappa}, we conclude that there is no loop in $\mfE \mfG(\mfC)$.

Secondly, we focus on  proving that there is no finite  cycle with length no less than $3$ in the exchange graph $\mfE \mfG(\mfC)$. Suppose that there is a  cycle $\mcO$ in $\mfE \mfG(\mfC)$. Take one vertex $[C^{\bf w}] \in \mcO$ such that $\kappa([C^{\bf w}])$ is largest. Since $\mcO$ is a  cycle with length no less than $3$, there are two indices $i,j\in \{1,2,3\}$, such that $C^{{\bf w}[i]},C^{{\bf w}[j]} \in \mcO$. Without loss of generality, we may assume that ${\bf w}<{\bf w}[i]$. (Note that if ${\bf w}[i]<{\bf w}$, then $j$ should satisfy ${\bf w}<{\bf w}[j]$.) Then, by Lemma~\ref{lem: monotonicity of kappa}, we have $\kappa([C^{\bf w}])<\kappa([C^{{\bf w}[i]})]$, which contradicts with the fact that $\kappa([C^{\bf w}])$ is largest in $\mcO$. Hence, we conclude that $\mfE \mfG(\mfC)$ is a $3$-regular tree.
\end{proof}
Here, we proved \Cref{thm: exchange graph C} based on the complexity of $C$-matrices, which is motivated by Tomoki Nakanishi.

\begin{remark}
	In \cite{War14}, it was proved that the exchange graph of a cluster-cyclic quiver with three vertices is a $3$-regular tree by using the method of \emph{fork}, which may not be suitable for general cases. Here, we use the monotonicity of $c$-vectors and generalize it to the real cluster-cyclic skew-symmetrizable case.
\end{remark}
Note that if we restrict to the ordinary (integer) cluster algebras, these three kinds of exchange graphs $\mfE \mfG(\bf{\Sigma}), \mfE \mfG(\mfC)$ and $\mfE \mfG(\mfG)$ are isomorphic by \Cref{Synchronicity Nak}. In particular, based on \Cref{thm: exchange graph C}, we can immediately get the following corollary.
\begin{corollary}\label{cor: exchange graph}
For the  cluster-cyclic skew-symmetrizable cluster algebra $\mcA$ of rank $3$, the following statements hold:
\begin{enumerate}
	\item The exchange graph of cluster pattern $\mfE \mfG(\bf{\Sigma})$ is a $3$-regular tree.
	\item The exchange graph of $C$-pattern $\mfE \mfG(\mfC)$ is a $3$-regular tree.
	\item The exchange graph of $G$-pattern $\mfE \mfG(\mfG)$ is a $3$-regular tree.
\end{enumerate}
	
\end{corollary}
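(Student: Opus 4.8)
The plan is to deduce this corollary directly from Theorem~\ref{thm: exchange graph C} together with the Synchronicity property (Theorem~\ref{Synchronicity Nak}). The key observation is that an ordinary cluster algebra $\mcA$ of rank $3$ which is cluster-cyclic and skew-symmetrizable has an initial exchange matrix $B \in \mathrm{M}_3(\mathbb{Z})$, and every such integer matrix is in particular a \emph{real} cluster-cyclic exchange matrix. Thus $B$ simultaneously satisfies the hypotheses of both results, and the entire content of the corollary is supplied by chaining them.

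First I would note that statements $(2)$ and $(3)$ are immediate: since every integer skew-symmetrizable matrix is a real one, $B$ falls within the scope of Theorem~\ref{thm: exchange graph C}, which already asserts that $\mfE \mfG(\mfC)$ and $\mfE \mfG(\mfG)$ are $3$-regular trees. For statement $(1)$, I would invoke Theorem~\ref{Synchronicity Nak}: because $B$ is an \emph{integer} initial exchange matrix, the equivalences $\Sigma^{\mfw}=\sigma(\Sigma^{\mfu}) \Longleftrightarrow C^{\mfw}=\tilde{\sigma}(C^{\mfu}) \Longleftrightarrow G^{\mfw}=\tilde{\sigma}(G^{\mfu})$ hold for all $\mfw,\mfu \in \mcT$ and $\sigma \in \mathfrak{S}_3$. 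Consequently the ordinary exchange graph $\mfE \mfG(\mathbf{\Sigma})$ is graph isomorphic to both $\mfE \mfG(\mfC)$ and $\mfE \mfG(\mfG)$. Combining this isomorphism with statement $(2)$ (equivalently $(3)$) transfers the $3$-regular tree structure to $\mfE \mfG(\mathbf{\Sigma})$, establishing $(1)$.

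There is essentially no genuine obstacle at this stage, and I would present the argument as a short corollary rather than a standalone theorem. The substantive work is carried entirely by Theorem~\ref{thm: exchange graph C}, whose proof rests on the monotonicity of $c$-vectors (Lemma~\ref{lem: inequalities}) repackaged via the complexity function $\kappa$ (Lemma~\ref{lem: monotonicity of kappa}), together with Proposition~\ref{prop: two exchange graphs are the same} and Proposition~\ref{prop: conjectures are true} identifying the two pattern-exchange graphs. The only point demanding a moment's care is to confirm that the integrality hypothesis of Synchronicity is actually met; this holds by definition of an ordinary cluster algebra, and once it is checked, the graph isomorphisms compose and the $3$-regularity propagates along them to all three exchange graphs.
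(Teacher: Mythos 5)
Your proposal is correct and matches the paper's own argument exactly: the paper also deduces the corollary by noting that an integer cluster-cyclic exchange matrix is in particular a real one (so Theorem~\ref{thm: exchange graph C} gives statements $(2)$ and $(3)$), and then applies the Synchronicity theorem (Theorem~\ref{Synchronicity Nak}), valid since the initial exchange matrix is integer, to identify $\mfE \mfG(\bf{\Sigma})$ with the other two exchange graphs and so obtain statement $(1)$.
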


\begin{remark}
We may consider another combinatorial structure called the {\em $g$-vector fan} \cite{Rea14} or {\em $G$-fan} \cite{Nak23}, which is a geometric realization of the ordinary cluster complex. For integer case, this combinatorial structure is the same as the one in $C$-, $G$-patterns. However, for non-integer skew-symmetrizable case, the combinatorial structure may be different. In \cite{AC25}, it was shown that the combinatorial structure in $g$-vector fan is more closely related to the {\em modified $C$-pattern}. The proof of Theorem~\ref{thm: exchange graph C} also works well for modified $C$-pattern. So, we may also obtain that there is no periodicity in the $g$-vector fan for the real cluster-cyclic exchange matrices of rank $3$.
\end{remark}

\section{Structures of tropical signs} \label{sec: Structure of tropical signs}
In this section, we exhibit the novel structure and properties of tropical signs for rank $3$ real cluster-cyclic exchange matrices.
\subsection{The binary components of tropical signs}\

Recall that we define the partial order $\leq$ on $\mathcal{T}$ by \eqref{eq: partial order on T}.
For any ${\bf w} \in \mathcal{T}$, we denote by 
\begin{equation}
\mathcal{T}^{\geq {\bf w}}=\{{\bf u} \in \mathcal{T} \mid {\bf w} \leq {\bf u}\}.
\end{equation}
In particular, a subset $\mathcal{T}^{\geq [i]}$ ($i=1,2,3$) is called a {\em subtree on the direction $i$}.
To simplify the statement, we fix an initial direction of mutation $i=1,2,3$, and we define the maps $K,S,T:\mathcal{T}\setminus\{\emptyset\} \to \{1,2,3\}$ by $k=K({\bf w})$, $s=S({\bf w})$, and $t=T({\bf w})$, where $k,s,t \in \{1,2,3\}$ are the indices defined in Theorem~\ref{thm: recursion for tropical signs}. We introduce the following notations in $\mathcal{T}^{\geq [i]}$ instead of the mutation ${\bf w}[k]$.
\begin{definition}\label{def: monoid action of M}
Let $\mathcal{M}$ be a free monoid generated by $S$ and $T$. (Here, we view the maps $S,T:\mathcal{T}\backslash\{\emptyset\}\to\{1,2,3\}$ as formal symbols.) Each element of $\mathcal{M}$ is called a {\em word}. Fix a direction $i=1,2,3$ of the initial mutation, and we introduce a right monoid action of $\mathcal{M}$ on $\mathcal{T}^{\geq [i]}$ as
\begin{equation}
\mfw \cdot M={\bf w}M={\bf w}[M({\bf w})] \quad ({\bf w} \in \mathcal{T}^{\geq [i]},\ M=S,T).
\end{equation} 
\end{definition}
For any word $X=M_1\cdots M_l \in \mathcal{M}$ with $M_i =S,T$, a {\em subword} of $X$ is a word of the form $M_{k}M_{k+1}\cdots M_{r}$ for some $1\leq k \leq r \leq l$. In particular, every subword which starts from the initial letter (namely, $k=1$) is called a {\em prefix} of $X$.
\par
We write the {\em length} of a word $X=M_1M_2\cdots M_l \in \mathcal{M}$ ($M_i = S,T$, $n_i \in \mathbb{Z}_{\geq 0}$) as $|X|=l$. Then, for any ${\bf w} \in \mathcal{T}\setminus\{\emptyset\}$, since $K({\bf w}) \neq M({\bf w})$ ($M=S,T$), we have
\begin{equation}
|{\bf w}X|=|{\bf w}|+|X|,
\end{equation}
where $|{\bf w}|$ and $|{\bf w}X|$ are the length for reduced sequences. Moreover, since \begin{align}\{K({\bf w}),S({\bf w}),T({\bf w)}\}=\{1,2,3\},\end{align} we have $[i]\mathcal{M}=\mathcal{T}^{\geq [i]}$ for any $i=1,2,3$. In particular, for any ${\bf w} \in \mathcal{T}^{\geq [i]}$, there exists a unique $X \in \mathcal{M}$ such that ${\bf w}=[i]X$.
\par
As in (\ref{eq: partial order on T}), we introduce a partial order $\leq$ on $\mathcal{M}$ as
\begin{equation}
X \leq Y \Longleftrightarrow \textup{$X$ is a prefix of $Y$.}
\end{equation}
When we focus on the tropical signs, it is important to separate the set $\mathcal{T}$ as follows.
\begin{definition}\label{definition: tree of sequences}
Fix the subtree $\mathcal{T}^{\geq [i]}$ on the direction $i=1,2,3$. We write $\mathcal{T}^{< [i]S^{\infty}}$ as the set of all $[i]S^{n} \in \mathcal{T}^{\geq [i]}$ with $n \in \mathbb{Z}_{\geq 0}$, and we call it the {\em trunk} of $\mathcal{T}^{\geq [i]}$. We say that a subset $\mathcal{T}^{\geq [i]X}$ ($X \in \mathcal{M}$) is a {\em branch} if the letter $T$ appears in the word $X$. Moreover, a branch of the form $\mathcal{T}^{\geq [i]S^nT}$ ($n \in \mathbb{Z}_{\geq 0}$) is called the {\em $n$-th maximal branch} of $\mathcal{T}^{\geq [i]}$.
\end{definition}
For any $i=1,2,3$, the set of the trunk and all maximal branches $\{\mathcal{T}^{< [i]S^{\infty}}\}\cup\{\mathcal{T}^{\geq [i]S^nT} \mid n \in \mathbb{Z}_{\geq 0}\}$ is a partition of $\mathcal{T}^{\geq [i]}$. Moreover, the set of $\mathcal{T}^{\geq [1]},\mathcal{T}^{\geq [2]},\mathcal{T}^{\geq [3]}$ and $\{\emptyset\}$ is a partition of $\mcT$. In fact, some conditions of tropical signs depend on whether ${\bf w}$ is in a trunk or a branch.
\par
The following notation is useful to study a rank $3$ cluster-cyclic exchange matrix.
\begin{definition}\label{def: K,S,T labbeling}
Fix a cluster-cyclic exchange matrix $B \in \mathrm{M}_3(\mathbb{R})$. For each ${\bf w} \in \mathcal{T}\backslash\{\emptyset\}$, define $K({\bf w})$, $S({\bf w})$, and $T({\bf w})$ as $k$, $s$, and $t$ defined in Theorem~\ref{thm: recursion for tropical signs}~(b), respectively. For any $M,M'=K,S,T$, we write $\varepsilon_{M}^{\bf w}=\varepsilon_{M({\bf w})}^{\bf w}$, $b_{MM'}^{\bf w}=b_{M({\bf w}),M'({\bf w})}^{\bf w}$, and so on.
\end{definition}
The index $t=T({\bf w})$ can be distinguished from $s=S({\bf w})$ by $\varepsilon_{t}^{\bf w} = \varepsilon_{k}^{\bf w}$ or $\varepsilon_{t}^{\bf w}\mathrm{sign}(b_{kt}^{\bf w})=1$, see Theorem~\ref{thm: recursion for tropical signs}~$(a)$. Now, we may simplify this condition as follows.
\begin{lemma}\label{lemma: recursion for K,S,T}
Let $B$ be a cluster-cyclic exchange matrix and ${\bf w} \in \mathcal{T}\backslash\{\emptyset\}$. Then, the following statements hold.
\\
\textup{$(a)$} We have
\begin{equation}\label{eq: identification of trunks and branches}
\begin{aligned}
\textup{${\bf w}$ is in a trunk}\ &\Longleftrightarrow\ \varepsilon_{T}^{\bf w} \neq \varepsilon_{K}^{\bf w} \neq \varepsilon_{S}^{\bf w} \ \Longleftrightarrow\ \varepsilon_{T}^{\bf w} \mathrm{sign}(b_{KT}^{\bf w})=1,\\
\textup{${\bf w}$ is in a branch}\ &\Longleftrightarrow\ \varepsilon_{T}^{\bf w} = \varepsilon_{K}^{\bf w} \neq \varepsilon_{S}^{\bf w} \ \Longleftrightarrow\ \varepsilon_{T}^{\bf w}\mathrm{sign}(b_{KT}^{\bf w})=-1.
\end{aligned}
\end{equation}
\textup{$(b)$}\ We have
\begin{equation}\label{eq: recursion of S mutation for K,S,T}
(K({\bf w}S),S({\bf w}S),T({\bf w}S))=(S({\bf w}),K({\bf w}),T({\bf w})),
\end{equation}
and
\begin{equation}\label{eq: recursion of T mutation for K,S,T}
(K({\bf w}T),S({\bf w}T),T({\bf w}T))=\begin{cases}
(T({\bf w}),S({\bf w}),K({\bf w})), & \textup{if ${\bf w}$ is in a trunk},\\
(T({\bf w}),K({\bf w}),S({\bf w})), & \textup{if ${\bf w}$ is in a branch}.
\end{cases}
\end{equation}
\end{lemma}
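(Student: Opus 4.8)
The plan is to derive both parts from the tropical-sign recursion of Theorem~\ref{thm: recursion for tropical signs}, together with the cyclic sign pattern $\mathrm{sign}(B^{\mathbf{w}})=(-1)^{|\mathbf{w}|}\mathrm{sign}(B)$, organized as a single induction on $|\mathbf{w}|$. Before the induction I would dispose of the \emph{second} equivalence in $(a)$ — relating $\varepsilon_T^{\mathbf{w}}\neq\varepsilon_K^{\mathbf{w}}\neq\varepsilon_S^{\mathbf{w}}$ to $\varepsilon_T^{\mathbf{w}}\mathrm{sign}(b_{KT}^{\mathbf{w}})=1$ — since it holds at every $\mathbf{w}$ and needs no induction. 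The key observation is that in each row of a cyclic matrix the two off-diagonal entries have opposite signs, so $\mathrm{sign}(b_{KT}^{\mathbf{w}})=-\mathrm{sign}(b_{KS}^{\mathbf{w}})$. As $\varepsilon_S^{\mathbf{w}}\mathrm{sign}(b_{KS}^{\mathbf{w}})=-1$ by the definition of $S(\mathbf{w})$ in \eqref{eq: choice of s}, and tropical signs are $\pm1$ by Theorem~\ref{thm: sign-coherency}, this gives $\varepsilon_T^{\mathbf{w}}\mathrm{sign}(b_{KT}^{\mathbf{w}})=\varepsilon_T^{\mathbf{w}}\varepsilon_S^{\mathbf{w}}$. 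Since $\varepsilon_K^{\mathbf{w}}\neq\varepsilon_S^{\mathbf{w}}$ always holds, the chain $\varepsilon_T^{\mathbf{w}}\neq\varepsilon_K^{\mathbf{w}}\neq\varepsilon_S^{\mathbf{w}}$ is equivalent to $\varepsilon_T^{\mathbf{w}}=\varepsilon_S^{\mathbf{w}}$, i.e.\ to $\varepsilon_T^{\mathbf{w}}\varepsilon_S^{\mathbf{w}}=1$, and the branch characterization is the complementary case.

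It then remains to identify these sign conditions with trunk/branch membership and to prove $(b)$, which I would thread into one induction on $|\mathbf{w}|$ with dependency ``$(a)$ at $\mathbf{w}\Rightarrow(b)$ at $\mathbf{w}\Rightarrow(a)$ at the children $\mathbf{w}S,\mathbf{w}T$''. In the base case $\mathbf{w}=[i]$ I would read the tropical signs off Corollary~\ref{sign-recursion}, obtaining $\varepsilon_K^{[i]}=-1$ and $\varepsilon_S^{[i]}=\varepsilon_T^{[i]}=+1$, which matches $[i]=[i]S^0$ being in the trunk. For the inductive step, assuming $(a)$ at $\mathbf{w}$, I would first establish $(b)$ at $\mathbf{w}$. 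For the $S$-mutation the new last index is $s$, so $K(\mathbf{w}S)=s$, and I would pin down $S(\mathbf{w}S)=k$ by verifying $\varepsilon_k^{\mathbf{w}S}\mathrm{sign}(b_{sk}^{\mathbf{w}S})=-1$ from $\varepsilon_k^{\mathbf{w}S}=-\varepsilon_k^{\mathbf{w}}$, the sign flip $b_{sk}^{\mathbf{w}S}=-b_{sk}^{\mathbf{w}}$, and the cyclic identity; uniqueness in Theorem~\ref{thm: recursion for tropical signs}$(a)$ then forces $T(\mathbf{w}S)=t$. For the $T$-mutation $K(\mathbf{w}T)=t$, and the recursion \eqref{eq: recursion for tropical signs} gives $\varepsilon_k^{\mathbf{w}T}=\varepsilon_k^{\mathbf{w}}$, $\varepsilon_s^{\mathbf{w}T}=\varepsilon_s^{\mathbf{w}}$, $\varepsilon_t^{\mathbf{w}T}=-\varepsilon_t^{\mathbf{w}}$; deciding which of $k,s$ has tropical sign differing from $\varepsilon_t^{\mathbf{w}T}$ selects $S(\mathbf{w}T)$, and this comparison depends precisely on whether $\varepsilon_t^{\mathbf{w}}=\varepsilon_s^{\mathbf{w}}$ (trunk) or $\varepsilon_t^{\mathbf{w}}=\varepsilon_k^{\mathbf{w}}$ (branch), yielding the two lines of \eqref{eq: recursion of T mutation for K,S,T}.

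With $(b)$ in hand at $\mathbf{w}$, I would propagate $(a)$ to $\mathbf{w}S$ and $\mathbf{w}T$. Combinatorially, appending $S$ keeps a trunk element in the trunk, while appending $T$ — or being already in a branch — produces a branch; on the arithmetic side I would substitute the child values of $(K,S,T)$ from $(b)$ together with the mutated signs into $\varepsilon_T\neq\varepsilon_K$, and check in each trunk/branch subcase that the resulting equality or inequality of signs matches the membership just described. Because every nonempty $\mathbf{w}$ lies in exactly one trunk or branch, and the second equivalence of $(a)$ is already known at every node, this closes the induction. I expect the main obstacle to be the interleaving itself rather than any single computation: the statement of $(b)$ is phrased in the trunk/branch language of $(a)$, whereas $(a)$ at a node is deduced from $(b)$ at its parent, so the whole argument only goes through when $(a)$ and $(b)$ are proved by one simultaneous induction; keeping the sign comparisons and the cyclic orientation consistent across the $S$- and $T$-branches is where the bookkeeping must be done carefully.
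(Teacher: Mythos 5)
Your proof is correct, and its inductive core coincides with the paper's own argument: the same simultaneous induction with base case $[i]$, the step $(a)_{\bf w}\Rightarrow(b)_{\bf w}$, and the propagation $(a)_{\bf w},(b)_{\bf w}\Rightarrow(a)_{{\bf w}S},(a)_{{\bf w}T}$, with essentially the same sign computations (determining $S({\bf w}S)$ from the mutated $b$-sign, and determining $S({\bf w}T)$, $T({\bf w}T)$ by which of the two candidates has tropical sign disagreeing with the new $\varepsilon_K$ — valid because $S$ must disagree with $K$ by Theorem~\ref{thm: recursion for tropical signs}~$(a)$ and exactly one candidate does). The one genuine difference is your non-inductive treatment of the second equivalence in $(a)$: since the two off-diagonal entries in row $K$ of a cyclic matrix have opposite signs, the defining relation $\varepsilon_{S}^{\bf w}\mathrm{sign}(b_{KS}^{\bf w})=-1$ from \eqref{eq: choice of s} gives $\mathrm{sign}(b_{KT}^{\bf w})=\varepsilon_{S}^{\bf w}$, hence $\varepsilon_{T}^{\bf w}\mathrm{sign}(b_{KT}^{\bf w})=\varepsilon_{T}^{\bf w}\varepsilon_{S}^{\bf w}$, so the two sign criteria in \eqref{eq: identification of trunks and branches} are equivalent at every node before any induction starts. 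The paper instead carries both criteria through the induction, which forces it to track $\mathrm{sign}(b_{KT})$ under mutation (e.g.\ $\mathrm{sign}(b_{KT}^{{\bf w}S})=\mathrm{sign}(b_{KT}^{\bf w})$, and a trunk/branch case split for $\mathrm{sign}(b_{KT}^{{\bf w}T})$) when proving $(a)$ at the children; your decomposition buys a lighter induction in which only the comparison of $\varepsilon_T$ with $\varepsilon_K$ must be propagated. One small point to make explicit when writing this up: to invoke uniqueness in Theorem~\ref{thm: recursion for tropical signs}~$(a)$ and conclude $S({\bf w}S)=K({\bf w})$, you must verify both defining conditions, i.e.\ also $\varepsilon_{K({\bf w})}^{{\bf w}S}\neq\varepsilon_{K}^{{\bf w}S}$, which is immediate from \eqref{eq: recursion for tropical signs} since both signs flip under the $S$-mutation.
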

\begin{proof}
For each ${\bf w}$, define the statement $(a)_{\bf w}$ and $(b)_{\bf w}$ as $(a)$ and $(b)$ for this ${\bf w}$. We show that the following three statements by induction.
\begin{itemize}
\item $(a)_{[i]}$ for any $i=1,2,3$.
\item $(a)_{{\bf w}} \Rightarrow (b)_{{\bf w}}$ for any ${\bf w} \in \mathcal{T}\backslash\{\emptyset\}$.
\item $(a)_{{\bf w}},(b)_{{\bf w}} \Rightarrow (a)_{{\bf w}S},(a)_{{\bf w}T}$ for any ${\bf w} \in \mathcal{T}\backslash\{\emptyset\}$.
\end{itemize}
\par
The first one is shown by (\ref{eq: initial conditions of tropical signs}). Note that $[i]$ is in a trunk.
\par
Next, we show $(a)_{\bf w} \Rightarrow (b)_{\bf w}$. By Theorem~\ref{thm: recursion for tropical signs}, we have
\begin{equation}\label{eq: next tropical signs}
\begin{array}{ll}
(\varepsilon_{K({\bf w})}^{{\bf w}S},\varepsilon_{S({\bf w})}^{{\bf w}S},\varepsilon_{T({\bf w})}^{{\bf w}S})=(-\varepsilon_{K}^{\bf w},-\varepsilon_{S}^{\bf w},\varepsilon_{T}^{\bf w}),\\
(\varepsilon_{K({\bf w})}^{{\bf w}T},\varepsilon_{S({\bf w})}^{{\bf w}T},\varepsilon_{T({\bf w})}^{{\bf w}T})=(\varepsilon_{K}^{\bf w},\varepsilon_{S}^{\bf w},-\varepsilon_{T}^{\bf w}).
\end{array}
\end{equation}
Firstly, we show (\ref{eq: recursion of S mutation for K,S,T}), in particular to show $S({\bf w}S)=K({\bf w})$. Note that $K({\bf w}S)=S({\bf w})$ holds by definition. Thus, if we show $S({\bf w}S)=K({\bf w})$, then $T({\bf w}S)$ is determined by the other one. The first condition $\varepsilon_{K}^{{\bf w}S} \neq \varepsilon_{K({\bf w})}^{{\bf w}S}$ is shown by (\ref{eq: next tropical signs}). We also have
\begin{equation}
\varepsilon_{K({\bf w})}^{{\bf w}S}\mathrm{sign}(b_{K({\bf w}S),K({\bf w})}^{{\bf w}S})\overset{(\ref{eq: next tropical signs})}{=}-\varepsilon_{K}^{\bf w}\mathrm{sign}(b_{S({\bf w}),K({\bf w})}^{{\bf w}S}).
\end{equation}
Since $B$ is cluster-cyclic, we have $\mathrm{sign}(b_{S({\bf w}),K({\bf w})}^{{\bf w}S})=-\mathrm{sign}(b_{S({\bf w}),K({\bf w})}^{{\bf w}})=\mathrm{sign}(b_{K({\bf w}),S({\bf w})}^{\bf w})$. Moreover, by Theorem~\ref{thm: recursion for tropical signs}~$(a)$, we have $\varepsilon_{K}^{\bf w}=-\varepsilon_{S}^{\bf w}$. Thus, we have
\begin{equation}
\varepsilon_{K({\bf w})}^{{\bf w}S}\mathrm{sign}(b_{K({\bf w}S),K({\bf w})}^{{\bf w}S})=\varepsilon_{S}^{\bf w}\mathrm{sign}(b_{KS}^{\bf w})=-1,
\end{equation}
where the last equality can be obtained by Theorem~\ref{thm: recursion for tropical signs}~$(a)$. Thus, we have $S({\bf w}S)=K({\bf w})$. Next, we show (\ref{eq: recursion of T mutation for K,S,T}). By definition, we obtain that $K(\mfw T)=T(\mfw)$. When ${\bf w}$ is in a trunk, then we may show $\varepsilon_{K({\bf w})}^{{\bf w}T}=\varepsilon_{K}^{{\bf w}T}$ as follows:
\begin{equation}
\varepsilon_{K({\bf w})}^{{\bf w}T}\overset{(\ref{eq: next tropical signs})}{=}\varepsilon_{K}^{\bf w}\overset{(a)_{\bf w}}{=}-\varepsilon_{T}^{\bf w}\overset{(\ref{eq: next tropical signs})}{=}\varepsilon_{T({\bf w})}^{{\bf w}T}=\varepsilon_{K}^{{\bf w}T}.
\end{equation}
This equality implies that $T({\bf w}T)=K({\bf w})$ by \Cref{thm: recursion for tropical signs} $(a)$. When ${\bf w}$ is in a branch, we may show $\varepsilon_{S({\bf w})}^{{\bf w}T}=\varepsilon_{K}^{{\bf w}T}$ as follows:
\begin{equation}
\varepsilon_{S({\bf w})}^{{\bf w}T}\overset{(\ref{eq: next tropical signs})}{=}\varepsilon_{S}^{\bf w}=-\varepsilon_{K}^{\bf w}\overset{(a)_{\bf w}}{=}-\varepsilon_{T}^{\bf w}\overset{(\ref{eq: next tropical signs})}{=}\varepsilon_{T({\bf w})}^{{\bf w}T}=\varepsilon_{K}^{{\bf w}T},
\end{equation}
where the second equality is obtained by Theorem~\ref{thm: recursion for tropical signs}~$(a)$. Moreover, this implies that $T({\bf w}T)=S({\bf w})$.
\par
Lastly, we prove $(a)_{{\bf w}},(b)_{{\bf w}} \Rightarrow (a)_{{\bf w}S},(a)_{{\bf w}T}$. By $(b)_{\bf w}$, (\ref{eq: next tropical signs}), and $\varepsilon_{S}^{\bf w}=-\varepsilon_{K}^{\bf w}$, we have
\begin{equation}
\begin{aligned}
(\varepsilon_{K}^{{\bf w}S},\varepsilon_{T}^{{\bf w}S})&=(-\varepsilon_{S}^{\bf w},\varepsilon_{T}^{\bf w})=(\varepsilon_{K}^{\bf w},\varepsilon_{T}^{\bf w}),
\\
(\varepsilon_{K}^{{\bf w}T},\varepsilon_{T}^{{\bf w}T})&=\begin{cases}
(-\varepsilon_{T}^{\bf w},\varepsilon_{K}^{\bf w}), & \textup{if ${\bf w}$ is in a trunk},\\
(-\varepsilon_{T}^{\bf w},\varepsilon_{S}^{\bf w}) = (-\varepsilon_{T}^{\bf w},-\varepsilon_{K}^{\bf w}), & \textup{if ${\bf w}$ is in a branch}.
\end{cases}
\end{aligned}
\end{equation}
Thus, we may check the first equivalent condition. Moreover, since $B$ is cluster-cyclic, we have $\mathrm{sign}(b_{KT}^{{\bf w}S})=\mathrm{sign}(b_{S({\bf w}),T({\bf w})}^{{\bf w}S})=-\mathrm{sign}(b_{ST}^{\bf w})=\mathrm{sign}(b_{KT}^{\bf w})$,
\begin{equation}
\varepsilon_{T}^{{\bf w}S}\mathrm{sign}(b_{KT}^{{\bf w}S})=\varepsilon_{T}^{\bf w}\mathrm{sign}(b_{KT}^{\bf w}).
\end{equation}
Hence, we proved the claim $(a)_{{\bf w}S}$. For $(a)_{{\bf w}T}$, by doing a similar argument and using
\begin{equation}
\mathrm{sign}(b_{KT}^{{\bf w}T})=\begin{cases}
\mathrm{sign}(b_{KT}^{\bf w}), & \textup{if ${\bf w}$ is in a trunk},\\
-\mathrm{sign}(b_{KT}^{\bf w}), & \textup{if ${\bf w}$ is in a branch},
\end{cases}
\end{equation}
we may show that $\varepsilon_{T}^{{\bf w}T}\mathrm{sign}(b_{KT}^{{\bf w}T})=-1$. Here, note that ${\bf w}T$ is in a branch. This completes the proof.
\end{proof}

\subsection{Tropical signs in a trunk}\

Firstly, we give the explicit tropical signs labeled by $K,S,T$.
\begin{lemma}\label{lem: tropical signs in trunks as K,S,T}
Let ${\bf w} \in \mathcal{T}^{< [i]S^{\infty}}$ be in a trunk. Then, we have
\begin{equation}\label{eq: tropical signs in trunks as K,S,T labeling}
\varepsilon_{K}^{\bf w}=-1,\quad \varepsilon_{S}^{\bf w}=\varepsilon_{T}^{\bf w}=1.
\end{equation}
\end{lemma}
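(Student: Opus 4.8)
The plan is to prove Lemma~\ref{lem: tropical signs in trunks as K,S,T} by induction on $n$, writing each trunk element uniquely as ${\bf w}=[i]S^{n}$ with $n\in\mathbb{Z}_{\geq 0}$ (this unique expression is guaranteed by $[i]\mathcal{M}=\mathcal{T}^{\geq[i]}$). The quantity I will propagate is exactly the triple $(\varepsilon_K^{\bf w},\varepsilon_S^{\bf w},\varepsilon_T^{\bf w})=(-1,1,1)$. For the base case $n=0$, so ${\bf w}=[i]$, I would read the tropical signs directly off the initial conditions \eqref{eq: initial conditions of tropical signs}: since the last index of $[i]$ is $i$ itself, we have $K([i])=i$, hence $\varepsilon_K^{[i]}=\varepsilon_i^{[i]}=-1$, while $S([i])$ and $T([i])$ are the two remaining indices, both carrying tropical sign $+1$, so $\varepsilon_S^{[i]}=\varepsilon_T^{[i]}=1$. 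This establishes the claim at $n=0$, and I note that $[i]$ is indeed in a trunk.

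For the inductive step I assume the triple holds for ${\bf w}=[i]S^{n}$ and establish it for ${\bf w}S=[i]S^{n+1}$, using two already-proven recursions. First, the index relabeling \eqref{eq: recursion of S mutation for K,S,T} from Lemma~\ref{lemma: recursion for K,S,T} gives $(K({\bf w}S),S({\bf w}S),T({\bf w}S))=(S({\bf w}),K({\bf w}),T({\bf w}))$; that is, the $S$-mutation interchanges the roles of the $K$- and $S$-positions and fixes the $T$-position. Second, since ${\bf w}S={\bf w}[S({\bf w})]$ by Definition~\ref{def: monoid action of M}, the first line of the tropical-sign recursion \eqref{eq: recursion for tropical signs} yields $\varepsilon_{K({\bf w})}^{{\bf w}S}=-\varepsilon_K^{\bf w}$, $\varepsilon_{S({\bf w})}^{{\bf w}S}=-\varepsilon_S^{\bf w}$, and $\varepsilon_{T({\bf w})}^{{\bf w}S}=\varepsilon_T^{\bf w}$. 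Combining the two, I would compute $\varepsilon_K^{{\bf w}S}=\varepsilon_{S({\bf w})}^{{\bf w}S}=-\varepsilon_S^{\bf w}=-1$, then $\varepsilon_S^{{\bf w}S}=\varepsilon_{K({\bf w})}^{{\bf w}S}=-\varepsilon_K^{\bf w}=1$, and finally $\varepsilon_T^{{\bf w}S}=\varepsilon_{T({\bf w})}^{{\bf w}S}=\varepsilon_T^{\bf w}=1$, using the inductive hypothesis $(\varepsilon_K^{\bf w},\varepsilon_S^{\bf w},\varepsilon_T^{\bf w})=(-1,1,1)$ at the last step of each line. This recovers $(\varepsilon_K^{{\bf w}S},\varepsilon_S^{{\bf w}S},\varepsilon_T^{{\bf w}S})=(-1,1,1)$ and closes the induction.

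Conceptually, the point is that the pattern $(-1,1,1)$ is a fixed pattern of the $S$-action: the swap $K\leftrightarrow S$ in \eqref{eq: recursion of S mutation for K,S,T} exactly compensates the simultaneous sign flip at both the $K$- and $S$-positions coming from \eqref{eq: recursion for tropical signs}. The only real obstacle is the bookkeeping, namely keeping straight that the sign flips are attached to the \emph{old} labels $K({\bf w}),S({\bf w})$ whereas the conclusion must be phrased in terms of the \emph{new} labels $K({\bf w}S),S({\bf w}S)$; confusing these two would produce the wrong signs. There is no analytic difficulty here, since all the substantive content has already been carried by Theorem~\ref{thm: recursion for tropical signs} and Lemma~\ref{lemma: recursion for K,S,T}, and the trunk restriction means I only ever need to propagate along the single generator $S$.
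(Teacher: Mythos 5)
Your proof is correct and follows essentially the same route as the paper: induction on $n$ for ${\bf w}=[i]S^{n}$, with the base case read off the initial conditions \eqref{eq: initial conditions of tropical signs} and the inductive step combining the index swap \eqref{eq: recursion of S mutation for K,S,T} with the sign recursion \eqref{eq: recursion for tropical signs}. The only difference is that the paper computes just the $K$-component explicitly and declares the other two components ``similar,'' whereas you write out all three; your bookkeeping of old versus new labels matches the paper's intended argument exactly.
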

\begin{proof}
If $n=0$, we may show it by (\ref{eq: initial conditions of tropical signs}). Suppose that the claim holds for some ${\bf w}=[i]S^n$. Then, we may show
\begin{equation}
\varepsilon_{K}^{{\bf w}S}=\varepsilon_{S({\bf w})}^{{\bf w}S}=-\varepsilon_{S}^{\bf w}=-1.
\end{equation}
Note that the first equality may be obtained by $K({\bf w}S)=S({\bf w})$ (Lemma~\ref{lemma: recursion for K,S,T}~(b)). The other case $\varepsilon_{S}^{\bf w}=\varepsilon_{T}^{\bf w}=1$ is similar.
\end{proof}
Then, we can obtain the expression of tropical signs for trunks explicitly.
\begin{proposition} \label{tropical trunk}
Fix any $i\in \{1,2,3\}$ and set $k_0=K([i])(=i)$, $s_0=S([i])$, and $t_0=T([i])$. For any $n\in \mbN$, the following statements hold.
\\
\textup{$(a)$} We have
\begin{equation}
(K([i]S^n),S([i]S^n))=\begin{cases}
(k_0,s_0), & \textup{if $n$ is even},\\
(s_0,k_0), & \textup{if $n$ is odd},
\end{cases}
\quad
T([i]S^n)=t_0.
\end{equation}
\\
\textup{$(b)$} We have
\begin{equation}
\varepsilon_{k_0}^{[i]S^n}=(-1)^{n+1},\ \varepsilon_{s_0}^{[i]S^n}=(-1)^n,\ \varepsilon_{t_0}^{[i]S^n}=1.
\end{equation}
\end{proposition}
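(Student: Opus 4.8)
The plan is to establish both statements by induction on $n$, using the $S$-recursion of \Cref{lemma: recursion for K,S,T}~(b) as the driving mechanism and then reading off the tropical signs from \Cref{lem: tropical signs in trunks as K,S,T}. Part (a) is the combinatorial heart: once the behaviour of the labels $K,S,T$ along the trunk is pinned down, part (b) becomes essentially a translation of the trunk sign formula \eqref{eq: tropical signs in trunks as K,S,T labeling} from the moving labels $K,S,T$ to the fixed indices $k_0,s_0,t_0$.

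For part (a), I would argue by induction on $n$. The base case $n=0$ is nothing but the definitions $k_0=K([i])$, $s_0=S([i])$, $t_0=T([i])$; since $0$ is even the claim reads $(K([i]),S([i]))=(k_0,s_0)$ and $T([i])=t_0$, which holds. For the inductive step I write $[i]S^{n+1}=([i]S^n)S$ and apply \eqref{eq: recursion of S mutation for K,S,T}, namely $(K(\mathbf{w}S),S(\mathbf{w}S),T(\mathbf{w}S))=(S(\mathbf{w}),K(\mathbf{w}),T(\mathbf{w}))$ with $\mathbf{w}=[i]S^n$. This exchanges the values of $K$ and $S$ while leaving $T$ untouched. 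Hence $T([i]S^n)=t_0$ for every $n$, and the pair $(K,S)$ flips between $(k_0,s_0)$ and $(s_0,k_0)$ exactly according to the parity of $n$, which is the assertion of (a).

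For part (b), I would observe that every $\mathbf{w}=[i]S^n$ lies in the trunk $\mathcal{T}^{<[i]S^\infty}$ by definition, so \Cref{lem: tropical signs in trunks as K,S,T} applies and gives $\varepsilon_K^{\mathbf{w}}=-1$ and $\varepsilon_S^{\mathbf{w}}=\varepsilon_T^{\mathbf{w}}=1$. Since $T([i]S^n)=t_0$ for all $n$ by part (a), this immediately yields $\varepsilon_{t_0}^{[i]S^n}=1$. For the remaining two indices I substitute the value of $(K([i]S^n),S([i]S^n))$ from part (a): when $n$ is even the label $K$ points at $k_0$ and $S$ at $s_0$, so $\varepsilon_{k_0}^{[i]S^n}=-1=(-1)^{n+1}$ and $\varepsilon_{s_0}^{[i]S^n}=1=(-1)^n$; when $n$ is odd the two roles are interchanged, giving $\varepsilon_{k_0}^{[i]S^n}=1=(-1)^{n+1}$ and $\varepsilon_{s_0}^{[i]S^n}=-1=(-1)^n$. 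In both parities this matches the claimed formula.

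There is no serious obstacle here; the argument is a bookkeeping induction, and the only point requiring care is keeping the parity alignment between the alternating labels $(K,S)$ and the fixed indices $(k_0,s_0)$ consistent throughout. As an alternative to invoking \Cref{lem: tropical signs in trunks as K,S,T} in part (b), one could instead run the sign recursion of \Cref{thm: recursion for tropical signs} directly along the $S$-direction: each $S$-mutation flips $\varepsilon_K$ and $\varepsilon_S$ and fixes $\varepsilon_T$, and combined with part (a) this propagates the base values from \eqref{eq: initial conditions of tropical signs} to the general formula. I would present whichever of the two is shorter in the final write-up.
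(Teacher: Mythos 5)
Your proof is correct and follows essentially the same route as the paper: part (a) is the induction on $n$ driven by the $S$-recursion \eqref{eq: recursion of S mutation for K,S,T} of Lemma~\ref{lemma: recursion for K,S,T}~(b), and part (b) then follows by combining (a) with the trunk sign formula of Lemma~\ref{lem: tropical signs in trunks as K,S,T}. The paper states this in one line; you have merely written out the bookkeeping (base case, parity alignment) that the paper leaves implicit.
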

\begin{proof}
We may check $(a)$ by Lemma~\ref{lemma: recursion for K,S,T}. Furthermore,  since $(a)$ holds, $(b)$ is immediately proved by Lemma~\ref{lem: tropical signs in trunks as K,S,T}.
\end{proof}

\subsection{Tropical signs in a branch}\

By considering Lemma~\ref{lemma: recursion for K,S,T}, properties of signs depend on whether ${\bf w}$ is in a trunk or a branch. Here, we focus on properties in a branch. Fix a subtree $\mathcal{T}^{\geq [i]}$ on the direction $i=1,2,3$. Let $\mathcal{T}^{\geq [i]X_0}$ be a branch of $\mathcal{T}^{\geq [i]}$, that is, we suppose that $X_0 \in \mathcal{M}\backslash\langle S \rangle$ has the letter $T$.
\par
If we consider $K,S,T$ labeling, we may give a simple formula.
\begin{proposition} \label{tropical branch}
If ${\bf w} \in \mathcal{T}\backslash\{\emptyset\}$ is in a branch, then for any $M=K,S,T$, we have
\begin{equation}\label{eq: recursion for tropical signs as K,S,T labeling}
\varepsilon_{M}^{{\bf w}S}=\varepsilon_{M}^{\bf w},\quad
\varepsilon_{M}^{{\bf w}T}=-\varepsilon_{M}^{\bf w}.
\end{equation}
In particular, for any $X \in \mathcal{M}$, we have
\begin{equation}\label{eq: tropical signs in branches as K,S,T labeling}
\varepsilon_{K}^{[i]S^nTX}=\varepsilon_{T}^{[i]S^nTX}=-(-1)^{\#_{T}(X)},\quad \varepsilon_{S}^{[i]S^nTX}=(-1)^{\#_{T}(X)},
\end{equation}
where $\#_{T}(X)$ is the number of the letter $T$ appearing in $X$.
\end{proposition}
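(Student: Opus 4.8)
The plan is to first establish the one-step recursion formulas (\ref{eq: recursion for tropical signs as K,S,T labeling}) and then bootstrap them into the closed form (\ref{eq: tropical signs in branches as K,S,T labeling}) by a short induction on the word length. The entire argument is bookkeeping over the permutation action already recorded in Lemma~\ref{lemma: recursion for K,S,T}: part (b) tells us how the labels $K,S,T$ are permuted by a single $S$- or $T$-mutation, the sign equivalence (\ref{eq: identification of trunks and branches}) characterises branches, and the fixed-index transformation (\ref{eq: next tropical signs}) (i.e.\ Theorem~\ref{thm: recursion for tropical signs}~(b)) tells us how a sign at a fixed index flips. No new geometric input is needed.

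For the one-step formulas, fix ${\bf w}$ in a branch, so by (\ref{eq: identification of trunks and branches}) and Theorem~\ref{thm: recursion for tropical signs}~(a) one has $\varepsilon_K^{\bf w}=\varepsilon_T^{\bf w}=-\varepsilon_S^{\bf w}$. The delicate point is to keep apart the two meanings of the symbol $\varepsilon_M$: the quantity $\varepsilon_M^{{\bf w}S}$ abbreviates $\varepsilon_{M({\bf w}S)}^{{\bf w}S}$, evaluated at the \emph{relabelled} index $M({\bf w}S)$, whereas (\ref{eq: next tropical signs}) records the signs $\varepsilon_{M({\bf w})}^{{\bf w}S}$ sitting at the \emph{old} indices. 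I would therefore always substitute the relabelling from Lemma~\ref{lemma: recursion for K,S,T}~(b) first. For the $S$-mutation, $(K,S,T)({\bf w}S)=(S,K,T)({\bf w})$ gives for instance $\varepsilon_K^{{\bf w}S}=\varepsilon_{S({\bf w})}^{{\bf w}S}\overset{(\ref{eq: next tropical signs})}{=}-\varepsilon_S^{\bf w}=\varepsilon_K^{\bf w}$, and the analogous substitutions for the $S$- and $T$-slots yield $\varepsilon_M^{{\bf w}S}=\varepsilon_M^{\bf w}$ for every $M$. For the branch $T$-mutation, $(K,S,T)({\bf w}T)=(T,K,S)({\bf w})$, and the same computation together with $\varepsilon_K^{\bf w}=\varepsilon_T^{\bf w}$ gives $\varepsilon_M^{{\bf w}T}=-\varepsilon_M^{\bf w}$. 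This proves (\ref{eq: recursion for tropical signs as K,S,T labeling}); moreover appending $S$ or $T$ to a word already containing a $T$ again contains a $T$, so ${\bf w}S$ and ${\bf w}T$ stay in a branch and the recursion may be iterated.

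To obtain (\ref{eq: tropical signs in branches as K,S,T labeling}) I would induct on $|X|$. For the base case $X=\emptyset$, start from the trunk values $\varepsilon_K^{[i]S^n}=-1$ and $\varepsilon_S^{[i]S^n}=\varepsilon_T^{[i]S^n}=1$ of Lemma~\ref{lem: tropical signs in trunks as K,S,T}, and apply the \emph{trunk} $T$-mutation $(K,S,T)([i]S^nT)=(T,S,K)([i]S^n)$ from Lemma~\ref{lemma: recursion for K,S,T}~(b) together with (\ref{eq: next tropical signs}); this produces $\varepsilon_K^{[i]S^nT}=\varepsilon_T^{[i]S^nT}=-1$ and $\varepsilon_S^{[i]S^nT}=1$, which is the claim for $\#_T(\emptyset)=0$. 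For the inductive step write $X=X'M$ with $M\in\{S,T\}$; since $[i]S^nTX'$ is in a branch, (\ref{eq: recursion for tropical signs as K,S,T labeling}) applies, and appending $S$ leaves all signs and $\#_T$ unchanged whereas appending $T$ negates all signs and raises $\#_T$ by one, so the factor $(-1)^{\#_T(X)}$ is reproduced exactly. Finally $\varepsilon_K^{[i]S^nTX}=\varepsilon_T^{[i]S^nTX}$ is automatic from (\ref{eq: identification of trunks and branches}) since $[i]S^nTX$ lies in a branch, and $\varepsilon_S=-\varepsilon_K$ always holds, so in fact only the single sign $\varepsilon_K$ must be tracked.

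I do not expect a genuine obstacle: the whole proposition is a formal consequence of Lemma~\ref{lemma: recursion for K,S,T}. The one place that can hide a sign error—and hence the step deserving most care—is the translation between the label-indexed signs $\varepsilon_M^{{\bf w}S},\varepsilon_M^{{\bf w}T}$ and the fixed-index signs of (\ref{eq: next tropical signs}); writing out each relabelling explicitly before substituting, as above, eliminates that risk.
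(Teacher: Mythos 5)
Your proposal is correct and follows essentially the same route as the paper's own proof: both establish the one-step recursion by combining the relabelling rules of Lemma~\ref{lemma: recursion for K,S,T}~(b) with the fixed-index sign recursion of Theorem~\ref{thm: recursion for tropical signs}~(b) and the branch condition $\varepsilon_K^{\bf w}=\varepsilon_T^{\bf w}$, then compute the base case $[i]S^nT$ from the trunk values of Lemma~\ref{lem: tropical signs in trunks as K,S,T}, and finish by induction on $|X|$. Your explicit attention to the distinction between the relabelled-index signs $\varepsilon_{M({\bf w}S)}^{{\bf w}S}$ and the fixed-index signs $\varepsilon_{M({\bf w})}^{{\bf w}S}$ is precisely the care point the paper's sample computation illustrates.
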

\begin{proof}
By (\ref{eq: recursion for tropical signs}) and Lemma~\ref{lemma: recursion for K,S,T} $(b)$, we may directly check (\ref{eq: recursion for tropical signs as K,S,T labeling}). For example, we may show $\varepsilon_{K}^{{\bf w}T}=-\varepsilon_{K}^{\bf w}$ by $K({\bf w}T)=T({\bf w})$, that is 
\begin{equation}
\varepsilon_{K}^{{\bf w}T}=\varepsilon_{T({\bf w})}^{{\bf w}T}\overset{(\ref{eq: recursion for tropical signs})}{=}-\varepsilon_{T}^{\bf w}\overset{(\ref{eq: identification of trunks and branches})}{=}-\varepsilon_{K}^{\bf w}.
\end{equation}
By (\ref{eq: recursion for tropical signs}) and (\ref{eq: tropical signs in trunks as K,S,T labeling}), we may show
\begin{equation}
\begin{aligned}
\varepsilon_{K}^{[i]S^nT}&\overset{(\ref{eq: recursion of T mutation for K,S,T})}{=}\varepsilon_{T([i]S^n)}^{([i]S^n)T}\overset{(\ref{eq: recursion for tropical signs})}{=}-\varepsilon_{T}^{[i]S^n}\overset{(\ref{eq: tropical signs in trunks as K,S,T labeling})}{=}-1,\\
\varepsilon_{S}^{[i]S^nT}&\overset{(\ref{eq: recursion of T mutation for K,S,T})}{=}\varepsilon_{S([i]S^n)}^{[i]S^nT}\overset{(\ref{eq: recursion for tropical signs})}{=}\varepsilon_{S}^{[i]S^n}\overset{(\ref{eq: tropical signs in trunks as K,S,T labeling})}{=}1,\\
\varepsilon_{T}^{[i]S^nT}&\overset{(\ref{eq: recursion of T mutation for K,S,T})}{=}\varepsilon_{K([i]S^n)}^{[i]S^nT}\overset{(\ref{eq: recursion for tropical signs})}{=}\varepsilon_{K}^{[i]S^n}\overset{(\ref{eq: tropical signs in trunks as K,S,T labeling})}{=}-1.
\end{aligned}
\end{equation}
Hence, by using (\ref{eq: recursion for tropical signs as K,S,T labeling}) we may show (\ref{eq: tropical signs in branches as K,S,T labeling}) by the induction on $|X|$.
\end{proof}
Next, we consider tropical signs labeled by $\{1,2,3\}$. We start with the following observation. 

\begin{lemma}\label{lemma: well definedness of the action for E}
Fix a branch $\mathcal{T}^{\geq {\bf w}_0}$ for some ${\bf w}_0$. For any ${\bf w} \in \mathcal{T}^{\geq {\bf w}_{0}}$, the following statements hold.
\\
(a)\ The number of $j \in \{1,2,3\}\backslash\{k\}$ satisfying $\varepsilon_j^{\bf w}=\varepsilon_k^{\bf w}$ (and $\varepsilon_j^{\bf w} \neq \varepsilon_k^{\bf w}$) is precisely one.
In particular, two indices $S({\bf w})$ and $T({\bf w})$ can be recovered by the information $(\varepsilon_1^{\bf w},\varepsilon_2^{\bf w},\varepsilon_3^{\bf w};K({\bf w}))$.
\\
(b)\ For any $M=S,T$, the tuple $(\varepsilon_1^{{\bf w}M},\varepsilon_2^{{\bf w}M},\varepsilon_3^{{\bf w}M};K({\bf w}M))$ can be determined by the previous one $(\varepsilon_1^{\bf w},\varepsilon_2^{\bf w},\varepsilon_3^{\bf w};K({\bf w}))$.
\end{lemma}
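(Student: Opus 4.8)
The plan is to reduce everything to the branch characterization in \Cref{lemma: recursion for K,S,T}~$(a)$ together with the explicit sign recursion in \Cref{thm: recursion for tropical signs}. The crucial conceptual point is that the restriction to a branch is precisely what makes the recovery in $(a)$ possible: in a trunk both indices $S({\bf w})$ and $T({\bf w})$ would carry the sign opposite to $\varepsilon_K^{\bf w}$, so they could not be told apart by signs alone, whereas in a branch exactly one of them agrees with $\varepsilon_K^{\bf w}$.

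For $(a)$, I would first note that every ${\bf w}\in\mathcal{T}^{\geq{\bf w}_0}$ is in a branch in the sense of \Cref{definition: tree of sequences}: writing ${\bf w}_0=[i]X_0$ with the letter $T$ occurring in $X_0$, any ${\bf w}=[i]X\geq{\bf w}_0$ has $X_0$ as a prefix of $X$, so $T$ occurs in $X$ as well. By \Cref{lemma: recursion for K,S,T}~$(a)$, being in a branch is equivalent to $\varepsilon_T^{\bf w}=\varepsilon_K^{\bf w}\neq\varepsilon_S^{\bf w}$. Since $\{S({\bf w}),T({\bf w})\}=\{1,2,3\}\setminus\{K({\bf w})\}$, this says that among the two indices $j\neq k$ exactly one, namely $j=T({\bf w})$, satisfies $\varepsilon_j^{\bf w}=\varepsilon_k^{\bf w}$, while $j=S({\bf w})$ satisfies $\varepsilon_j^{\bf w}\neq\varepsilon_k^{\bf w}$. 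This is exactly the counting claim. The recovery statement is then immediate: from the datum $(\varepsilon_1^{\bf w},\varepsilon_2^{\bf w},\varepsilon_3^{\bf w};K({\bf w}))$ one reads off $T({\bf w})$ as the unique index $j\neq k$ with $\varepsilon_j^{\bf w}=\varepsilon_k^{\bf w}$, and $S({\bf w})$ as the remaining index.

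For $(b)$, I would combine the index recursion with the sign recursion. By \Cref{lemma: recursion for K,S,T}~$(b)$ we have $K({\bf w}S)=S({\bf w})$ and $K({\bf w}T)=T({\bf w})$, and both $S({\bf w})$ and $T({\bf w})$ are already recovered from the input datum by part $(a)$; hence $K({\bf w}M)$ is determined for $M=S,T$. For the signs, \Cref{thm: recursion for tropical signs}~$(b)$ gives the explicit rules
\begin{equation}
(\varepsilon_k^{{\bf w}S},\varepsilon_s^{{\bf w}S},\varepsilon_t^{{\bf w}S})=(-\varepsilon_k^{\bf w},-\varepsilon_s^{\bf w},\varepsilon_t^{\bf w}),\qquad (\varepsilon_k^{{\bf w}T},\varepsilon_s^{{\bf w}T},\varepsilon_t^{{\bf w}T})=(\varepsilon_k^{\bf w},\varepsilon_s^{\bf w},-\varepsilon_t^{\bf w}),
\end{equation}
in which $k,s,t$ stand for $K({\bf w}),S({\bf w}),T({\bf w})$. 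Since all three of these indices are known from the input datum by part $(a)$, each formula expresses the new tuple $(\varepsilon_1^{{\bf w}M},\varepsilon_2^{{\bf w}M},\varepsilon_3^{{\bf w}M})$ purely in terms of $(\varepsilon_1^{\bf w},\varepsilon_2^{\bf w},\varepsilon_3^{\bf w})$ and those indices, which gives the claim.

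Since both steps are direct applications of earlier results, I do not expect a serious obstacle; the only point requiring care is the very first one in $(a)$, namely confirming that membership in the fixed branch $\mathcal{T}^{\geq{\bf w}_0}$ forces ${\bf w}$ to be in a branch so that \Cref{lemma: recursion for K,S,T}~$(a)$ applies. I would also remark that, because appending either $S$ or $T$ keeps the letter $T$ present in the word, ${\bf w}M$ again lies in $\mathcal{T}^{\geq{\bf w}_0}$; this is what allows the update rule in $(b)$ to be iterated and thus to descend to a genuine action on the set of sign-tuples, as needed for the monoid-action formalism of \Cref{def: monoid action of M}.
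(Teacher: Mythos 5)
Your proposal is correct and follows essentially the same route as the paper: part $(a)$ is obtained from the branch characterization $\varepsilon_T^{\bf w}=\varepsilon_K^{\bf w}\neq\varepsilon_S^{\bf w}$ of Lemma~\ref{lemma: recursion for K,S,T}~$(a)$ (after noting that every element of $\mathcal{T}^{\geq {\bf w}_0}$ lies in a branch), and part $(b)$ from the sign recursion of Theorem~\ref{thm: recursion for tropical signs} combined with $(a)$. Your write-up merely makes explicit two details the paper leaves implicit (that membership in $\mathcal{T}^{\geq{\bf w}_0}$ forces the word to contain the letter $T$, and that $K({\bf w}M)=M({\bf w})$), which is fine.
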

\begin{proof}
The claim (a) follows from the fact that $S({\bf w})$ and $T({\bf w})$ exist and Lemma~\ref{lemma: recursion for K,S,T}. Namely, we choose $S({\bf w})$ as the index $j \in \{1,2,3\}\backslash\{k\}$ such that $\varepsilon_{j}^{\bf w} \neq \varepsilon_{k}^{\bf w}$. The claim (b) follows from Theorem~\ref{thm: recursion for tropical signs} and (a). (The equality (\ref{eq: recursion for tropical signs}) tells how we determine $\varepsilon_j^{{\bf w}M}$. Note that $S({\bf w})$ and $T({\bf w})$ are determined by $(\varepsilon_1^{\bf w},\varepsilon_2^{\bf w},\varepsilon_3^{\bf w};K({\bf w}))$.)
\end{proof}
Thus, tropical signs are determined by $(\varepsilon_1^{\bf w},\varepsilon_2^{\bf w},\varepsilon_3^{\bf w};K({\bf w}))$ if we focus on a branch. (Namely, we do not have to consider $B$-matrices and ${\bf w}$.) For any ${\bf w} \in \mathcal{T}^{\geq {\bf w}_0}$, set
\begin{equation}
E^{\bf w}=(\varepsilon_1^{\bf w},\varepsilon_2^{\bf w},\varepsilon_3^{\bf w};K({\bf w})).
\end{equation}

\begin{definition}\label{def: collection of signs}
For each branch $\mathcal{T}^{\geq {\bf w}_0}$, let $\mathcal{E}^{\geq {\bf w}_0}$ be the collection of all $E^{\bf w}$ indexed by ${\bf w} \in \mathcal{T}^{\geq {\bf w}_0}$. We define maps $K,S,T:\mathcal{E}^{\geq {\bf w}_0}\to\{1,2,3\}$ and a right monoid action of $\mathcal{M}$ on $\mathcal{E}^{\geq {\bf w}_0}$ as follows. For each $E^{\bf w} \in \mathcal{E}^{\geq {\bf w}_0}$, we define maps $M(E^{\bf w})=M({\bf w})$ for any $M=K,S,T$ and for any $X \in \mathcal{M}$, we define a right monoid action $(E^{\bf w})^{X}=E^{{\bf w}X}$ on $\mathcal{E}^{\geq {\bf w}_0}$.
\end{definition}
\begin{lemma}\label{lem: set E}
For any branch $\mathcal{T}^{\geq {\bf w}_0}$, the set of all $E^{\bf w}$ with ${\bf w} \in \mathcal{T}^{\geq {\bf w}_0}$ is given by
\begin{equation}\label{eq: set E}
\begin{aligned}
&\ \{(\varepsilon_1,\varepsilon_2,\varepsilon_3;k) \mid\ \textup{The number of $j \in \{1,2,3\}$ with $\varepsilon_j^{\bf w} \neq \varepsilon_k^{\bf w}$ is $1$}\}\\
=&\ \left\{\begin{aligned}
(+,+,-;1),(+,-,+;1),(-,-,+;1),(-,+,-;1)\\
(-,+,+;2),(+,+,-;2),(+,-,-;2),(-,-,+;2)\\
(+,-,+;3),(-,+,+;3),(-,+,-;3),(+,-,-;3)
\end{aligned}\right\}.
\end{aligned}
\end{equation}
In particular, this set does not depend on the choice of a branch $\mathcal{T}^{\geq {\bf w}_0}$.
\end{lemma}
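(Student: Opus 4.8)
The plan is to prove the two directions of \eqref{eq: set E}---containment in the twelve-element set and attainment of every listed tuple---while reading off the branch-independence as a byproduct. First I would dispose of the inclusion ``$\subseteq$''. For any ${\bf w}$ lying in a branch, \Cref{lemma: well definedness of the action for E}(a) asserts that exactly one index $j \in \{1,2,3\}\setminus\{K({\bf w})\}$ satisfies $\varepsilon_j^{\bf w} \neq \varepsilon_{K({\bf w})}^{\bf w}$, so $E^{\bf w}=(\varepsilon_1^{\bf w},\varepsilon_2^{\bf w},\varepsilon_3^{\bf w};K({\bf w}))$ lies in the set described on the first line of \eqref{eq: set E}. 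Enumerating that set---three choices of $k$, two choices of $\varepsilon_k$, and two choices of which remaining index carries the opposite sign---produces precisely the twelve tuples displayed, confirming that the right-hand side is the correct list.

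The crux is to recast the problem as a finite dynamical system independent of $B$ and ${\bf w}$. Every element of $\mathcal{T}^{\geq {\bf w}_0}$ is again in a branch, so by \Cref{thm: recursion for tropical signs} and the branch-case of \Cref{lemma: recursion for K,S,T}(b) the passage from $E^{\bf w}$ to $E^{{\bf w}S}$ and $E^{{\bf w}T}$ depends only on the tuple $E^{\bf w}$; this is exactly \Cref{lemma: well definedness of the action for E}(b). Writing $s$ (resp.\ $t$) for the index $\neq k$ with $\varepsilon_s\neq\varepsilon_k$ (resp.\ $\varepsilon_t=\varepsilon_k$), the generator $S$ flips the signs at $k$ and $s$ and resets the marked index to $s$, while $T$ flips the sign at $t$ and resets the marked index to $t$. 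I would then verify directly that each map sends the twelve-element set into itself and is a bijection: $S$ is an involution, since applying it twice flips the same pair of signs and restores the marked index; and $T$ is injective, since from an output $(\eta_1,\eta_2,\eta_3;t)$ one recovers the marked index as the unique $j\neq t$ with $\eta_j\neq\eta_t$ and the original signs by flipping coordinate $t$. Consequently the submonoid of the symmetric group on the twelve-element set generated by $S$ and $T$ is a finite group $G$, whence the forward $\mathcal{M}$-orbit of any state coincides with its $G$-orbit.

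Finally I would establish transitivity by one explicit orbit computation: starting from $(+,+,-;1)$ and alternately applying $S$ and $T$, one reaches all twelve tuples of \eqref{eq: set E} in a bounded number of steps. Because $G$ is a group, its orbits partition the twelve-element set, so a single orbit exhausting the set forces $G$ to act transitively; hence every state has full orbit. For a fixed branch $\mathcal{T}^{\geq {\bf w}_0}$ the collection $\mathcal{E}^{\geq {\bf w}_0}$ is the forward $\mathcal{M}$-orbit of the single tuple $E^{{\bf w}_0}$, which is therefore all twelve tuples, independently of the chosen branch. This yields both the displayed equality and the final assertion.

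I expect the main obstacle to be the bookkeeping in the middle step: one must confirm that all of $\mathcal{T}^{\geq {\bf w}_0}$ stays within the branch regime, so that only the branch-case formulas are ever invoked, and then check carefully that $S$ and $T$ are genuine bijections of the twelve-element set. The orbit computation itself is routine and can be abbreviated using the order-three relabeling symmetry $(\varepsilon_1,\varepsilon_2,\varepsilon_3;k)\mapsto(\varepsilon_3,\varepsilon_1,\varepsilon_2;\rho(k))$, with $\rho$ the cyclic permutation $1\mapsto 2\mapsto 3\mapsto 1$, which commutes with $S$ and $T$ and permutes the three blocks indexed by $k$; this reduces the verification to the four states with $k=1$.
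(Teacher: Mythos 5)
Your proposal follows essentially the same route as the paper: the inclusion ``$\subseteq$'' comes from Lemma~\ref{lemma: well definedness of the action for E}, and equality is then obtained by an explicit orbit computation for the $S,T$-dynamics on tuples, which is exactly what the paper records in Figure~\ref{fig: proof of the diheadral group}. Your extra bookkeeping (that $S$ and $T$ are bijections of the twelve-element set, hence generate a finite group, so the forward monoid orbit equals the group orbit) is a clean way to justify why a single orbit computation suffices, and your descriptions of the two generators ($S$ flips the signs at $k$ and $s$ and marks $s$; $T$ flips the sign at $t$ and marks $t$) agree with Theorem~\ref{thm: recursion for tropical signs} and Lemma~\ref{lemma: recursion for K,S,T}.

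One step would fail as literally described: ``starting from $(+,+,-;1)$ and alternately applying $S$ and $T$'' does \emph{not} reach all twelve tuples. Since $(\bar{S}\bar{T})^2=\mathrm{id}$ (Theorem~\ref{thm: group structure of M}), strict alternation closes up after four states, namely $E$, $E^{S}$, $E^{ST}$, $E^{STS}$, and then returns to $E$. The correct computation --- the one in Figure~\ref{fig: proof of the diheadral group} --- is that $T$ alone generates a $6$-cycle
$(+,+,-;1)\to(+,-,-;2)\to(+,-,+;3)\to(-,-,+;1)\to(-,+,+;2)\to(-,+,-;3)\to(+,+,-;1)$,
and a single application of $S$ jumps to the complementary $6$-cycle, so the twelve words $T^{a}$ and $ST^{a}$ with $0\leq a\leq 5$ exhaust the set. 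This is a trivial repair (your group-orbit framing makes any exhausting family of words acceptable), but as stated the transitivity step rests on a sequence of moves that only produces four of the twelve tuples.
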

\begin{proof}
By Lemma~\ref{lemma: well definedness of the action for E}, we may show that every $E^{\bf w}$ in a branch should belong to the set in (\ref{eq: set E}). Moreover, by a direct calculation, we may find all elements as exhibited in Figure~\ref{fig: proof of the diheadral group}. Therefore, the claim holds.

\begin{figure}[hpbt]
\begin{tikzpicture}
\draw (2,0) node {$(+,+,-;1)$};
\draw (2,-1) node {$(+,-,-;2)$};
\draw (2,-2) node {$(+,-,+;3)$};
\draw (2,-3) node {$(-,-,+;1)$};
\draw (2,-4) node {$(-,+,+;2)$};
\draw (2,-5) node {$(-,+,-;3)$};
\draw (-2,0) node {$(-,+,+;3)$};
\draw (-2,-1) node {$(-,+,-;1)$};
\draw (-2,-2) node {$(+,+,-;2)$};
\draw (-2,-3) node {$(+,-,-;3)$};
\draw (-2,-4) node {$(+,-,+;1)$};
\draw (-2,-5) node {$(-,-,+;2)$};
\foreach \x in{0,1,2,3,4}
    {
    \draw[<-] (-2,-0.3-\x) -- (-2,-0.5-\x) node [right] {$T$} -> (-2,-0.7-\x);
    };
\foreach \x in{0,1,2,3,4}
    {
    \draw[->] (2,-0.3-\x) -- (2,-0.5-\x) node [right] {$T$} -> (2,-0.7-\x);
    };
\foreach \x in{0,1,2,3,4,5}
    {
    \draw[<->] (-1,-\x)-- (0,-\x) node [above] {$S$}--(1,-\x);
    };
\draw[->] (3,-5) to [out=30, in=-30] (3,0);
\draw[->] (-3,0) to [out=-150, in=150] (-3,-5);
\draw (-4.5,-2.5) node {$T$};
\draw (4.5,-2.5) node {$T$};
\end{tikzpicture}
\caption{Proof of Lemma~\ref{lem: set E} and Theorem~\ref{thm: group structure of M}}\label{fig: proof of the diheadral group}
\end{figure}
\end{proof}
Let $\mathcal{E}$ be the set as (\ref{eq: set E}).
Thanks to Lemma~\ref{lemma: well definedness of the action for E}, the maps $K,S,T$ and the action defined in Definition~\ref{def: collection of signs} can be respectively seen as maps and an action on $\mathcal{E}$. That is, they are determined by the tuple $(\varepsilon_1,\varepsilon_2,\varepsilon_3;k)$ but not ${\bf w}$. Based on this fact, we introduce the maps $K,S,T:\mathcal{E} \to \{1,2,3\}$ and the action on $\mathcal{E}$. In particular, the maps $K,S,T: \mathcal{E} \to \{1,2,3\}$ can be expressed as
\begin{equation}\label{eq: map S,T in a branch}
\begin{aligned}
K(\varepsilon_1,\varepsilon_2,\varepsilon_3;k)&=k,\\
S(\varepsilon_1,\varepsilon_2,\varepsilon_3;k)&=[\textup{the index}\ j \neq k\ \textup{such that}\ \varepsilon_j \neq \varepsilon_k],\\
T(\varepsilon_1,\varepsilon_2,\varepsilon_3;k)&=[\textup{the index}\ j \neq k\ \textup{such that}\ \varepsilon_j = \varepsilon_k].
\end{aligned}
\end{equation}
Moreover, since the action of $\mathcal{M}$ on $\mathcal{T}\setminus\{\emptyset\}$ is a right monoid action, we can prove that this action on $\mathcal{E}$ is also a right monoid action.
\par
We introduce an equivalence relation $\sim$ on $\mathcal{M}$ as
\begin{equation}
X \sim Y \Longleftrightarrow E^{X}=E^{Y}\quad (\forall E \in \mathcal{E}).
\end{equation}
This relation is compatible with the monoid structure of $\mathcal{M}$ (namely, if $X \sim Y$ and $X' \sim Y'$, then $XX' \sim YY'$ holds), so we may consider its quotient monoid $\overline{\mathcal{M}}=\mathcal{M}/{\sim}$. For any $X \in \mathcal{M}$, we write its equivalent class by $\bar{X}$. In convention, we write $\mathrm{id}=\overline{1_{\mathcal{M}}}$, where $1_{\mathcal{M}}$ is the identity element of the monoid $\mathcal{M}$.
\par
By the definition of $\sim$, a faithful action of $\overline{\mathcal{M}}$ on $\mathcal{E}$ is induced as follows:
\begin{equation}
E^{\bar{X}}=E^{X} \quad(\forall E \in \mathcal{E}, X \in \mathcal{M}).
\end{equation}
The structure of $\overline{\mathcal{M}}$ is related to the well known group structure.
\begin{theorem}\label{thm: group structure of M}
The quotient monoid $\overline{\mathcal{M}}$ is a group which is isomorphic to the diheadral group $\mathcal{D}_{6}$ of the order $12$. In particular, the following relations hold:
\begin{equation}\label{eq: relations of M}
\bar{S}^2=\bar{T}^6=(\bar{S}\bar{T})^2=\mathrm{id}.
\end{equation}
\end{theorem}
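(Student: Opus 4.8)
The plan is to realize $\overline{\mathcal{M}}$ concretely as a permutation group of the twelve-element set $\mathcal{E}$ and then to match it with $\mathcal{D}_6$ through the latter's standard presentation. First I would pin down the two generators as explicit self-maps of $\mathcal{E}$. Combining the sign recursion \eqref{eq: recursion for tropical signs} of Theorem~\ref{thm: recursion for tropical signs} with the index updates in Lemma~\ref{lemma: recursion for K,S,T} and the description of $K,S,T$ on $\mathcal{E}$ given in \eqref{eq: map S,T in a branch}, one reads off the action of $S$ and $T$ on each of the twelve tuples; this is precisely the data recorded in Figure~\ref{fig: proof of the diheadral group}. The outcome I expect is that $\bar T$ acts as a single $6$-cycle on each of the two columns of that figure, while $\bar S$ acts as the involution interchanging the two columns row by row.

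From this permutation picture the three relations in \eqref{eq: relations of M} become short finite checks. Here $\bar S^2=\mathrm{id}$ since $\bar S$ is a product of the transpositions swapping paired rows of the two columns; $\bar T^6=\mathrm{id}$ since $\bar T$ restricts to a $6$-cycle on each column; and $(\bar S\bar T)^2=\mathrm{id}$ by tracing one base point, for instance $(+,+,-;1)\overset{\bar S\bar T}{\longmapsto}(-,-,+;2)\overset{\bar S\bar T}{\longmapsto}(+,+,-;1)$. Because $\bar S$ and $\bar T$ are then invertible, with $\bar S^{-1}=\bar S$ and $\bar T^{-1}=\bar T^{5}$, the monoid $\overline{\mathcal{M}}$ is generated by units and is therefore a group.

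Next I would extract the upper bound on the order. The relations $\bar S^2=\bar T^6=(\bar S\bar T)^2=\mathrm{id}$ are exactly the defining relations of $\mathcal{D}_6=\langle r,f\mid r^6=f^2=(fr)^2=1\rangle$ under the assignment $r\mapsto\bar T$, $f\mapsto\bar S$. Hence there is a surjective group homomorphism $\mathcal{D}_6\twoheadrightarrow\overline{\mathcal{M}}$; equivalently, every word in $S,T$ is equivalent to one of the twelve canonical forms $\{\bar T^{\,i},\ \bar S\bar T^{\,i}\mid 0\le i\le 5\}$, so $|\overline{\mathcal{M}}|\le 12$. For the matching lower bound I would use faithfulness together with transitivity: Figure~\ref{fig: proof of the diheadral group} shows that the powers of $\bar T$ sweep a fixed base point through one entire column and that composing with $\bar S$ reaches the other, so the $\overline{\mathcal{M}}$-orbit of any element is all of $\mathcal{E}$. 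Since $\overline{\mathcal{M}}$ is a finite group acting faithfully, the orbit--stabilizer theorem gives $|\overline{\mathcal{M}}|=|\mathcal{E}|\cdot|\mathrm{Stab}|\ge 12$.

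Combining the two bounds yields $|\overline{\mathcal{M}}|=12$, so the surjection $\mathcal{D}_6\twoheadrightarrow\overline{\mathcal{M}}$ between groups of equal finite order is an isomorphism, which proves the theorem. I expect the only genuinely delicate point to be the lower bound: one must rule out any further collapse among the twelve canonical words, and the cleanest guarantee is the transitivity argument above, which itself relies on the faithfulness of the $\overline{\mathcal{M}}$-action on $\mathcal{E}$ (Lemma~\ref{lemma: well definedness of the action for E} and Lemma~\ref{lem: set E}) established before the statement. Everything else is bookkeeping over the twelve tuples.
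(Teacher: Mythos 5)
Your proposal is correct and follows essentially the same route as the paper: the paper's entire proof is a one-line appeal to Figure~\ref{fig: proof of the diheadral group}, whose content is precisely the permutation data you extract (two disjoint $6$-cycles for $\bar{T}$, the column-swapping involution for $\bar{S}$) and then package via the standard presentation of $\mathcal{D}_6$. The scaffolding you add --- generators are units so $\overline{\mathcal{M}}$ is a group, the surjection $\mathcal{D}_6\twoheadrightarrow\overline{\mathcal{M}}$ for the upper bound, and transitivity plus orbit--stabilizer to rule out collapse onto a proper quotient such as $\mathcal{D}_3$ --- is exactly what the paper's ``directly shown'' elides, and you supply it correctly (with the one caveat that the relation $(\bar{S}\bar{T})^2=\mathrm{id}$ must be traced through all twelve tuples, not just one base point, which your description of the cycle structure makes immediate).
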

\begin{proof}
This can be directly shown by the mutation relation in Figure~\ref{fig: proof of the diheadral group}.
\end{proof}

\subsection{Fractal structure in $\varepsilon$-pattern}\

Theorem~\ref{thm: group structure of M} can be stated as a fractal phenomenon appearing in the $\varepsilon$-pattern. To state the claim, we introduce the following group action.
\begin{definition}
We introduce an right group action of $\mathfrak{S}_3$ on $\mathcal{E}$ as
\begin{equation}
(\varepsilon_1,\varepsilon_2,\varepsilon_3;k)^{\sigma} = (\varepsilon_{\sigma(1)},\varepsilon_{\sigma(2)}, \varepsilon_{\sigma(3)};\sigma^{-1}(k)).
\end{equation}
Set $\nu=\bar{T}^3$. Note that it acts on $\mathcal{E}$ as \begin{equation}
(\varepsilon_1,\varepsilon_2,\varepsilon_3;k)^{\nu}=(-\varepsilon_1,-\varepsilon_2,-\varepsilon_3;k).
\end{equation}
Since $\nu^2=\mathrm{id}$, we may consider the subgroup $\{\mathrm{id},\nu\}$ of $\overline{\mathcal{M}}$. Then, we write the group of products as $\mathfrak{S}_3^{\nu}=\{\mathrm{id},\nu\}\times\mathfrak{S}_3$.
\end{definition}
We can easily check that $(E^{\sigma})^{\nu}=(E^{\nu})^{\sigma}$ for any $\sigma \in \mathfrak{S}_3$. Thus, there is no confusion by identifying $(\mathrm{id},\sigma)=\sigma$ and writing $(\nu,\sigma)=\nu\sigma=\sigma\nu$.
\par
By a direct calculation, we have
\begin{equation}\label{eq: permutation and labeling}
M(E^{\sigma})=\sigma^{-1}(M(E)),
\quad
M(E^{\nu})=M(E)
\end{equation}
for any $E \in \mathcal{E}$, $\sigma \in \mathfrak{S}_3$ and $M=K,S,T$.
\begin{lemma}\label{lem: transitivity of permutations}
The group $\mathfrak{S}_{3}^{\nu}$ acts on $\mathcal{E}$ simply transitively. Namely, for any $E=(\varepsilon_1,\varepsilon_2,\varepsilon_3;k),E'=(\varepsilon_1',\varepsilon_2',\varepsilon_3';k') \in \mathcal{E}$, there exists a unique $\xi \in \mathfrak{S}_3^{\nu}$ such that $E'=E^{\xi}$. 
\end{lemma}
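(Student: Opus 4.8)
The plan is to exploit the cardinality coincidence $\#\mathfrak{S}_3^{\nu}=12=\#\mathcal{E}$ established in Lemma~\ref{lem: set E}. For a finite group $G$ acting on a finite set $X$ with $\#G=\#X$, the orbit--stabilizer theorem shows that the action is simply transitive if and only if it is \emph{free}, i.e. every point has trivial stabilizer: if all stabilizers are trivial then every orbit has size $\#G=\#X$, forcing a single orbit (transitivity), and conversely a single orbit of size $\#X$ together with $\#G=\#X$ forces trivial stabilizers, which is exactly the uniqueness clause. Hence I would reduce the lemma to the statement that no nontrivial $\xi\in\mathfrak{S}_3^{\nu}$ fixes any element of $\mathcal{E}$.

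First I would record that the action is well defined, i.e. that $\mathfrak{S}_3^{\nu}$ preserves $\mathcal{E}$: under $\sigma$ the label becomes $\sigma^{-1}(k)$ with sign $\varepsilon_k$ there, and $\{j:\varepsilon_{\sigma(j)}\neq\varepsilon_k\}$ has the same size as $\{m:\varepsilon_m\neq\varepsilon_k\}$, while under $\nu$ all signs flip so $\{j:\varepsilon_j\neq\varepsilon_k\}$ is unchanged; in both cases the defining condition of $\mathcal{E}$ is preserved. Then, using the commutation $(E^{\sigma})^{\nu}=(E^{\nu})^{\sigma}$ recorded just before the lemma, I write an arbitrary element in the normal form $\xi=\nu^{a}\sigma$ with $a\in\{0,1\}$ and $\sigma\in\mathfrak{S}_3$, so that
\begin{equation}
E^{\xi}=\bigl((-1)^{a}\varepsilon_{\sigma(1)},\,(-1)^{a}\varepsilon_{\sigma(2)},\,(-1)^{a}\varepsilon_{\sigma(3)};\,\sigma^{-1}(k)\bigr).
\end{equation}
Suppose $E^{\xi}=E=(\varepsilon_1,\varepsilon_2,\varepsilon_3;k)$. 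Comparing the fourth coordinate gives $\sigma(k)=k$; the $k$-th sign then yields $(-1)^{a}\varepsilon_k=\varepsilon_k$, and since $\varepsilon_k\neq 0$ this forces $a=0$. With $a=0$ and $\sigma(k)=k$, the permutation $\sigma$ is either the identity or the transposition of the two indices $s,t\neq k$. But by the defining property of $\mathcal{E}$ exactly one of $s,t$ carries a sign different from $\varepsilon_k$, so $\varepsilon_s\neq\varepsilon_t$, and the transposition would violate $\varepsilon_{\sigma(s)}=\varepsilon_s$. Hence $\sigma=\mathrm{id}$ and $\xi=\mathrm{id}$.

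The argument is essentially a short case check, so I do not expect a serious obstacle; the only points that require attention are verifying well-definedness of the action and correctly handling the $\nu$-component, which is precisely why reducing to the normal form $\nu^{a}\sigma$ (legitimate by the stated commutation relation) is convenient, as it separates the sign-flip from the index-permutation. Combining the freeness just proved with the equality $\#\mathfrak{S}_3^{\nu}=\#\mathcal{E}$ and the orbit--stabilizer reduction of the first paragraph then gives both existence and uniqueness of $\xi$, completing the proof.
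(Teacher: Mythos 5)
Your proof is correct, but it runs in the opposite direction from the paper's. The paper proves \emph{existence} constructively: it defines $\sigma$ by requiring $\sigma(M(E'))=M(E)$ for $M=K,S,T$, sets $\xi=\sigma$ or $\xi=\nu\sigma$ according to whether $\varepsilon_{k}=\varepsilon'_{k'}$, checks component by component that $E'=E^{\xi}$, and only then deduces \emph{uniqueness} from the counting $\#\mathcal{E}=\#\mathfrak{S}_3^{\nu}=12$. You instead prove uniqueness first --- freeness of the action, via the normal form $\nu^{a}\sigma$ and a short stabilizer computation --- and recover existence from the same cardinality coincidence by orbit--stabilizer. Both arguments ultimately rest on Lemma~\ref{lem: set E}, and your stabilizer computation is sound: the label comparison forces $\sigma(k)=k$, the $k$-th sign forces $a=0$, and the defining property of $\mathcal{E}$ (the two indices $s,t\neq k$ satisfy $\varepsilon_s\neq\varepsilon_t$) rules out the transposition $(s\,t)$. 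A minor benefit of your route is that you explicitly verify that $\mathfrak{S}_3^{\nu}$ preserves $\mathcal{E}$, a point the paper leaves implicit; a benefit of the paper's route is that its proof exhibits the explicit description of $\sigma$ (it matches the $K,S,T$ labels of $E$ and $E'$), which is exactly what is invoked later in the proof of Theorem~\ref{thm: fractal structure in E-pattern}(b). That said, this explicit description also follows a posteriori from the lemma statement combined with (\ref{eq: permutation and labeling}), so adopting your proof would not break anything downstream.
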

\begin{proof}
In fact, we can define $\xi\in \mathfrak{S}_3^{\nu}$ as follows:
\begin{itemize}
\item Let $\sigma \in \mathfrak{S}_3$ satisfy $\sigma(M(E'))=M(E)$ for any $M=K,S,T$.
\item If $\varepsilon_{k}=\varepsilon_{k'}'$, let $\xi = \sigma$. If $\varepsilon_{k} \neq \varepsilon_{k'}'$, let $\xi = \nu\sigma$.
\end{itemize}
By the definition of the action of $\sigma$, the $k'$th component of $E^{\sigma}$ is $\sigma^{-1}(k')=k$. This means that the $k'$th components of $E'$ and $E^{\xi}$ are the same. Let $s=S(E)$, $s'=S(E')$, $t=T(E)$, $t'=T(E')$. Since $\varepsilon_{k}=\varepsilon_t\neq\varepsilon_s$ and $\varepsilon_{k'}'=\varepsilon_{t'}' \neq \varepsilon_{s'}^{'}$, we may do the same argument for the $s'$th and the $t'$th components. Thus, we may show the existence of $\xi$. Moreover, since $\#(\mathcal{E})=\#(\mathfrak{S}_3^{\nu})=12$, this $\xi$ should be unique.
\end{proof}
\begin{theorem}[Fractal structure in $\varepsilon$-pattern]\label{thm: fractal structure in E-pattern}
The following statements hold.
\\
\textup{($a$)} The actions of $\mathfrak{S}_3^{\nu}$ and $\overline{\mathcal{M}}$ on $\mathcal{E}$ are always commutative.
\\
\textup{($b$)} Let ${\bf w}_0,{\bf w}_1$ be in a branch. Let $\xi \in \mathfrak{S}_3^{\nu}$ satisfy
\begin{equation}\label{eq: definition of xi in fractal structure theorem}
E^{{\bf w}_1}=(E^{{\bf w}_0})^{\xi}.
\end{equation}
(By Lemma~\ref{lem: transitivity of permutations}, such $\xi$ exists uniquely.)
Then, this $\xi$ induces a one-to-one correspondence between $\mathcal{E}^{\geq {\bf w}_0}$ and $\mathcal{E}^{\geq {\bf w}_1}$ by
\begin{equation}\label{eq: fractal structure theorem}
E^{{\bf w}_1X}=(E^{{\bf w}_0X})^{\xi}
\quad
(\forall X \in \mathcal{M}).
\end{equation}
Moreover, we express $\xi=\lambda\sigma$ for some $\lambda \in \{\mathrm{id},\nu\}$ and $\sigma \in \mathfrak{S}_3$. Then, for any ${\bf u} \in \mathcal{T}\setminus\mathcal{T}^{\geq [K({\bf w}_1)]}$, we have
\begin{equation}\label{eq: fractal relation}
E^{{\bf w}_1{\bf u}}=(E^{{\bf w}_0(\sigma{\bf u})})^{\xi},
\end{equation}
where $\sigma[k_1,k_2,\dots,k_r]=[\sigma(k_1),\sigma(k_2),\dots,\sigma(k_r)]$.
\end{theorem}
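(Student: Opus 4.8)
The plan is to reduce everything to the explicit combinatorial description of the $\overline{\mathcal{M}}$-action on $\mathcal{E}$ and to let part $(a)$ carry essentially all the weight. First I would record, from Theorem~\ref{thm: recursion for tropical signs}~(b) together with (\ref{eq: map S,T in a branch}), the two elementary operations on $\mathcal{E}$: writing $E=(\varepsilon_1,\varepsilon_2,\varepsilon_3;k)$ with $s=S(E)$ and $t=T(E)$, the action of $\bar{S}$ flips the signs at the coordinates $k$ and $s$ and resets the distinguished index to $s$, while $\bar{T}$ flips only the sign at $t$ and resets the distinguished index to $t$. Both operations are defined \emph{intrinsically} from the data $(k,s,t)$ and the sign comparisons that single out $s$ (sign differs from $\varepsilon_k$) and $t$ (sign equals $\varepsilon_k$).

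For $(a)$ it then suffices to check that each generator $\bar{S},\bar{T}$ commutes with each generator of $\mathfrak{S}_3^{\nu}$ (a transposition $\sigma$ and the element $\nu$), since both are monoid/group actions and commutation is preserved under composition in either variable. The key input is (\ref{eq: permutation and labeling}), which says $M(E^{\sigma})=\sigma^{-1}(M(E))$ and $M(E^{\nu})=M(E)$ for $M=K,S,T$: relabelling coordinates by $\sigma$ relabels the triple $(k,s,t)$ by $\sigma^{-1}$ and leaves it untouched under the global flip $\nu$. Comparing, on both sides, which coordinates get flipped and what the new distinguished index is, one obtains $(E^{\sigma})^{\bar{S}}=(E^{\bar{S}})^{\sigma}$ and $(E^{\nu})^{\bar{S}}=(E^{\bar{S}})^{\nu}$, and likewise for $\bar{T}$. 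This is a finite equivariance check, transparently readable off Figure~\ref{fig: proof of the diheadral group}, and it is the genuine crux of the theorem.

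Granting $(a)$, the first formula (\ref{eq: fractal structure theorem}) is immediate: using the right monoid action of Definition~\ref{def: collection of signs} and then $(a)$,
\begin{equation}
E^{{\bf w}_1 X}=(E^{{\bf w}_1})^{X}=\big((E^{{\bf w}_0})^{\xi}\big)^{X}=\big((E^{{\bf w}_0})^{X}\big)^{\xi}=(E^{{\bf w}_0 X})^{\xi}.
\end{equation}
Since $\xi$ acts as a bijection of $\mathcal{E}$, the assignment $E^{{\bf w}_0 X}\mapsto E^{{\bf w}_1 X}$ is a well-defined bijection $\mathcal{E}^{\geq {\bf w}_0}\to\mathcal{E}^{\geq {\bf w}_1}$, which is the asserted one-to-one correspondence (with $\xi$ unique by Lemma~\ref{lem: transitivity of permutations}).

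For the last formula (\ref{eq: fractal relation}) I would first extract, from $E^{{\bf w}_1}=(E^{{\bf w}_0})^{\lambda\sigma}$ and (\ref{eq: permutation and labeling}), the index identities $M({\bf w}_1)=\sigma^{-1}(M({\bf w}_0))$ for $M=K,S,T$; in particular $\sigma(K({\bf w}_1))=K({\bf w}_0)$, so the hypothesis ${\bf u}\notin\mathcal{T}^{\geq[K({\bf w}_1)]}$ forces $\sigma{\bf u}\notin\mathcal{T}^{\geq[K({\bf w}_0)]}$ and hence both ${\bf w}_1{\bf u}$ and ${\bf w}_0(\sigma{\bf u})$ are genuine non-cancelling extensions. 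Then I would induct on $r=|{\bf u}|$, the base case $r=0$ being exactly (\ref{eq: definition of xi in fractal structure theorem}). For the step, write ${\bf u}=[k_1]{\bf u}'$; since $k_1\neq K({\bf w}_1)$ we have $k_1\in\{S({\bf w}_1),T({\bf w}_1)\}$, so ${\bf w}_1[k_1]={\bf w}_1M$ for the corresponding letter $M\in\{S,T\}$, and the index identities give $\sigma(k_1)=M({\bf w}_0)$, whence ${\bf w}_0[\sigma(k_1)]={\bf w}_0M$. Applying the already-proved (\ref{eq: fractal structure theorem}) with $X=M$ shows that $({\bf w}_0M,{\bf w}_1M)$ is again a pair of branches related by the same $\xi$; moreover $K({\bf w}_1M)=k_1$, and reducedness of ${\bf u}$ forces the first index of ${\bf u}'$ to differ from $k_1$. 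The inductive hypothesis applied to ${\bf u}'$ then gives $E^{({\bf w}_1M){\bf u}'}=(E^{({\bf w}_0M)(\sigma{\bf u}')})^{\xi}$, and recognising $({\bf w}_1M){\bf u}'={\bf w}_1{\bf u}$ and $({\bf w}_0M)(\sigma{\bf u}')={\bf w}_0(\sigma{\bf u})$ closes the induction. The only points requiring care in $(b)$ are bookkeeping: that one never leaves the branch regime (so that the $E$-data alone determines the mutation, per Lemma~\ref{lemma: well definedness of the action for E}) and that the non-cancellation and reducedness conditions propagate, both of which follow from $K({\bf w}_1M)=k_1$ and from ${\bf w}_0,{\bf w}_1$ lying in branches.
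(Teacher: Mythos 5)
Your proposal is correct and follows essentially the same route as the paper: part $(a)$ via generator-by-generator commutation checks using the label-equivariance $M(E^{\sigma})=\sigma^{-1}(M(E))$, $M(E^{\nu})=M(E)$, the first half of $(b)$ by the identical three-line computation from $(a)$, and the relation \eqref{eq: fractal relation} by tracking the letters of ${\bf u}$ against those of $\sigma{\bf u}$ through \eqref{eq: permutation and labeling}. Your induction on $|{\bf u}|$, which re-applies the theorem to the pair $({\bf w}_0M,{\bf w}_1M)$ related by the same (unique) $\xi$, is just a cleaner packaging of the paper's ``by repeating this argument'' step, with the bookkeeping (non-cancellation, staying in a branch, reducedness propagating) handled correctly.
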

\begin{proof}
($a$) Since all actions are compatible with their group structure, it suffices to show the equality $(E^{\xi})^{\bar{X}}=(E^{\bar{X}})^{\xi}$ ($\forall E \in \mathcal{E}$) for all generators $\xi \in \mathfrak{S}_3^{\nu}$ and $\bar{X} \in \overline{\mathcal{M}}$. In particular, it suffices to show the case of $(\xi,\bar{X})=(\nu,\bar{S})$, $(\nu,\bar{T})$, $(\sigma,\bar{S})$, and $(\sigma,\bar{T})$, where $\sigma \in \mathfrak{S}_3$. When $\xi=\nu$, we can check it by a direct calculation. Consider the case of $\xi=\sigma \in \mathfrak{S}_3$ and $\bar{X}=\bar{S}$. Set $E=(\varepsilon_1,\varepsilon_2,\varepsilon_3;k)$ and $s=S(E)$, $t=T(E)$.
By (\ref{eq: recursion of S mutation for K,S,T}) and (\ref{eq: permutation and labeling}), we have
\begin{equation}
\begin{aligned}
(K((E^{\sigma})^{\bar{S}}),S((E^{\sigma})^{\bar{S}}),T((E^{\sigma})^{\bar{S}}))&\overset{(\ref{eq: recursion of S mutation for K,S,T})}{=}(K((E^{\bar{S}})^{\sigma}),S((E^{\bar{S}})^{\sigma}),T((E^{\bar{S}})^{\sigma}))\\
&\overset{(\ref{eq: permutation and labeling})}{=}(\sigma^{-1}(s),\sigma^{-1}(k),\sigma^{-1}(t)).
\end{aligned}
\end{equation}
Moreover the $\sigma^{-1}(k)$th components of both $(E^{\sigma})^{\bar{S}}$ and $(E^{\bar{S}})^{\sigma}$ are the same as $-\varepsilon_{k}$. By the second condition of (\ref{eq: identification of trunks and branches}), we can easily check that all components are the same. Thus, the equality $(E^{\sigma})^{\bar{S}}=(E^{\bar{S}})^{\sigma}$ holds. We may show $(E^{\sigma})^{\bar{T}}=(E^{\bar{T}})^{\sigma}$ by a similar argument.
\\
($b$) By the commutativity of ($a$), we can show (\ref{eq: fractal structure theorem}) as follows:
\begin{equation}
(E^{{\bf w}_1})^{X}\overset{(\ref{eq: definition of xi in fractal structure theorem})}{=}((E^{{\bf w}_0})^{\xi})^{X}\overset{(a)}{=}((E^{{\bf w}_0})^{X})^{\xi}=(E^{{\bf w}_0X})^{\xi}.
\end{equation}
Next, we show (\ref{eq: fractal relation}). Since ${\bf u} \in \mathcal{T}\setminus\mathcal{T}^{\geq [K({\bf w}_1)]}$, we may express ${\bf w}_1{\bf u}={\bf w}_1X$ for some $X \in \mathcal{M}$. By (\ref{eq: fractal structure theorem}), we have $E^{{\bf w}_1{\bf u}}=(E^{{\bf w}_0X})^{\xi}$. Thus, it suffices to show $E^{{\bf w}_0X}=E^{{\bf w}_0(\sigma{\bf u})}$. Set $X=M_1M_2\cdots M_r$ and ${\bf u}=[k_1,k_2,\dots,k_r]$. Since $E^{{\bf w}_1{\bf u}}=E^{{\bf w}_1X}$ holds, they satisfy $k_{i}=M_{i}(E^{{\bf w}_1[k_1,k_2,\dots,k_{i-1}]})$ for any $i=1,2,\dots,r$. By Lemma~\ref{lem: transitivity of permutations}, $\sigma$ is determined by $\sigma(M(E^{{\bf w}_1}))=M(E^{{\bf w}_0})$ for any $M=K,S,T$. In particular, we have $M_1(E^{{\bf w}_0})=\sigma(M_1(E^{{\bf w}_1}))=\sigma(k_1)$.  Moreover, we have $k_2=M_2(E^{{\bf w}_1[k_1]})=M_2(E^{{\bf w}_1M_1})=M_2((E^{{\bf w}_0M_1})^{\xi})=\sigma^{-1}(M_2(E^{{\bf w}_0M_1}))=\sigma^{-1}(M_2(E^{{\bf w}_0[\sigma(k_1)]}))$, where the third equality follows from ($a$) and the fourth equality follows from (\ref{eq: permutation and labeling}). This implies $M_2(E^{{\bf w}_0[\sigma(k_1)]})=\sigma(k_2)$. By repeating this argument, we obtain $M_{i}(E^{{\bf w}_0(\sigma[k_1,k_2,\dots,k_{i-1}])})=\sigma(k_i)$, and it implies that $E^{{\bf w}_0(\sigma{\bf u})}=E^{{\bf w}_0X}$ as we desired.
\end{proof}

\begin{example}
Take one ${\bf w}_0$ such that $E^{{\bf w}_0}=(+,+,-;1)$. Then, by obeying the rule (\ref{eq: map S,T in a branch}) and (\ref{eq: recursion for tropical signs}), we may calculate $\mathcal{E}^{\geq {\bf w}_0}$ as in Figure~\ref{fig: E0}. Let ${\bf w}_1={\bf w}_0T$. Then, we can calculate $\mathcal{E}^{\geq {\bf w}_1} \subset \mathcal{E}^{\geq {\bf w}_0}$ as in Figure~\ref{fig: E1}. Here, we put the $S$-mutated tuple at the below left, and put the $T$-mutated tuple at the below right. Then, these two  collections $\mathcal{E}^{\geq {\bf w}_1} \subset \mathcal{E}^{\geq {\bf w}_0}$ can be connected by an action $\xi=\nu\sigma$, where $\sigma=(1,3,2) \in \mathfrak{S}_3$ is a cyclic permutation. Note that, by (\ref{eq: fractal relation}), each edge labeled by a number $i$ in $\mathcal{E}^{\geq {\bf w}_0}$ is sent to the edge in $\mathcal{E}^{\geq {\bf w}_1}$ labeled by $\sigma^{-1}(i)$.
\begin{figure}[hbtp]
\centering
\begin{minipage}{0.49\linewidth}
\centering
\begin{tikzpicture}
\draw (0,0) node [draw, rectangle] {$(+,+,-;1)$};
\draw (1.5,0) node {${\bf w}_0$};
\draw (-2,-1.5) node {$(-,+,+;3)$};
\draw (2,-1.5) node [draw, rectangle] {$(+,-,-;2)$};
\draw (3.5,-1.5) node {${\bf w}_1$};
\draw (-3,-3) node {$(+,+,-;1)$};
\draw (-1,-3) node {$(-,-,+;2)$};
\draw (1,-3) node {$(-,+,-;1)$};
\draw (3,-3) node {$(+,-,+;3)$};
\draw[->] (-0.5,-0.25)--(-1,-0.75) node [above left] {$S=3$}->(-1.5,-1.25);
\draw[->] (0.5,-0.25)--(1,-0.75) node [above right] {$T=2$}->(1.5,-1.25);
\draw[->] (-2.25,-1.75)--(-2.5,-2.25) node [left] {$S=1$}->(-2.75,-2.75);
\draw[->] (-1.25,-1.75)->(-0.75,-2.75);
\draw (-0.9,-2.2) node [above right] {$T=2$};
\draw[->] (1.25,-1.75)->(0.75,-2.75);
\draw (0.9,-2.1) node [below left] {$S=1$};
\draw[->] (2.25,-1.75)--(2.5,-2.25) node [right] {$T=3$}->(2.75,-2.75);
\end{tikzpicture}
\caption{$\mathcal{E}^{\geq {\bf w}_0}$}\label{fig: E0}
\end{minipage}
\begin{minipage}{0.49\linewidth}
\centering
\begin{tikzpicture}
\draw (0,0) node [draw, rectangle] {$(+,-,-;2)$};
\draw (1.5,0) node {${\bf w}_1$};
\draw (-2,-1.5) node {$(-,+,-;1)$};
\draw (2,-1.5) node {$(+,-,+;3)$};
\draw (-3,-3) node {$(+,-,+;2)$};
\draw (-1,-3) node {$(-,+,+;3)$};
\draw (1,-3) node {$(+,+,-;2)$};
\draw (3,-3) node {$(-,-,+;1)$};
\draw[->] (-0.5,-0.25)--(-1,-0.75) node [above left] {$S=1$}->(-1.5,-1.25);
\draw[->] (0.5,-0.25)--(1,-0.75) node [above right] {$T=3$}->(1.5,-1.25);
\draw[->] (-2.25,-1.75)--(-2.5,-2.25) node [left] {$S=2$}->(-2.75,-2.75);
\draw[->] (-1.25,-1.75)->(-0.75,-2.75);
\draw (-0.9,-2.2) node [above right] {$T=3$};
\draw[->] (1.25,-1.75)->(0.75,-2.75);
\draw (0.9,-2.1) node [below left] {$S=2$};
\draw[->] (2.25,-1.75)--(2.5,-2.25) node [right] {$T=1$}->(2.75,-2.75);
\end{tikzpicture}
\caption{$\mathcal{E}^{\geq {\bf w}_1}$}\label{fig: E1}
\end{minipage}
\end{figure}
\end{example}

\section{A geometric characterization via polygons} \label{geometric characterization}
In this section, we aim to give a geometric model of tropical signs, which will be beneficial to the calculation. Moreover, we equip the dihedral group $\mcD_6$ with a cluster realization.
\subsection{A geometric model of the tropical signs}\

 Firstly, we label all the $12$ tropical signs as follows:
 \begin{equation}
 	\begin{array}{cc}
 		A_1=(+,+,-;1), B_1=(+,-,-;2), C_1=(+,-,+;3),\\
 		D_1=(-,-,+;1), E_1=(-,+,+;2), F_1=(-,+,-;3),\\
 		A_2=(-,+,+;3), B_2=(-,+,-;1), C_2=(+,+,-;2),\\
 		D_2=(+,-,-;3), E_2=(+,-,+;1), F_2=(-,-,+;2).
 	\end{array} 
 \end{equation} Then, we have $B_1=\bar{T}(A_1),\ C_1=\bar{T}(B_1),\ D_1=\bar{T}(C_1),\ E_1=\bar{T}(D_1),\ F_1=\bar{T}(E_1),\ A_1=\bar{T}(F_1).$ Similarly, we also have the relation from $A_2$ to $F_2$. 
 
 In addition, we can check that $\bar{S}(A_1)=A_2,\ \bar{S}(B_1)=B_2,\ \bar{S}(C_1)=C_2,\ \bar{S}(D_1)=D_2,\ \bar{S}(E_1)=E_2,\ \bar{S}(F_1)=F_2.$ Note that $\bar{S}$ is an involution. Given a $12$-gon, we mark its $6$ vertices evenly by $\{A_1,B_1,C_1,D_1,E_1,F_1\}$, such that each pair of adjacent vertices differs by a $60$-degree rotation $\bar{T}$. Then, we choose a line of symmetry $S$ that does not pass through any vertex arbitrarily. We can refer to \Cref{geo model graph} as an example. Then, according to the reflection of $S$, we mark the left $6$ vertices evenly by $\{A_2,B_2,C_2,D_2,E_2,F_2\}$ such that the pairs $\{A_1,A_2\}$, $\{B_1,B_2\}$, $\{C_1,C_2\}$, $\{D_1,D_2\}$, $\{E_1,E_2\}$ and $\{F_1,F_2\}$ are symmetric with respect to $S$. It can be checked that all the vertices in the $12$-gon are compatible with the $60$-degree counterclockwise rotation $\bar{T}$ and the reflection $\bar{S}$ with respect to $S$.
 
 By constructing such geometric model of tropical signs, we can directly read the group actions of $\overline{\mcM}$ over the tropical signs in the geometric model instead of complicated calculation. 
 \begin{example}
 	In \Cref{geo model graph}, we assume that $E^{\mfw}=A_1=(+,+,-;1)$ for some reduced sequence $\mfw$. Take an element $X=\bar{T}^2\bar{S}\bar{T}\in \overline{\mcM}$ and we can directly obtain that $E^{\mfw X}=B_2=(-,+,-;1)$ via the composition of a $120$-degree counterclockwise rotation, a reflection and a $60$-degree counterclockwise rotation in turn.
 \end{example}
 \subsection{A cluster realization of the dihedral group $\mathcal{D}_{6}$}\
 
 Now, we aim to give a cluster realization of the dihedral group $\mathcal{D}_{6}$ based on the geometric model as above. Without loss of generality, we focus on \Cref{geo model graph} as the example. We glue the pair of adjacent vertices together as follows:
 \begin{align}
 	\{A_1,D_2\}, \{B_1,C_2\}, \{C_1,B_2\}, \{D_1,A_2\}, \{E_1,F_2\}, \{F_1,E_2\}. 
 \end{align} That is to say, we regard the two different vertices in each pair as the same one. Then, the $12$-gon reduces to the $6$-gon, see \Cref{cluster real}. We can directly check that $\bar{T}$ corresponds to the $60$-degree counterclockwise rotation and $\bar{S}$ corresponds to the reflection with respect to $S$. Note that $\bar{T}^6=\bar{S}^2=(\bar{S}\bar{T})^2=\id$, which implies that it is a presentation of the dihedral group $\mcD_6$. Since $\bar{T}$ and $\bar{S}$ are the cluster mutations of the tropical signs. Hence, we give a cluster realization of $\mcD_6$.

\begin{figure}[htbp]
\centering
\begin{tikzpicture}

\def\r{3}
\begin{scope}[rotate=-15]

\coordinate (A) at ({\r*cos(90)}, {\r*sin(90)});
\coordinate (B) at ({\r*cos(60)}, {\r*sin(60)});
\coordinate (C) at ({\r*cos(30)}, {\r*sin(30)});
\coordinate (D) at ({\r*cos(0)}, {\r*sin(0)});
\coordinate (E) at ({\r*cos(-30)}, {\r*sin(-30)});
\coordinate (F) at ({\r*cos(-60)}, {\r*sin(-60)});
\coordinate (G) at ({\r*cos(-90)}, {\r*sin(-90)});
\coordinate (H) at ({\r*cos(-120)}, {\r*sin(-120)});
\coordinate (I) at ({\r*cos(-150)}, {\r*sin(-150)});
\coordinate (J) at ({\r*cos(180)}, {\r*sin(180)});
\coordinate (K) at ({\r*cos(150)}, {\r*sin(150)});
\coordinate (L) at ({\r*cos(120)}, {\r*sin(120)});

\draw[gray, thick] (A)--(B)--(C)--(D)--(E)--(F)--(G)--(H)--(I)--(J)--(K)--(L)--cycle;

\foreach \p in {A,B,C,D,E,F,G,H,I,J,K,L} {
    \filldraw[black] (\p) circle (2pt);
}

\node at ({1.2*\r*cos(90)}, {1.2*\r*sin(90)}) {$A_1$};
\node at ({1.2*\r*cos(60)}, {1.2*\r*sin(60)}) {$\textcolor{red}{E_2}$};
\node at ({1.2*\r*cos(30)}, {1.2*\r*sin(30)}) {$F_1$};
\node at ({1.2*\r*cos(0)}, {1.2*\r*sin(0)}) {$\textcolor{red}{F_2}$};
\node at ({1.2*\r*cos(-30)}, {1.2*\r*sin(-30)}) {$E_1$};
\node at ({1.2*\r*cos(-60)}, {1.2*\r*sin(-60)}) {$\textcolor{red}{A_2}$};
\node at ({1.2*\r*cos(-90)}, {1.2*\r*sin(-90)}) {$D_1$};
\node at ({1.2*\r*cos(-120)}, {1.2*\r*sin(-120)}) {$\textcolor{red}{B_2}$};
\node at ({1.2*\r*cos(-150)}, {1.2*\r*sin(-150)}) {$C_1$};
\node at ({1.2*\r*cos(180)}, {1.2*\r*sin(180)}) {$\textcolor{red}{C_2}$};
\node at ({1.2*\r*cos(150)}, {1.2*\r*sin(150)}) {$B_1$};
\node at ({1.2*\r*cos(120)}, {1.2*\r*sin(120)}) {$\textcolor{red}{D_2}$};

\draw[blue, thick, dashed] 
    ({1.5*\r*cos(15)}, {1.5*\r*sin(15)}) -- 
    ({-1.5*\r*cos(15)}, {-1.5*\r*sin(15)});
\node[blue] at ({1.6*\r*cos(15)}, {1.6*\r*sin(15)}) {$S$};
\draw[-{Latex}, blue, thick, dashed] 
    ({0.65*\r*cos(45)}, {0.65*\r*sin(45)}) arc[start angle=45, end angle=105, radius=0.65*\r];
    \node at ({0.78*\r*cos(75)}, {0.78*\r*sin(75)}) {$\textcolor{blue}{T}$};
\end{scope}
\end{tikzpicture}
\caption{Geometric model of tropical signs via $12$-gon}
\label{geo model graph}
\end{figure}

\begin{figure}[htbp]
\centering
\begin{tikzpicture}[scale=2]
\begin{scope}[rotate=30]

\foreach \i in {1,...,6} {
  \coordinate (P\i) at ({cos(60*(\i-1))}, {sin(60*(\i-1))});
}

\foreach \i/\label in {1/$F_1(\textcolor{red}{E_2})$, 2/$A_1(\textcolor{red}{D_2})$, 3/$B_1(\textcolor{red}{C_2})$, 4/$C_1(\textcolor{red}{B_2})$, 5/$D_1(\textcolor{red}{A_2})$, 6/$E_1(\textcolor{red}{F_2})$} {
  \fill (P\i) circle (1.5pt); 
  \node at ($(0,0)!1.4!(P\i)$) {\label};
}

\foreach \i in {1,...,6} {
  \pgfmathtruncatemacro{\j}{mod(\i,6)+1}
  \draw[thick] (P\i) -- (P\j);
}

\draw[blue, dashed, thick] 
  ({sqrt(3)/2*1.6}, {-0.5*1.6}) -- ({-sqrt(3)/2*1.6}, {0.5*1.6});

\node[blue] at ({sqrt(3)/2*1.8 }, {-0.5*1.8}) {$\overline{S}$};

\draw[-{Latex}, blue, thick, dashed]
  ([shift={(0.15,0.1)}]-5:0.5) arc[start angle=0, end angle=74, radius=0.45];
  \node at ([shift={(-0.1,0.18)}]0:0.45) {$\textcolor{blue}{\overline{T}}$};
 \end{scope}
\end{tikzpicture}
\caption{Cluster realization of the dihedral group $\mathcal{D}_{6}$}
\label{cluster real}
\end{figure}

\subsection*{Acknowledgements} 
 The authors would like to sincerely thank Tomoki Nakanishi for his thoughtful guidance. The authors are also grateful to Peigen Cao, Yasuaki Gyoda, Kyungyong Lee, Salvatore Stella and Toshiya Yurikusa for their valuable discussions and insightful suggestions. In addition, Z. Chen  wants to thank Xiaowu Chen, Zhe Sun and Yu Ye for their help and support. R. Akagi is supported by JSPS KAKENHI Grant Number JP25KJ1438 and Chubei Itoh Foundation. Z. Chen is supported by the China Scholarship Council (Grant No. 202406340022) and National Natural Science Foundation of China (Grant No. 124B2003).
 \newpage


\end{document}